\newcommand{\abs}[1]{\left|#1\right|}
\newcommand{\eps}{\varepsilon}
\renewcommand{\d}{\mathrm{d}}
\renewcommand{\P}{\mathbb{P}}
\newcommand{\E}{\mathbb{E}}
\newcommand{\Ind}[1]{\mathds{1}_{#1}}
\newcommand{\Om}{\Omega}
\newcommand{\vphi}{\varphi}
\newcommand{\epsi}{{\eps_1}}
\newcommand{\epsii}{{\eps_2}}
\newcommand{\tmu}{\tilde{\mu}}
\newcommand{\mc}[1]{\mathcal{#1}}
\newcommand{\cC}{\mc{C}}
\newcommand{\cD}{\mc{D}}
\newcommand{\cF}{\mc{F}}
\newcommand{\cM}{\mc{M}}
\newcommand{\cO}{\mc{O}}
\newcommand{\norm}[1]{\left\lVert #1\right\rVert}
\renewcommand{\sp}[2]{\left\langle #1,#2 \right\rangle}
\newcommand{\tvnorm}[1]{\norm{#1}_{TV}}
\newcommand{\Hsob}{{H^1_0}}
\newcommand{\Hd}{{H^{-1}}}
\newcommand{\dup}[4]{{}_{#1}{\left\langle #2, #3 \right\rangle}_{#4}}
\newcommand{\tow}{\rightharpoonup}
\newcommand{\tows}{\overset{\ast}{\rightharpoonup}}
\newcommand{\N}{\mathbb{N}}
\newcommand{\R}{\mathbb{R}}
\newcommand{\supp}{\mathrm{supp}}
\newcommand{\ie}{i.\,e.\,}
\newcommand{\eg}{e.\,g.\,}
\newcommand{\cf}{cf.\ }
\newcommand{\ext}[1]{#1_\mathrm{ext}}
\newcommand{\dist}{\mathrm{dist}}
\newtheorem{thm}{Theorem}[section]
\newtheorem{prop}[thm]{Proposition}
\newtheorem{lem}[thm]{Lemma}
\newtheorem{Cor}[thm]{Corollary}
\theoremstyle{remark}
\newtheorem{Rem}[thm]{Remark}
\theoremstyle{definition}
\newtheorem{Not}[thm]{Notations}
\newtheorem{Def}[thm]{Definition}
\newtheorem{Ass}[thm]{Assumptions}
\newcommand{\sgn}{\mathrm{sgn}}
\newcommand{\psiexh}{\xi}
\newcommand{\psirep}{\psi}
\numberwithin{equation}{section}
\newcommand{\confn}{\beta}
\DeclareRobustCommand\widecheck[1]{{\mathpalette\@widecheck{#1}}}
\def\@widecheck#1#2{%
    \setbox\z@\hbox{\m@th$#1#2$}%
    \setbox\tw@\hbox{\m@th$#1%
       \widehat{%
          \vrule\@width\z@\@height\ht\z@
          \vrule\@height\z@\@width\wd\z@}$}%
    \dp\tw@-\ht\z@
    \@tempdima\ht\z@ \advance\@tempdima2\ht\tw@ \divide\@tempdima\thr@@
    \setbox\tw@\hbox{%
       \raise\@tempdima\hbox{\scalebox{1}[-1]{\lower\@tempdima\box
\tw@}}}%
    {\ooalign{\box\tw@ \cr \box\z@}}}
\title{Well-posedness of SVI solutions to singular-degenerate stochastic
  porous media equations arising
  in self-organised criticality}
\author{Marius Neuß}
\date{}
\begin{document}
\maketitle
\begin{abstract}
  We consider a class of generalised stochastic porous media equations with
  multiplicative Lip\-schitz continuous noise. These equations can be related
  to physical models exhibiting self-organised criticality. We show that
  these SPDEs have unique SVI solutions which depend continuously on the
  initial value. In order to formulate this notion of solution and to
  prove uniqueness in the case of a slowly growing nonlinearity, the
  arising energy functional is analysed in detail.

  \noindent \textsc{Keywords}: singular-degenerate SPDE, stochastic
  variational inequalities, generalised porous media, self-organised
  criticality

  \noindent \textsc{MSC 2010}: 60H15, 76S05
\end{abstract}

\section{Introduction}
We consider a class of singular-degenerate generalised stochastic porous
media equations
\begin{align} \label{eq:1}
  \begin{split}
    \d X_{t}&\in\ \Delta\left(\phi(X_{t})\right)\d t+B(t,X_{t})\d W_{t},\\
    X_0 &=\  x_0,
  \end{split}
\end{align}
on a bounded, smooth domain $\cO\subseteq \R^d$ with zero Dirichlet
boundary conditions and $x_0\in\Hd$, where $\Hd$ is the dual of
$\Hsob(\cO)$. In the following, $W$ is a cylindrical Wiener process
on some separable Hilbert space $U$, and the diffusion coefficients
$B: [0,T]\times \Hd\times \Om \to L_2(U,\Hd)$ take values in the space of
Hilbert-Schmidt operators $L_2(U,\Hd)$. The nonlinearity $\phi: \R\to 2^\R$ is the subdifferential of
a convex lower-semicontinuous symmetric function $\psirep: \R\to\R$ (sometimes
called ``potential''), which grows at least linearly and at most
quadratically for $\abs{x}\to\infty$. As paradigmatic examples, we mention
the maximal monotone extensions of
\begin{equation}\label{eq:55}
  \phi_1(x) = \sgn(x)\left(1 - \Ind{(-1,1)}(x)\right) \quad\text{and}\quad \phi_2(x) = x\left(1 - \Ind{(-1,1)}(x)\right),
\end{equation}
which are encountered in the context of self-organised criticality. Indeed,
equation \eqref{eq:1} with the first nonlinearity in \eqref{eq:55} is
related to a particle model which was first introduced by Bak, Tang and
Wiesenfeld in their celebrated works \cite{BTW} and \cite{BTW88}. We refer
to Section \ref{sec:Motivation} below for details and references.

The main merits of this article are as follows. First, we give a meaning to
\eqref{eq:1} by defining a suitable notion of solution and proving the
existence and uniqueness of such solutions. Second, we extend the
applicability of the framework of SVI solutions, which features several
properties which are desirable independently of the specific equation
presented above. For instance, it applies to stochastic partial
differential equations (SPDE) with a very general nonlinear
drift term, and solutions for general initial data can be identified by
means of the equation and not only in a limiting sense.

We briefly outline the strategy that we are going to apply. First, we rewrite
\eqref{eq:1} into the form
\begin{equation}\label{eq:91}
  \d X_t \in -\partial\vphi(X_t)\,\d t + B(t, X_t)\,\d W_t,
\end{equation}
which incorporates the multivalued function $\phi$ into an energy
functional $\vphi: \Hd \to [0,\infty]$. For example, in case of the nonlinearity
$\phi_1$ in \eqref{eq:53}, we define
\begin{equation}\label{eq:90}
  \vphi(u)=
    \begin{cases}
      \norm{\psirep(u)}_{TV},\quad & \text{if }u\text{ is a finite Radon
        measure on }\cO,\\
      +\infty, & \text{else,}
    \end{cases}
\end{equation}
where $\psi$ is the anti-derivative of $\phi$, \ie $\partial\psi =
\phi$, with $\psi(0) = 0$. For the precise definition of a convex function of a measure, we
refer to Section \ref{sec:preparatory} below. We then derive a stochastic
variational inequality (SVI) from \eqref{eq:91} and define a corresponding
notion of solution, see Definition \ref{Def-SVI} below. In order to
construct such a solution we first show that $\vphi$ as defined above is
lower-semicontinuous, which then allows to show the convergence of an
approximating sequence gained by a Yosida approximation of the nonlinearity
and the addition of a viscosity term. Furthermore, in the proof of
uniqueness, it is crucial to show that $\vphi$ can be well approximated by
its values on $L^2$, which we ensure by showing that it coincides with the
lower-semicontinuous hull of $\vphi|_{L^2}$ in $\Hd$. To this end, we will
construct approximating sequences by an interplay of mollification and
shifts, inspired by the construction of \cite[Lemma A6.7]{Alt}. This
constitutes one technical focus of this work.

The structure of this article is as follows: In the subsequent sections of
the introduction, we will give a brief overview on the mathematical
literature concerning the solution theory of generalised stochastic porous
media equations, and we will point out how equation \eqref{eq:1} is
motivated by the physics literature. In Section \ref{sec:requirements} we
state the precise assumptions and formulate the first main result of this
article, in which the well-posedness of Equation \eqref{eq:1} is
established (see Theorem \ref{main-thm} below). We prove the
lower-semicontinuity of the abovementioned energy functional $\vphi$ and
the property of $\vphi$ being the lower-semicontinuous hull of
$\vphi|_{L^2}$ in $\Hd$ in Section \ref{sec:preparatory}, the latter of
which is the second main result (see Theorem \ref{approx-thm} below). In
Section \ref{sec:proof-existence}, the well-posedness result will be
proved, following the arguments of \cite[Section 2]{G-R}.

\subsection{Mathematical Literature}
In the recent decades, stochastic porous media equations have been very
present in the mathematical literature. For the original case
\begin{equation}\label{eq:60}
  \d X_{t} = \Delta \phi(X_{t})\d t+B(t,X_{t})\d W_{t},
\end{equation}
where $\phi(r) = r^{[m]} := \abs{r}^{m-1}r$ for $r\in\R$ and $m\geq 1$
($m=1$ representing the stochastic heat equation), a concisely summarised
well-posedness analysis can be found in \cite{Roeckner},
which goes back to the work of Krylov and Rozovskii \cite{Krylov-Roz} and
Pardoux \cite{Pardoux}. In \cite{RRW}, the theory is extended to the fast
diffusion case $m\in(0,1)$, and other nonlinear functions $\phi$ are
considered. A setting with a more general monotone and differentiable
nonlinearity is considered in \cite{BDPrR-existence-nonneg}.

A severe additional difficulty arises when one considers the limit case
$m=0$, in which $\phi$ becomes multivalued. The first articles treating this
type of porous medium equations, \cite{BDPrR-existence-strong} and
\cite{BDPrR-SPMEandSOC}, either require $\phi$ to be surjective or more
restrictions on the initial state or the noise. In \cite{G-T-multivalued},
the $m=0$ limit of \eqref{eq:60} can be treated, but one has to restrict
to more regular initial data or to the concept of limiting solutions. For
general initial conditions, this notion of solution contains no
characterisation in terms of the equation, which is often necessary for
further work such as stability results (see \eg \cite{G-T-stability}).

In \cite{BDPrR-Diffusivity} and later in \cite{B-R-SVITVF,G-R-plaplace},
the concept of stochastic variational inequalities (SVIs) and a
corresponding notion of solution have been used to overcome these
issues. We note that in \cite{G-R-plaplace}, an identification of a
functional as a lower-semicontinuous hull was needed in the context of
$p$-Laplace type equations with a $C^2$ potential, going back to results
from \cite{Anzellotti85,Ferro}. In \cite{G-R}, the existence and uniqueness
of SVI solutions was proven for the $m=0$ limit of \eqref{eq:60}, for which
a refinement of previous methods became necessary, because the naive choice
for the energy functional does not lead to an energy space with adequate
compactness properties. The arising difficulties when setting up the energy
functional are similar to the ones mentioned above for $\vphi$ from
\eqref{eq:90}. They have been overcome in \cite{G-R} by using the specific
shape of the nonlinearity, which allows to set the energy functional to
\begin{displaymath}
  \vphi(u)=
    \begin{cases}
      \norm{u}_{TV},\quad & \text{if }u\text{ is a finite Radon
        measure on }\cO,\\
      +\infty, & \text{else}
    \end{cases}
\end{displaymath}
for $u\in\Hd$, which then allows to use structural properties of the TV
norm. With more regularity or structural assumptions on the noise and/or
the initial state, more regularity for SVI solutions or the existence of
strong solutions can be proved, as \eg in
\cite{G-R,G-R-plaplace,B-R-SVITVF,Gess-strongsolns}. For the regularisation
by noise of quasi-linear SPDE with possibly singular drift terms, we also
mention the works \cite{Gassiat-Gess,Gyongy-Pardoux}.

We next mention several different approaches to stochastic porous media
equations. The article \cite{BRR-Rd} considers the equation on an unbounded
domain, the works \cite{Barbu-Bogachev, DPrR-weaksolns} use an approach via
Kolmogorov equations. In \cite{BR-operatorial}, an operatorial approach to
SPDE is introduced which can be applied to generalised stochastic porous
media equations with continuous nonlinearities. In \cite{G-H,Deb-Hof-Vov} and
\cite{DGG}, stochastic porous media equations are solved in the sense of kinetic
or entropy solutions, respectively. Previous works in those directions are,
e.\,g., \cite{Bauzet-Vallet-Wittbold15,Deb-Vovelle} and \cite{Biswas-Majee,Feng-Nualart,Kim03}. \cite{GS17} makes use of a rough
path approach leading to pathwise rough kinetic/entropy solutions and
including regularity results, with
\cite{Friz-Gess,LPS13} as some of the related preceding works.

Regarding the construction and analysis of the energy functional arising in
the context of SVIs, we rely on techniques from
\cite{Temam-Demengel,Temam-book} on convex functionals of Radon
measures. For the deterministic theory on porous medium equations, we refer
to \cite{OttoPME} and \cite{VazquezPME}. Regarding results on the long-time
behaviour of singular-degenerate SPDE, see \eg \cite{FGS17,Gess-RAsingular}
for the existence of random attractors,
\cite{G-T-ergodicity,G-D-T,Neuss-ergodicity} for ergodicity and
\cite{Gess-FTE,BDPrR-FTE} for finite-time extinction in the case of purely
multiplicative noise.

\subsection{Self-organised criticality (SOC)} \label{sec:Motivation}

The model (1.1) can to some extent be associated with processes exhibiting
self-organised criticality (SOC). This concept postulates that many
randomly driven processes featuring a critical threshold, at which
relaxation events are triggered, possess a non-equilibrium statistical
invariant state, in which intermittent events can be observed, the size of
which is distributed by a power law. SOC has been initially discussed in
view of certain cellular automaton models, which are introduced and
explained in much detail in \cite{BTW} and \cite{BTW88}, as well as later
by \cite{Arenas}. In these models, particles can be interpreted as units of
granular material piling up, which coined the notion of ``sandpile
models''. Other applications, where self-organised critical behaviour has
been observed, are the size of landslides \cite{Noever}, earthquakes (the
famous Gutenberg-Richter law, see \cite{Gutenberg-Richter}) and stock
prices \cite{Mandelbrot}.

In \cite{Diaz-G} and \cite{Diaz-G-PhysRevA}, the abovementioned sandpile
models are related to a model similar to \eqref{eq:1}, \ie a stochastic
process in a continuous function space where mass of a continuously
distributed size is both added and subtracted. In contrast to the
assumptions mentioned above, the potential in \cite{Diaz-G} is only
one-sided. As this leads to a process just forced towards $-\infty$, where
no avalanches would occur, we consider symmetric potentials instead.

The underlying mechanisms of SOC have been a matter of lively discussion in
the literature, see \eg \cite{Watkins-Pruessner} for a review. The present
work is supposed to contribute to this question by noting that SPDEs with
singular-degenerate drift and additive noise incorporate several
characteristic properties of the original sandpile models, such as deterministic
dynamics which are locally switched on at a certain threshold. However,
they also differ from them in other perspectives, such as the non-discrete
structure. By setting up a theory for those processes, we ultimately hope
to gain insight into their long-time statistics, see \eg
\cite{Neuss-ergodicity}. Thereby, we aim to investigate whether SOC extends
to the continuous setting and potentially set the stage for new ways of
explaining this statistical effect.

\subsection{General notation} \label{sec:general-notation} Unless specified
differently, function or measure spaces will be understood to be defined on
a smooth, bounded domain $\cO\subset\R^d, d\in\N$. We write
$L^p = L^p(\cO)$ for the usual Lebesgue spaces with norm
$\norm{\cdot}_{L^p}$ and scalar product $\sp{\cdot}{\cdot}_{L^2}$ if
$p=2$. The Lebesgue measure is denoted by $\d x$, and a measure with
density $h\in L^1$ with respect to $\d x$ is denoted by
$h\,\d x$. Furthermore, $\Hsob = \Hsob(\cO)$ denotes the Sobolev space of
$L^2$ functions whose first-order weak derivatives exist and are in $L^2$,
and which have zero trace, with norm
$\norm{u}_\Hsob = \norm{\nabla u}_{L^2}$. The full space analogues
$L^2(\R^d)$, $H^1(\R^d)$ are defined correspondingly. Furthermore, let
$\Hd$ denote the topological dual of $\Hsob$. We use $-\Delta$ to denote
the corresponding Riesz isomorphism, which gives rise to the inner product
\begin{displaymath}
  \sp{u}{v}_\Hd = \dup{\Hd}{u}{(-\Delta)^{-1}v}{\Hsob}\quad\text{for
    all }u,v\in\Hd,
\end{displaymath}
where the notation $\dup{V'}{u}{v}{V} = \dup{V}{v}{u}{V'}$ denotes
evaluating a functional $u$ belonging to the dual space $V'$ of a
Banach space $V$ at a vector $v\in V$.

Moreover, we let $\cC^0_0 = \cC^0_0(\cO)$ denote
the set of all continuous functions on $\cO$ vanishing at the
boundary, while we write $\cC^0_c = \cC^0_c(\cO)$ for continuous functions with compact
support. The same notation applies to spaces $\cC^k$ of $k$ times
continuously differentiable functions.

For $m\in[0,1]$ we define the set
\begin{displaymath}
  L^{m+1}\cap\Hd := \left\{v\in L^{m+1}: \exists\, C\geq0\text{ such
      that}\int v \eta\,\d x \leq C\norm{\eta}_\Hsob \text{ for all }\eta\in C^1_c\right\}.
\end{displaymath}
Note that $L^2 = L^2\cap\Hd$ by the Cauchy-Schwarz and Poincaré
inequalities. To each $v\in L^{m+1}\cap\Hd$ one can injectively assign a map
\begin{equation}\label{eq:98}
  \cC^1_c \ni \eta \mapsto \int v\eta\, \d x.
\end{equation}
By continuity, \eqref{eq:98} can be injectively extended to a bounded
linear functional on $\Hsob$, which we call $\iota_m(v)$. The resulting map
$\iota_m: L^{m+1}\cap\Hd\to\Hd$ is thus injective, which allows to
identify $v$ with $\iota_m(v)$.

Let $\cM = \cM(\cO)$ be the space of all signed Radon measures on $\cO$ with
finite total variation, which is isomorphic to the dual space
$\left(\cC_0^0\right)'$ via
\begin{equation}
  \cM \ni \mu \mapsto \tilde{\mu}\in \left(\cC_0^0\right)',\quad
  \tilde{\mu}(f) = \int f \d\mu.
\end{equation}
This allows us to use $\left(\cC_0^0\right)'$ and $\cM$, as well as $\tilde{\mu}$
and $\mu$ interchangeably. The variation measure of $\mu\in\cM$ is denoted
by $\abs{\mu} := \mu_++ \mu_-$ and the total variation of $\mu$ is given by
\begin{displaymath}
  \norm{\mu}_{TV} = \abs{\mu}(\cO).
\end{displaymath}
Note that the total variation is also the operator norm if the measure
is interpreted as an element of $\left(\cC_0^0\right)'$ by the
Riesz-Markov representation theorem (see e.\ g.\
\cite[Theorem 1.200]{Fonseca}). We define the space of \textit{measures of
  bounded energy} by
\begin{displaymath}
  \cM\cap\Hd := \left\{\mu\in \cM: \exists \,C\geq
               0\text{ such that}\int \eta(x)\,\d\mu(x) \leq  C\norm{\eta}_\Hsob\ \text{for all }\eta\in \cC^1_c(\cO)\right\}.
\end{displaymath}
By a density argument, restricting a measure $\mu\in\cM\cap\Hd$ to a
function on $\cC^1_c$ is an injective operation. Moreover, by continuity
$\mu|_{\cC^1_c}$ can be injectively extended to a bounded linear functional on
$\Hsob$, which we call $\iota(\mu)$. The resulting map
$\iota: \cM\cap\Hd\to\Hd$ is thus injective, which allows to identify $\mu$
with $\iota(\mu)$.

In general, constants may vary from line to line, but are always
positive and finite.

\subsection{Acknowledgements}
I would like to thank my advisor Benjamin Gess for his support throughout
my work on this article. Support by the International Max Planck Research School
in Leipzig and by the ``Cusanuswerk -- Bischöfliche
Studienförderung'' is gratefully acknowledged.

\section{Assumptions and main result} \label{sec:requirements}

\begin{Ass}\label{assumptions}
  We require the following assumptions throughout
  this article.
  \begin{enumerate}[label=\textbf{(A\arabic*)},ref=(A\arabic*)]
  \item \label{item:16} $W$ is a cylindrical $\mathrm{Id}$-Wiener process
    in some separable Hilbert space $U$ defined on a probability space
    $(\Om, \cF, \P)$ with normal filtration $(\cF_t)_{t\geq 0}$, which
    means the following: There is a Hilbert-Schmidt embedding $J$ from $U$
    to another Hilbert space $U_1$, which can be chosen to be bijective
    (see \eg \cite[Remark 2.5.1]{Roeckner}). Defining $Q_1:= JJ^*$, $Q_1$
    is linear, bounded, non-negative definite, symmetric and has finite
    trace, so that we obtain a classical $Q_1$-Wiener process $\tilde{W}$
    on $U_1$. Moreover, for an operator $\tilde{B}: U\to H^{-1}$ we have
    \begin{equation}\label{eq:71}
      \tilde{B}\in L_2(U, H^{-1}) \Leftrightarrow \tilde{B}\circ J^{-1} \in
      L_2\left(Q_1^{\frac1{2}}(U_1), H^{-1}\right),
    \end{equation}
    such that if \eqref{eq:71} is satisfied, we can define
    \begin{displaymath}
      \int_0^T \tilde{B}\, \d W_t := \int_0^T \tilde{B}\circ J^{-1}\d\tilde{W}_t.
    \end{displaymath}
  \item The diffusion coefficients
    $B: [0,T]\times\Hd\times\Om\to L_2(U,\Hd)$ take values in the space of
    Hilbert-Schmidt operators, are progressively measurable and satisfy
    \begin{align}
      \norm{B(t,v)- B(t,w)}_{L_2(U,\Hd)}^2 &\leq C\norm{v-w}_\Hd^2\quad&\text{for all }v,w\in \Hd,\label{eq:22}\\
      \norm{B(t,v)}_{L_2(U,L^2)}^2 &\leq C(1 +
                                     \norm{v}_{L^2}^2)\quad&\text{for all
                                     }v\in L^2, \label{eq:25}\\
      \norm{B(t,0)}_{L_2(U,\Hd)}^2 &\leq C, \label{eq:73}
    \end{align}
    for some constant $C>0$ and all $(t,\omega)\in[0,T]\times\Om$.
  \item \label{item:11} The so-called \textit{potential}
    $\psirep: \R\to [0,\infty)$ is convex and lower-semicontinuous, and we
    assume $\psirep(0) = 0$, which then implies
    $0 \in \partial \psirep(0)$. For simplicity, we furthermore impose the
    symmetry assumption $\psirep(x) = \psirep(-x)$ for all $x\in\R$.
  \item \label{item:12} Define $\phi = \partial \psirep: \R\to 2^\R$, the
    subdifferential of $\psirep$, and assume for all $r\in\R$
    \begin{equation}\label{eq:53}
      \inf\{\abs{\eta}^2:\eta\in\phi(r)\}\leq
      C(1+\abs{r}^{2}).
    \end{equation}
  \end{enumerate}
  In case that
  \begin{equation}\label{eq:67}
    \lim_{\abs{x}\to\infty}\frac{\psirep(x)}{\abs{x}} \to \infty,
  \end{equation}
  \ie $\psirep$ is superlinear, we require
  \begin{enumerate}[label=\textbf{(A\arabic*)},ref=(A\arabic*)]\setcounter{enumi}{4}
  \item \label{item:13} There exists $ m\in(0,1]$, such that
    $\psirep(v) \in L^1(\cO)\quad\text{if and only if}\quad v\in L^{m+1}(\cO)$.
  \end{enumerate}
  In case that the potential is sublinear, \ie that there exists a
  constant $C>0$ such that
  \begin{equation}\label{eq:20}
    \psirep(x)\leq C(1+\abs{x})\quad \text{for all } x\in\R,
  \end{equation}
  we require
  \begin{enumerate}[label=\textbf{(A5')},ref=(A5')]
  \item \label{item:9} 
    There exists $y>0$ such that $\psirep(y)>0$.
  \end{enumerate}
  Note that by convexity, Assumption \ref{item:9} implies that
    \begin{displaymath}
      \psirep(x) \geq \frac{\psirep(y)}{y}\abs{x} - \psirep(y)\quad \text{for all }x\in\R.
    \end{displaymath}
\end{Ass}

Next, we define the energy functional for the notion
of solution we are going to consider.

\begin{Def}\label{def-energy-fnal}
  Let Assumptions \ref{assumptions} be satisfied.
  \begin{enumerate}[label=(\roman*), ref=(\roman*)]
  \item \label{item:1}In the case of a superlinear potential, \ie if \eqref{eq:67} is
    satisfied, we define for $u\in\Hd$ the functional
    \begin{equation}\label{eq:76}
      \vphi(u)=
      \begin{cases}
        \int \psirep(u)\ \d x,\quad & \text{if
        }u\in L^{m+1}\cap
        \Hd,\\
        +\infty, & \text{else,}
      \end{cases}
    \end{equation}
    where $m$ is the exponent from \ref{item:13}.
    \item \label{item:2} In the case of a
    sublinear potential, \ie if \eqref{eq:20} is satisfied, we define for
    $u\in\Hd$ the functional
    \begin{equation}\label{eq:77}
      \vphi(u)=
      \begin{cases}
        \norm{\psirep(u)}_{TV},\quad & \text{if }u\in \cM\cap\Hd,\\
        +\infty, & \text{else,}
      \end{cases}
    \end{equation}
    where the construction of a nonlinear functional of a
    measure, which is needed in \eqref{eq:77}, is given in Definition
    \ref{fofmu} below.
  \end{enumerate}
\end{Def}

\begin{Rem} \label{justification-GF} The choice of the energy functional in
  Definition \ref{def-energy-fnal} allows us to reformulate \eqref{eq:1} as
  a gradient flow, \ie to rewrite it in the form
  \begin{align}\label{eq:68}
    \begin{split}
      \d X_t &\in -\partial \vphi(X_t) \d t + B(t,X_t)\d W_t,\\
      X_0 &= x_0,
    \end{split}
  \end{align}
  where the subdifferential is well-defined due to Proposition
  \ref{conv+lsc} below. More precisely, let a ``classical'' solution to
  \eqref{eq:1} with $x_0\in\Hd$ be defined as an
  $(\cF_t)_{t\geq 0}$-adapted process $X \in L^2(\Om; \cC([0,T];\Hd)$ with
  the following properties: $\P$-almost surely, for all $t\in[0,T]$ we have
  $X_t\in L^2$, there is a choice $v_t\in \phi(X_t)$ such that
  $v_t\in\Hsob$, and
  \begin{displaymath}
    X_t = x_0 + \int_0^t\Delta v_r\, \d r + \int_0^tB(r,X_r)\ \d W_r.
  \end{displaymath}
  Furthermore, we impose $\Delta v \in L^2([0,T]\times\Om;\Hd)$. Then, one
  can check that if $X$ is
  a classical solution in this sense, $(X, \Delta v)$ is a strong solution
  to
  \begin{align}
    \begin{split}
      \d X_t &\in -\partial \vphi(X_t) \d t + B(t,X_t)\d W_t\\
      X_0 &= x_0
    \end{split}
  \end{align}
  in the sense of \cite[Appendix A]{G-R}.
\end{Rem}

Now we are in the position to formulate the notion of solution we want to
consider.
\begin{Def}[SVI solution] \label{Def-SVI} Given Assumptions
  \ref{assumptions}, let $x_0\in L^2(\Om, \cF_0; \Hd)$, $T>0$ and $\vphi$
  be defined
  as in Definition \ref{def-energy-fnal}. We say that an $\cF_t$-adapted
  process $X\in L^2(\Om; \cC([0,T];\Hd))$ is an SVI solution to \eqref{eq:1}
  if the following conditions are satisfied:
  \begin{enumerate}[label=(\roman*)]
  \item \label{item:6}(Regularity)
    \begin{displaymath}
      \vphi(X)\in L^1([0,T]\times\Om).
    \end{displaymath}
  \item \label{item:7}(Variational inequality) For each
    $\cF_t$-progressively measurable process
    $G\in L^2([0,T]\times\Om;\Hd)$, and each $\cF_t$-adapted process
    $Z\in L^2(\Om;\cC([0,T];\Hd)) \cap L^2([0,T]\times\Om;L^2)$ solving the
    equation
    \begin{displaymath}
      Z_t-Z_0 = \int_0^tG_s\,\d s + \int_0^tB(s,Z_s)\,\d W_s\quad\text{for
        all }t\in[0,T],
    \end{displaymath}
    we have
    \begin{align}
      \begin{split}\label{eq:4}
        \E\norm{X_t-Z_t}_\Hd^2 + 2\E\int_0^t\vphi(X_r)\d r
        \ \leq&\ \E\norm{x_0-Z_0}_\Hd^2 + 2\E\int_0^t\vphi(Z_r)\d r\\
        &-2\,\E\int_0^t\sp{G_r}{X_r-Z_r}_\Hd\d r\\
        & +C\,\E\int_0^t\norm{X_r-Z_r}_\Hd^2\d r \quad\text{for
          all }t\in[0,T]
      \end{split}
    \end{align}
    for some $C>0$.
  \end{enumerate}
\end{Def}

\begin{Rem} \label{relaxation-rem} It is shown in \cite[Remark 2.2]{G-R}
  that if $(X,\eta)$ is a strong solution to \eqref{eq:68} in $\Hd$, as defined
  in \cite[Appendix A]{G-R}, then $X$ is an SVI solution to
  (\ref{eq:1}).
\end{Rem}

The main result of this article is as follows.
\begin{thm}\label{main-thm}
  Given Assumptions \ref{assumptions}, let $x_0\in L^2(\Om, \cF_0;\Hd)$ and
  $T>0$. Then there is a unique SVI solution $X$ to \eqref{eq:1}. For two
  SVI solutions $X, Y$ with initial conditions
  $x_0, y_0\in L^2(\Om, \cF_0; \Hd)$, we have
  \begin{equation}\label{eq:19}
    \sup_{t\in[0,T]}\E\norm{X_t-Y_t}^2_\Hd\leq C\,\E\norm{x_0-y_0}^2_\Hd.
  \end{equation}
\end{thm}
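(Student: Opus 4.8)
The plan is to follow the scheme of \cite[Section 2]{G-R}: first construct a solution by a monotone (Yosida plus viscosity) approximation, then verify that the limit is an SVI solution in the sense of Definition \ref{Def-SVI}, and finally extract both uniqueness and the estimate \eqref{eq:19} directly from the variational inequality \eqref{eq:4}.

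For existence I would regularise \eqref{eq:1} by replacing $\phi = \partial\psirep$ with its Yosida approximation $\phi_\lambda$ and adding a viscosity term $\eps\Delta$, obtaining
\begin{displaymath}
  \d X^{\lambda,\eps}_t = \left(\eps\Delta X^{\lambda,\eps}_t + \Delta\phi_\lambda(X^{\lambda,\eps}_t)\right)\d t + B(t,X^{\lambda,\eps}_t)\,\d W_t,\qquad X^{\lambda,\eps}_0 = x_0.
\end{displaymath}
This equation has a single-valued, monotone and Lipschitz drift, so the classical variational theory for monotone SPDE (as summarised in \cite{Roeckner}) yields a unique strong solution. Applying It\^o's formula to $\norm{X^{\lambda,\eps}_t}_\Hd^2$ and using the monotonicity of $\phi_\lambda$ together with the bounds \eqref{eq:25} and \eqref{eq:73} on $B$, I would derive a priori estimates, uniform in $\lambda$ and $\eps$, for $X^{\lambda,\eps}$ in $L^2(\Om;\cC([0,T];\Hd))$, for $\sqrt{\eps}\,X^{\lambda,\eps}$ in $L^2([0,T]\times\Om;\Hsob)$, and for the Moreau--Yosida energy $\E\int_0^T\vphi_\lambda(X^{\lambda,\eps}_r)\,\d r$. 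Extracting a convergent subsequence $X^{\lambda,\eps}\to\bar X$, I would identify $\bar X$ as an SVI solution: the regularity \ref{item:6} follows from lower-semicontinuity of $\vphi$ (Proposition \ref{conv+lsc}) via Fatou's lemma, while the variational inequality \ref{item:7} is obtained by passing to the limit in the inequalities satisfied by the approximations (which hold by the reasoning of Remark \ref{relaxation-rem} applied to the regularised energy $\vphi_\lambda$), again relying on lower-semicontinuity of $\vphi$ and on the energy identification of Theorem \ref{approx-thm}.

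For uniqueness and continuous dependence I would argue as follows. Let $X$ be an arbitrary SVI solution with datum $x_0$, and let $\bar Y$ be the constructed solution with datum $y_0$, approximated by the regularised solutions $Y^\lambda$ as above; each $Y^\lambda$ is $L^2$-valued and solves an equation of the admissible form, hence is a legitimate test process $Z$ in \eqref{eq:4}. Writing the drift of $Y^\lambda$ as $G^\lambda_r = -\partial\vphi_\lambda(Y^\lambda_r)$ and using the subgradient inequality $\sp{G^\lambda_r}{X_r-Y^\lambda_r}_\Hd \geq \vphi_\lambda(Y^\lambda_r) - \vphi(X_r)$ (together with $\vphi_\lambda\leq\vphi$), the two copies of $\E\int_0^t\vphi(X_r)\,\d r$ cancel in \eqref{eq:4} and leave
\begin{displaymath}
  \E\norm{X_t-Y^\lambda_t}_\Hd^2 \leq \E\norm{x_0-y_0}_\Hd^2 + 2\,\E\int_0^t\left(\vphi(Y^\lambda_r)-\vphi_\lambda(Y^\lambda_r)\right)\d r + C\,\E\int_0^t\norm{X_r-Y^\lambda_r}_\Hd^2\,\d r.
\end{displaymath}
Letting $\lambda\to0$, the middle term vanishes by the energy convergence established in the existence step, and Gronwall's lemma yields $\sup_{t}\E\norm{X_t-\bar Y_t}_\Hd^2\leq C\,\E\norm{x_0-y_0}_\Hd^2$. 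Taking $y_0=x_0$ forces $X=\bar X$, proving uniqueness; the estimate with general $x_0,y_0$ then reads as \eqref{eq:19}.

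The step I expect to be the main obstacle is controlling the energy in the limit, that is, showing $\E\int_0^T\left(\vphi(Y^\lambda_r)-\vphi_\lambda(Y^\lambda_r)\right)\d r\to0$ and, more generally, that no energy is lost in the limit $Y^\lambda\to\bar Y$. Because SVI solutions live only in $\Hd$ while the admissible test processes must be $L^2$-valued, this is precisely the point where $\vphi$ must be well approximated by its restriction to $L^2$; it is handled by the identification of $\vphi$ with the lower-semicontinuous hull of $\vphi|_{L^2}$ in Theorem \ref{approx-thm}, whose proof rests on the mollification-and-shift construction adapted from \cite[Lemma A6.7]{Alt}.
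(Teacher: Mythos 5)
Your proposal follows essentially the same route as the paper: Yosida-plus-viscosity regularisation, passage to the limit using the lower-semicontinuity of $\vphi$ (Proposition \ref{conv+lsc}) to verify both parts of Definition \ref{Def-SVI}, and uniqueness by testing an arbitrary SVI solution $X$ against the regularised approximations of the constructed solution, finishing with Gronwall. One imprecision in the uniqueness step is worth flagging: the inequality $\sp{G^\lambda_r}{X_r-Y^\lambda_r}_\Hd \geq \vphi_\lambda(Y^\lambda_r)-\vphi(X_r)$ does not follow from ``$\vphi_\lambda\leq\vphi$'' when $X_r$ is a genuine measure, because the regularised functional is $+\infty$ off $L^2$ (cf.\ \eqref{eq:101}); moreover, the discrepancy $\vphi(Y^\lambda_r)-\vphi_\lambda(Y^\lambda_r)$ that you single out as the main obstacle is actually handled by the elementary Moreau--Yosida bound \eqref{eq:110}, since $Y^\lambda_r$ is $L^2$-valued. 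The place where Theorem \ref{approx-thm} is genuinely needed is the one you only gesture at: the subgradient inequality must be extended from $u\in L^2$ to $u=X_r\in\cM(\cO)\cap\Hd$ by approximating $X_r$ with the $L^2$ sequence $(u_j)$ of Theorem \ref{approx-thm} and passing to the limit using $\vphi(u_j)\to\vphi(X_r)$ and $u_j\rightharpoonup X_r$ in $\Hd$. With that relocation your argument coincides with the paper's.
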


The proof of this theorem will be given in Section
\ref{sec:proof-existence} below.

\section{Properties of the energy functional} \label{sec:preparatory}

The aim of this section is to make Definition \ref{def-energy-fnal}
rigorous by recalling the concept of convex functionals on measures, and to
prove certain properties of the energy functional defined in Definition
\ref{def-energy-fnal}, which are needed for the proof of the main
theorem. We start with some basic concepts concerning convex functions.

\begin{Def}\label{Def-conj-rec}
  Let $f: \R\to[0,\infty]$ be a convex and lower-semicontinuous function
  with $f(0) = 0$. We then define its convex conjugate
  $f^*: \R\to[0,\infty]$ by
  \begin{equation}\label{eq:80}
    f^*(x) = \sup_{y\in\R}\,(xy - f(y)),
  \end{equation}
  and its recession function $f_\infty: \R\to[0,\infty]$ by
  \begin{equation} \label{eq:28}
    f_\infty(x) = \lim_{t\to\infty}\frac{f(tx)}{t}.
  \end{equation}
\end{Def}

\begin{Rem}\label{facts-conj-rec}
  Note that $f_\infty$ and $f^*$ are convex. If $f$ is symmetric, so are
  $f_\infty$ and $f^*$. Moreover, $f_\infty$ is positively homogeneous.
\end{Rem}

For the notion of solution that we are aiming at, we need the concept of a
convex function of a measure, which has been developed in \cite{Temam-Demengel}.
\begin{Def}\label{fofmu}
  Let $\psirep$ satisfy \eqref{eq:20} as well as Assumptions
  \ref{assumptions} \ref{item:11}, \ref{item:9}. Define the set
  \begin{displaymath}
    \cD_\psirep = \{ v\in \cC_c^0(\cO): \psirep^\ast(v) \in L^1(\cO)\}
  \end{displaymath}
  and let $\mu\in\cM(\cO)$. We then
  define the positive measure $\psirep(\mu)\in\cM(\cO)$ by
  \begin{equation}\label{eq:81}
    \int_\cO \eta\, \psirep(\mu) := \dup{\cM(\cO)}{\psirep(\mu)}{\eta}{\cC_0^0(\cO)} := \sup\left\{\int_\cO v\eta\ \d\mu - \int_\cO \psirep^\ast(v)\eta\ \d x:
      v\in\cD_\psirep\right\}
  \end{equation}
  for $\eta\in \cC_0^0(\cO), \eta\geq 0$, and for general $\eta\in \cC_0^0(\cO)$ we set
  \begin{displaymath}
    \dup{\cM(\cO)}{\psirep(\mu)}{\eta}{\cC_0^0(\cO)} = \dup{\cM(\cO)}{\psirep(\mu)}{\eta\vee 0}{\cC_0^0(\cO)} -
    \dup{\cM(\cO)}{\psirep(\mu)}{(-\eta)\vee 0}{\cC_0^0(\cO)},
  \end{displaymath}
  according to \cite[Theorem
  1.1]{Temam-Demengel}.
\end{Def}
\begin{Rem} \label{Representation-remark}
  As argued in \cite[Lemma 1.1]{Temam-Demengel}, one can write for $\mu\in\cM(\cO)$
  \begin{displaymath}
    \int_{\cO} \psirep(\mu) = \norm{\psirep(\mu)}_{TV} = \sup\left\{\int_\cO v\,\d\mu - \int_\cO \psirep^*(v)\,\d x:
        v\in\cD_\psirep\right\}.
  \end{displaymath}
\end{Rem}
\begin{Rem}\label{decomp-rem}
  Let $\mu\in\cM(\cO)$ with Lebesgue decomposition $\mu^a + \mu^s$, where
  $\mu^a$ has the density $h\in L^1(\cO)$ with respect to the Lebesgue
  measure. Then, by \cite[Theorem 1.1]{Temam-Demengel}, we have
  \begin{equation}\label{eq:82}
    \int_\cO \eta\ \psirep(\mu) = \int_\cO \eta(x) \psirep(h(x)) \d x + \int_\cO \eta\
    \psirep_\infty(\mu^s),
  \end{equation}
  where the recession function $\psirep_\infty$ is defined as in (\ref{eq:28}). In
  particular, this formulation shows the useful fact that
  \begin{equation} \label{eq:29}
    \psirep(\mu) = \psirep(\mu^a) + \psirep(\mu^s).
  \end{equation}
\end{Rem}

Our next aim is to prove the lower-semicontinuity of the energy functional
defined in Definition \ref{def-energy-fnal} and Definition
\ref{fofmu}. First, we show that the Radon measure $\psirep(\mu)$
constructed in Definition \ref{fofmu} controls the norm of its original
measure $\mu$ in the following way.
\begin{lem} \label{varphi-TV}
  Let $\psirep$ satisfy \eqref{eq:20} as well as
  Assumptions \ref{assumptions} \ref{item:11}, \ref{item:9}. Let $\mu\in\cM(\cO)$ and let $y>0$ such that
  $\psirep(y)>0$ as demanded in Assumption \ref{assumptions} \ref{item:9}. Then
  \begin{displaymath}
    \norm{\psirep(\mu)}_{TV} \geq \frac{\psirep(y)}{y} \norm{\mu}_{TV} - \psirep(y)\abs{\cO}.
  \end{displaymath}
\end{lem}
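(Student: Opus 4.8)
The plan is to exploit the dual representation of $\tvnorm{\psirep(\mu)}$ recorded in Remark \ref{Representation-remark},
\begin{displaymath}
  \tvnorm{\psirep(\mu)} = \sup\left\{\int_\cO v\,\d\mu - \int_\cO \psirep^*(v)\,\d x : v\in\cD_\psirep\right\},
\end{displaymath}
and to feed it cleverly chosen test functions. Set $a := \psirep(y)/y > 0$. The key preliminary estimate is a bound on the conjugate $\psirep^*$ on the interval $[-a,a]$: starting from the linear lower bound $\psirep(x) \geq a\abs{x} - \psirep(y)$ recorded right after Assumption \ref{item:9}, for any $v$ with $\abs{v}\leq a$ one has $vx - \psirep(x) \leq \abs{v}\,\abs{x} - a\abs{x} + \psirep(y) \leq \psirep(y)$ for all $x\in\R$, and hence $\psirep^*(v)\leq\psirep(y)$. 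Together with $\psirep^*\geq 0$ (which follows from $\psirep(0)=0$ since $\psirep^*(v)\geq v\cdot 0 - \psirep(0)=0$), this shows that any continuous $v$ with $\abs{v}\leq a$ already lies in $\cD_\psirep$, because $\psirep^*(v)$ is then bounded and $\cO$ has finite measure.

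Next I would test the representation with functions of the form $v = a\,w$, where $w\in\cC^0_c(\cO)$ satisfies $\norm{w}_{L^\infty}\leq 1$. For such $v$ we have $\abs{v}\leq a$, so $v\in\cD_\psirep$ and $\psirep^*(v)\leq\psirep(y)$ pointwise, whence
\begin{displaymath}
  \tvnorm{\psirep(\mu)} \geq \int_\cO a\,w\,\d\mu - \int_\cO \psirep^*(a\,w)\,\d x \geq a\int_\cO w\,\d\mu - \psirep(y)\abs{\cO}.
\end{displaymath}
Taking the supremum over all admissible $w$, and using that by the Riesz--Markov duality $\cM = (\cC^0_0)'$ together with the density of $\cC^0_c$ in $\cC^0_0$ in the supremum norm one has $\sup\{\int_\cO w\,\d\mu : w\in\cC^0_c(\cO),\ \norm{w}_{L^\infty}\leq 1\} = \tvnorm{\mu}$, I obtain
\begin{displaymath}
  \tvnorm{\psirep(\mu)} \geq a\,\tvnorm{\mu} - \psirep(y)\abs{\cO} = \frac{\psirep(y)}{y}\,\tvnorm{\mu} - \psirep(y)\abs{\cO},
\end{displaymath}
which is exactly the claimed inequality.

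I do not anticipate a serious obstacle, as the argument is essentially a duality computation. The one point requiring care is the correct interplay of the two dual formulas: the scaling $a$ of the test functions must be chosen precisely so that the penalty term $\int_\cO\psirep^*(v)\,\d x$ stays bounded (which is exactly what the linear lower bound on $\psirep$, equivalently the estimate $\psirep^*\leq\psirep(y)$ on $[-a,a]$, guarantees) while the linear term $\int_\cO a\,w\,\d\mu$ reconstructs $a\,\tvnorm{\mu}$ upon optimising over $w$. Verifying the membership $v\in\cD_\psirep$ and matching the supremum over $\cC^0_c$ with the total variation of $\mu$ are the only technical details, and both are routine.
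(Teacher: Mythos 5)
Your proof is correct, and it takes a genuinely different route from the paper's. The paper splits $\mu$ into its Lebesgue decomposition $\mu^a+\mu^s$, applies the pointwise linear lower bound $\psirep(x)\geq\frac{\psirep(y)}{y}\abs{x}-\psirep(y)$ to the density of $\mu^a$, and handles the singular part separately via the recession function: it invokes the singularity of $\psirep_\infty(\mu^s)$, the identification of $\cD_{\psirep_\infty}$ and $\psirep_\infty^*$ in Lemma \ref{char-rec-fn}, and Corollary \ref{finf1} to extract the factor $k=\frac{\psirep(y)}{y}$ from the singular part, before recombining via $\tvnorm{\mu}=\tvnorm{\mu^a}+\tvnorm{\mu^s}$. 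You instead work entirely with the dual formula of Remark \ref{Representation-remark}, establish the conjugate bound $\psirep^*\leq\psirep(y)$ on $[-a,a]$ with $a=\frac{\psirep(y)}{y}$ (which is essentially the content of Lemma \ref{boundconj} in the appendix, proved here directly from the linear lower bound), and test with the scaled unit ball $\{a\,w: w\in\cC^0_c,\ \norm{w}_{L^\infty}\leq 1\}$, recovering $a\tvnorm{\mu}$ by Riesz--Markov duality. Your argument is more self-contained: it bypasses the Lebesgue decomposition, the recession-function lemmas, and the external reference for the singularity of $\psirep_\infty(\mu^s)$, treating both parts of the measure in one stroke. What the paper's decomposition buys in exchange is a transparent accounting of how the absolutely continuous and singular parts each contribute, consistent with the surrounding use of \eqref{eq:82} and \eqref{eq:29}; the constants obtained are identical. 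The only details you should make sure are nailed down are the measurability of $\psirep^*(a\,w)$ (immediate, since $\psirep^*$ is convex and finite on $[-a,a]$, hence continuous there) and the density of $\cC^0_c$ in $\cC^0_0$ used to identify the supremum with $\tvnorm{\mu}$; both are routine, as you say.
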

\begin{proof}
  For $\mu\in\cM(\cO)$, denote by $\mu=\mu^{a}+\mu^{s}$ the Lebesgue
  decomposition of $\mu$ with respect to Lebesgue measure, and let
  $h=\frac{\d\mu^{a}}{\d x}$ be the Radon-Nikodym derivative of $\mu^a$. As
  $\psirep_\infty(\mu^s)$ is singular by
  \cite[Theorem 4.2]{Temam-book}, we can use the decomposition
  \eqref{eq:82} to obtain
  \begin{equation}\label{eq:56}
    \norm{\psirep(\mu)}_{TV} = \int_\cO \psirep(h)\,\d x + \norm{\psirep_\infty(\mu^s)}_{TV}.
  \end{equation}
  We now estimate the summands separately. For the absolutely continuous
  part we obtain using Assumption \ref{assumptions} \ref{item:9}
  \begin{displaymath}
    \int_\cO \psirep(h)\, \d x
    \geq \frac{\psirep(y)}{y} \int_\cO \abs{h} \d x - \psirep(y)\abs{\cO}
    = \frac{\psirep(y)}{y}\tvnorm{\mu^a} - \psirep(y)\abs{\cO}.
  \end{displaymath}
  
  For the singular part, we note by Lemma \ref{char-rec-fn} that for
  $v\in\cC_c^0(\cO)$ being in $\cD_{\psirep_\infty}$ is equivalent to
  $-\psirep_\infty(1)\leq v \leq \psirep_\infty(1)$, and for such $v$,
  $\psi_\infty^*(v) \equiv 0$. Thus, we get with Corollary \ref{finf1} with
  $k:= \frac{\psirep(y)}{y}$
  \begin{displaymath}
    \int_\cO \psirep_\infty(\mu^s) 
    = \sup_{v\in\cD_{\psirep_\infty}} \left(\int_\cO v\ \d\mu^s - \int
        \psirep_\infty^*(v)\d x\right) \geq \sup_{\substack{v\in \cC_c^0(\cO)\\-k \leq v\leq k}} \int_\cO v\
    \d\mu^s = k \tvnorm{\mu^s}.
  \end{displaymath}
  Thus, we can continue \eqref{eq:56} by
  \begin{displaymath}
    \tvnorm{\psirep(\mu)}
    \geq \frac{\psirep(y)}{y}\tvnorm{\mu^a} + k\tvnorm{\mu^s} -
    \psirep(y)\abs{\cO} = \frac{\psirep(y)}{y}\tvnorm{\mu} - \psirep(y)\abs{\cO},
  \end{displaymath}
  as required.
\end{proof}

\begin{prop}\label{conv+lsc}
  In both settings of Definition \ref{def-energy-fnal},
  $\vphi:\Hd\to[0,\infty]$ is convex and lower-se\-mi\-continuous.
\end{prop}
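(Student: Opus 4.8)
The plan is to separate convexity from lower-semicontinuity and to run the two settings of Definition \ref{def-energy-fnal} in parallel. Convexity is the routine part. In the sublinear case, Remark \ref{Representation-remark} exhibits $\vphi(u)=\tvnorm{\psirep(\mu)}$, for the measure $\mu$ identified with $u$, as the pointwise supremum over $v\in\cD_\psirep$ of the quantities $\int_\cO v\,\d\mu-\int_\cO\psirep^*(v)\,\d x$. Since $\cM\cap\Hd$ is a linear subspace on which the identification $u\leftrightarrow\mu$ is linear, and $\mu\mapsto\int_\cO v\,\d\mu$ is linear, each such quantity is an affine functional of $u\in\Hd$; a supremum of affine functionals is convex, and the value $+\infty$ off $\cM\cap\Hd$ does not disturb this. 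In the superlinear case, $L^{m+1}\cap\Hd$ is likewise a linear subspace, and $u\mapsto\int_\cO\psirep(u)\,\d x$ is convex there by the pointwise convexity of $\psirep$; extension by $+\infty$ again preserves convexity.

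For lower-semicontinuity I would argue sequentially, which is legitimate since $\Hd$ is metrizable. Fix $u_n\to u$ in $\Hd$; we may assume $\liminf_n\vphi(u_n)<\infty$ and, along a subsequence, that $\vphi(u_n)$ converges to this liminf and is bounded by some $M$. In the sublinear case, Lemma \ref{varphi-TV} converts $\tvnorm{\psirep(\mu_n)}\le M$ into a uniform bound on $\tvnorm{\mu_n}$, so Banach--Alaoglu yields a further subsequence with $\mu_n\tows\nu$ in $\cM=(\cC^0_0)'$. Testing against $\eta\in\cC^1_c\subset\Hsob$ gives both $\int_\cO\eta\,\d\mu_n\to\int_\cO\eta\,\d\nu$ and $\int_\cO\eta\,\d\mu_n=\dup{\Hd}{u_n}{\eta}{\Hsob}\to\dup{\Hd}{u}{\eta}{\Hsob}$, so that $\nu\in\cM\cap\Hd$ is identified with $u$. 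Since every $v\in\cD_\psirep\subset\cC^0_c\subset\cC^0_0$ is admissible for weak-$\ast$ convergence, $\int_\cO v\,\d\mu_n\to\int_\cO v\,\d\nu$, and the representation of Remark \ref{Representation-remark} gives $\vphi(u)=\tvnorm{\psirep(\nu)}=\sup_v(\int_\cO v\,\d\nu-\int_\cO\psirep^*(v)\,\d x)\le\liminf_n\tvnorm{\psirep(\mu_n)}$.

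In the superlinear case I would replace weak-$\ast$ compactness of measures by weak compactness in $L^1$. Superlinearity \eqref{eq:67} lets me invoke the criterion of de la Vallée-Poussin: the bound $\int_\cO\psirep(u_n)\,\d x\le M$ forces $(u_n)$ to be uniformly integrable in $L^1(\cO)$, so by Dunford--Pettis a subsequence converges weakly in $L^1$ to some $w$. Testing against $\eta\in\cC^1_c$ exactly as above identifies $w$ with $u$. The convex integral functional $f\mapsto\int_\cO\psirep(f)\,\d x$ is strongly lower-semicontinuous on $L^1$ by Fatou, hence sequentially weakly lower-semicontinuous by convexity, which yields $\int_\cO\psirep(w)\,\d x\le\liminf_n\int_\cO\psirep(u_n)\,\d x\le M<\infty$. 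Then Assumption \ref{item:13} upgrades $\psirep(w)\in L^1$ to $w\in L^{m+1}$, so that $u\in L^{m+1}\cap\Hd$ and $\vphi(u)=\int_\cO\psirep(w)\,\d x\le\liminf_n\vphi(u_n)$.

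The main obstacle is precisely the matching step: the bound on $\vphi(u_n)$ only controls the sequence in a weaker topology (weak-$\ast$ in $\cM$, respectively weak in $L^1$), and one must verify that the resulting weak limit coincides with the given strong $\Hd$-limit $u$ and, in the superlinear case, genuinely lies in the smaller space $L^{m+1}$ rather than only $L^1$. I expect the identification through $\cC^1_c$ test functions, together with the interplay of the weak lower-semicontinuity of $\int_\cO\psirep(\cdot)\,\d x$ and the equivalence in Assumption \ref{item:13}, to be the delicate points, while the remaining estimates are bookkeeping.
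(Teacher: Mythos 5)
Your proof is correct, but it departs from the paper's in two places worth noting. In the superlinear case the paper simply cites \cite[p.~68]{Barbu} for both convexity and lower-semicontinuity, whereas you give a self-contained argument (de la Vall\'ee-Poussin and Dunford--Pettis to extract a weak $L^1$ limit, identification with $u$ via $\cC^1_c$ test functions, weak sequential lower-semicontinuity of the convex integral functional, and Assumption \ref{item:13} to upgrade $\psirep(w)\in L^1$ to $w\in L^{m+1}$); this is more work but makes the proof self-contained, and the upgrade step via \ref{item:13} is exactly the point the citation would otherwise hide. In the sublinear case your skeleton coincides with the paper's Step~2 (the bound from Lemma \ref{varphi-TV}, weak-$\ast$ compactness in $\cM$, identification of the limit measure with $u$ through $\cC^1_c$), but you replace the paper's Step~1 --- which extracts a further weak-$\ast$ limit of the measures $\psirep(\mu_{n_k})$ and invokes \cite[Lemma 2.1]{Temam-Demengel} --- by the direct observation that, via Remark \ref{Representation-remark}, $\mu\mapsto\tvnorm{\psirep(\mu)}$ is a pointwise supremum of weak-$\ast$ continuous affine functionals and hence weak-$\ast$ lower-semicontinuous. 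Your route is more elementary and avoids the auxiliary compactness argument for $(\psirep(\mu_{n_k}))_k$; the paper's route yields the slightly stronger by-product that $\psirep(\mu_{n_k})$ itself converges weak-$\ast$ to a measure dominating $\psirep(\mu)$, which is in the spirit of the machinery reused later (e.g.\ in the proof of Theorem \ref{approx-thm}). Both arguments are sound.
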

\begin{proof}
  In the superlinear case, \ie Definition \ref{def-energy-fnal}
  \ref{item:1} applies,
  convexity and lower-semicontinuity of $\vphi$ are proved in \cite[p.\
  68]{Barbu}. In the sublinear case, \ie Definition \ref{def-energy-fnal}
  \ref{item:2} applies, convexity becomes clear by Remark
  \ref{Representation-remark}. It remains to prove lower-semicontinuity in
  the sublinear case.

  \textit{Step 1:} As a preparatory step, we establish weak{*}
  lower-semicontinuity of the functional $\tilde{\vphi}: \cM(\cO)\to[0,\infty)$,
  \begin{displaymath}
    \tilde{\vphi}(\mu)=\norm{\psirep(\mu)}_{TV},
  \end{displaymath}
  for which we have 
  \begin{displaymath}
    \tilde{\vphi}|_{\cM(\cO)\cap\Hd}=\vphi.
  \end{displaymath}
  Consider $\mu_n \to \mu$ weakly{*} for $n\to\infty$. We can assume that
  $\psirep(\mu_n)$ contains a subsequence which is bounded in TV norm
  (otherwise there is nothing to show). Then we select a subsequence
  $(\mu_{n_k})_{k\in\N}$ such that
  $\norm{\psirep(\mu_{n_k})}_{TV} \to \liminf_{n\to\infty}
  \norm{\psirep(\mu_n)}_{TV}$ for $k\to\infty$, from which we can choose a
  nonrelabeled subsequence $(\psirep(\mu_{n_k}))_{k\in\N}$ which converges
  weakly{*} to some $\nu\in\cM(\cO)$ (\eg by \cite[Satz 6.5]{Alt}). By
  \cite[Lemma 2.1]{Temam-Demengel}, we get that
  \begin{displaymath}
    \dup{\cM(\cO)}{\psirep(\mu)}{\eta}{\cC_0^0(\cO)} \leq \dup{\cM(\cO)}{\nu}{\eta}{\cC_0^0(\cO)} = \lim_{k\to\infty}\dup{\cM(\cO)}{\psirep(\mu_{n_k})}{\eta}{\cC_0^0(\cO)} \leq
    \lim_{k\to\infty}\tvnorm{\psirep(\mu_{n_k})}\norm{\eta}_{\cC_0^0(\cO)}
  \end{displaymath}
  for $\eta \in\cC_c^0(\cO)$, $\eta \geq 0$. Now, using that
  $\psirep(\rho)$ is a positive measure for any $\rho\in\cM(\cO)$ by
  \eqref{eq:81}, we obtain
  \begin{align*}
    \tvnorm{\psirep(\mu)}
    = \sup_{\substack{\eta\in\cC_c^0(\cO)\\ \eta\in[0,1]}}\dup{\cM(\cO)}{\psirep(\mu)}{\eta}{\cC_0^0(\cO)}
    &\leq \sup_{\substack{\eta\in\cC_c^0(\cO)\\ \eta\in[0,1]}}
    \lim_{k\to\infty}\dup{\cM(\cO)}{\psirep(\mu_{n_k})}{\eta}{\cC_0^0(\cO)}\\
    &\leq \sup_{\substack{\eta\in\cC_c^0(\cO)\\ \eta\in[0,1]}}\lim_{k\to\infty}\tvnorm{\psirep(\mu_{n_k})} = \liminf_{n\to\infty} \tvnorm{\psirep(\mu_n)},
  \end{align*}
  as required.

  \textit{Step 2:} Assume now that $(u_n)_{n\in\N}\subset\Hd$, $u\in\Hd$, and
  $u_n\to u$ for $n\to\infty$. Being the only non-trivial case, we can
  assume that $(u_n)_{n\in\N}$ contains a subsequence (which we call again
  $(u_n)$) for which $(\vphi(u_{n}))_{n\in\N}$ is bounded. Thus, there are
  measures $\mu_{n}\in\cM(\cO)\cap\Hd$ such that
  \begin{displaymath}
    u_{n}(\eta)=\int_\cO \eta\ \d\mu_{n}\quad\text{for all }\eta\in \cC_c^1(\cO).
  \end{displaymath}
  By definition of $\vphi$, $\vphi(u_n) = \norm{\psirep(\mu_n)}_{TV}$, such that
  Lemma \ref{varphi-TV} implies that $\norm{\mu_{n}}_{TV}$ is bounded.
  Thus, there is $\tilde{\mu}\in\cM(\cO)$ an again nonrelabeled subsubsequence
  $(\mu_{n})_{n\in\N}$ such that $\mu_n \tows \tilde{\mu}$. For
  $\eta\in \cC_c^1(\cO)\subseteq \cC_c^0(\cO)$ we have
  \begin{displaymath}
    \int_\cO \eta\ \d\tilde{\mu}=\lim_{n\to\infty}\int_\cO \eta\
    \d\mu_{n}=\lim_{n\to\infty} u_{n}(\eta)=u(\eta) \leq \norm{u}_\Hd \norm{\eta}_{\Hsob(\cO)},
  \end{displaymath}
  so $\tilde\mu \in \cM(\cO)\cap\Hd$ and $u=\tilde{\mu}$. Using the weak{*} lower-semicontinuity
  of $\tilde{\vphi}$ from Step 1, we get
  \begin{equation}\label{eq:83}
    \vphi(u)=\tilde{\vphi}(\tilde\mu)\leq\liminf_{n\to\infty}\tilde{\vphi}(\mu_{n})=\liminf_{n\to\infty}\vphi(u_{n}).
  \end{equation}
  As this argument works for any bounded subsequence of $(u_n)_{n\in\N}$,
  \eqref{eq:83} is also true for the original sequence $(u_n)_{n\in\N}$.
\end{proof}

As one can see from the definition of the energy functional $\vphi$ in the
second part of Definition \ref{def-energy-fnal}, it has an explicit
representation on $\Hd\setminus\cM(\cO)$, where it is $\infty$, and on
$L^1(\cO)\cap\Hd$, where it is an integral. However, whenever we evaluate
$\vphi$ for general measures in $\cM(\cO)\cap\Hd$, \eg in the uniqueness part of
the proof of Theorem \ref{main-thm}, we need an approximation reducing it
to evaluations on $L^1(\cO)$ functions. This will be made precise in the
following theorem, the proof of which will take the rest of this
section.
\begin{thm}\label{approx-thm}
  Assume that $\psirep$ satisfies \eqref{eq:20} as well as Assumptions
  \ref{assumptions} \ref{item:11}, \ref{item:9}. Let $\vphi$ be defined as
  in Definition \ref{def-energy-fnal} \ref{item:2} and
  $u\in\cM(\cO)\cap\Hd$. Then there exists a sequence $u_n\in L^2(\cO)$
  such that
  \begin{align}
    \label{eq:32}&u_n \tow u\quad\text{in } \Hd,\text{ and }\\
  \label{eq:37}  &\vphi(u_n) \to \vphi(u)
  \end{align}
  for $n\to\infty$.
\end{thm}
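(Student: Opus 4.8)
Since $\psirep$ is sublinear, writing $\mu\in\cM(\cO)$ for the measure representing $u$ we have $\vphi(u)=\tvnorm{\psirep(\mu)}\le C(\abs{\cO}+\tvnorm{\mu})<\infty$, so the target value is finite. By Proposition \ref{conv+lsc} the functional $\vphi$ is convex and lower-semicontinuous on $\Hd$, hence weakly sequentially lower-semicontinuous; therefore \emph{any} sequence with $u_n\tow u$ in $\Hd$ already satisfies $\vphi(u)\le\liminf_n\vphi(u_n)$. The whole content thus reduces to producing $u_n\in L^2(\cO)$ with $u_n\tow u$ in $\Hd$ together with the matching \emph{upper} bound $\limsup_n\vphi(u_n)\le\vphi(u)$. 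I would obtain $u_n$ by first pushing $\mu$ strictly into the interior of $\cO$ (the ``shift''), then mollifying, and I would control $\vphi$ along each operation separately.

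\textbf{Mollification does not raise the energy.} Let $\nu\in\cM(\cO)$ be supported in a compact set $K\subset\cO$ and let $\rho_\eps$ be a symmetric mollifier with $\eps<\dist(K,\partial\cO)$, so that $g_\eps:=\nu\ast\rho_\eps\in\cC^\infty_c(\cO)\subset L^2(\cO)$. Extending $\nu,g_\eps$ by zero to $\R^d$ and using the representation of Remark \ref{Representation-remark} on $\R^d$, for every admissible test function $v$ one has, by Fubini and symmetry of $\rho_\eps$, $\int v\,g_\eps\,\d x=\int (v\ast\rho_\eps)\,\d\nu$, while Jensen's inequality applied to the convex function $\psirep^\ast$ (convex by Remark \ref{facts-conj-rec}) yields $\int\psirep^\ast(v\ast\rho_\eps)\,\d x\le\int\psirep^\ast(v)\,\d x$. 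Since $w:=v\ast\rho_\eps$ is again admissible, subtracting gives $\int v\,g_\eps\,\d x-\int\psirep^\ast(v)\,\d x\le\int w\,\d\nu-\int\psirep^\ast(w)\,\d x\le\tvnorm{\psirep(\nu)}$, and taking the supremum over $v$ produces $\vphi(g_\eps)=\tvnorm{\psirep(g_\eps\,\d x)}\le\tvnorm{\psirep(\nu)}=\vphi(\nu)$. Conversely $g_\eps\tows\nu$ with $\norm{g_\eps}_\Hd$ bounded uniformly in $\eps$ (via a fixed interior cutoff, using that $\nu$ is supported away from $\partial\cO$), so Step 1 of the proof of Proposition \ref{conv+lsc} gives $\vphi(\nu)\le\liminf_{\eps}\vphi(g_\eps)$. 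Hence $g_\eps\tow\nu$ in $\Hd$ and $\vphi(g_\eps)\to\vphi(\nu)$ as $\eps\to0$.

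\textbf{The inward shift.} As $\cO$ is smooth and bounded, I would fix a smooth vector field pointing strictly inward along $\partial\cO$ and let $\Phi_t$ be its flow, producing a smooth family of diffeomorphisms of $\R^d$ with $\Phi_0=\mathrm{id}$, $\Phi_t\to\mathrm{id}$ in $\cC^1$, and $\Phi_t(\overline\cO)\subset\subset\cO$ for small $t>0$ (in the spirit of \cite[Lemma A6.7]{Alt} one may instead use finitely many local translations against a boundary partition of unity). Set $\nu_t:=(\Phi_t)_\ast\mu$, supported compactly in $\cO$. For $\eta\in\cC^1_c(\cO)$ one has $\int\eta\,\d\nu_t=\int\eta\circ\Phi_t\,\d\mu$, and a chain-rule and change-of-variables estimate bounds $\norm{\eta\circ\Phi_t}_\Hsob\le C\norm{\eta}_\Hsob$ uniformly, so $\nu_t\in\cM(\cO)\cap\Hd$ with $\norm{\nu_t}_\Hd\le C\norm{\mu}_\Hd$ and $\nu_t\tows\mu$, whence $\nu_t\tow u$ in $\Hd$ as $t\to0$. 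For the energy I would use \eqref{eq:29}: the singular part is preserved \emph{exactly}, $\tvnorm{\psirep_\infty((\Phi_t)_\ast\mu^s)}=\tvnorm{\psirep_\infty(\mu^s)}$, because $\psirep_\infty$ is positively homogeneous and the polar density of $\mu^s$ is transported with unchanged modulus; for the absolutely continuous part $\mu^a=h\,\d x$ the substitution $x=\Phi_t(y)$ turns $\int\psirep(g_t)\,\d x$ into $\int\psirep\!\big(h/J_{\Phi_t}\big)J_{\Phi_t}\,\d y$, which converges to $\int\psirep(h)\,\d y$ by dominated convergence, using $J_{\Phi_t}\to1$ uniformly and the sublinear bound $\psirep(s)\le C(1+\abs{s})$ as majorant. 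Thus $\vphi(\nu_t)\to\vphi(u)$ as $t\to0$.

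\textbf{Conclusion and main obstacle.} Combining the two steps, $v_{t,\eps}:=\nu_t\ast\rho_\eps$ satisfies, for each fixed $t$ as $\eps\to0$, $v_{t,\eps}\tow\nu_t$ in $\Hd$ with $\vphi(v_{t,\eps})\to\vphi(\nu_t)$, while $\nu_t\tow u$ and $\vphi(\nu_t)\to\vphi(u)$ as $t\to0$. A diagonal choice $u_n:=v_{t_n,\eps_n}$ (testing weak convergence against a countable dense subset of $\Hsob$ and invoking the uniform $\Hd$-bounds) then gives $u_n\in L^2(\cO)$ with $u_n\tow u$ in $\Hd$ and $\vphi(u_n)\to\vphi(u)$, as required. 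The main obstacle is precisely the interaction of $\partial\cO$ with the nonlocal, nonlinear, measure-valued energy: naive mollification of $\mu$ spills mass across the boundary and wrecks both the $\Hd$-estimate and the energy identity, so the entire difficulty is concentrated in carrying out the inward shift while proving that it perturbs $\vphi$ by an arbitrarily small amount — in particular that the singular part of the energy survives the shift unchanged and that the absolutely continuous part is continuous under the near-identity push-forward.
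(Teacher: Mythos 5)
Your overall architecture --- the lower bound for free from Proposition \ref{conv+lsc}, an upper bound obtained by combining an inward shift with a Jensen-type mollification estimate, then a diagonal argument --- is exactly the architecture of the paper's proof, and your mollification step is essentially a re-proof of the cited inequality in Lemma \ref{conv-est-meas}. The gap is in the inward shift. You set $\nu_t=(\Phi_t)_\ast\mu$ and claim $\nu_t\in\cM\cap\Hd$ with $\norm{\nu_t}_\Hd\leq C\norm{\mu}_\Hd$ via the adjoint bound $\norm{\eta\circ\Phi_t}_\Hsob\leq C\norm{\eta}_\Hsob$. But $\eta\circ\Phi_t$ is \emph{not} in $\Hsob(\cO)$: since $\Phi_t$ maps $\overline\cO$ strictly into $\cO$, the boundary values of $\eta\circ\Phi_t$ are interior values of $\eta$ and are generically nonzero, so the zero-trace condition is destroyed. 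The defining property of $\cM\cap\Hd$ only controls pairings of $\mu$ against zero-trace test functions, so $\int\eta\circ\Phi_t\,\d\mu$ cannot be bounded by $C\norm{\eta}_\Hsob\norm{\mu}_\Hd$ --- and the conclusion itself is false, not merely unproved. The $\Hd$-norm of a positive measure is its Green energy $\iint G(x,y)\,\d\mu\,\d\mu$ with the \emph{Dirichlet} Green's function, which vanishes at $\partial\cO$; mass sitting very close to the boundary is therefore cheap in $\Hd$, and pushing it a fixed distance into the interior makes it cost full Riesz energy. For instance, in $d=3$ take $\mu=\sum_k 2^{-k}\sigma_k$ with $\sigma_k$ the normalised surface measure of a disc of radius $R_k=4^{-k}$ parallel to $\partial\cO$ at distance $32^{-k}$: then $\norm{\sigma_k}_\Hd^2\asymp 32^{-k}/R_k^2$ sums to give $\mu\in\cM\cap\Hd$, while after a fixed inward shift the $k$-th piece has energy $\gtrsim 1/R_k$ and $\sum_k 4^{-k}\cdot 4^{k}$ diverges, so $(\Phi_t)_\ast\mu\notin\Hd$. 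Your shifted measures thus need not lie in the space at all, let alone converge weakly in it; the same objection applies to the preliminary interior cutoff you invoke to get the uniform $\Hd$-bound for $g_\eps$.

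This is precisely why the paper never pushes the measure inward. All operations are defined dually, $\mu_{\eps,\delta}(\eta):=\mu(\rho_\delta\ast\eta_\eps)$ with $\eta_\eps=\sum_j\zeta^j\,\ext{\eta}(\cdot-\eps e^j_d)$ a partition-of-unity localised inward shift of the \emph{test function} (equivalently: the measure is translated toward the boundary and the mass that crosses $\partial\cO$ is discarded). The uniform $\Hd$- and supremum-norm bounds are then automatic from Lemmas \ref{boundedness-eps}--\ref{approx-meas-bdd}, and weak convergence follows from strong convergence of the modified test functions (Proposition \ref{conv-prop}). The price is that your clean identities (exact preservation of the singular energy, dominated convergence for the absolutely continuous part) are no longer available; instead the one-sided bound $\tvnorm{\psirep(\mu_{\eps,\delta})}\leq\tvnorm{\psirep(\mu)}$ has to be extracted from convexity of $\psirep$ across the partition of unity together with a monotone cutoff argument (Proposition \ref{TV-bound-prop}), with the matching lower bound coming from weak{*} convergence of $\psirep(\mu_n)$. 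To salvage your flow-based picture you would have to transfer the flow to the test functions rather than to the measure, at which point you are essentially reconstructing the paper's argument.
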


\begin{Cor}
  Since convex functions on a real Hilbert space are lower-semicontinuous
  if and only if they are weakly sequentially lower-semicontinuous (see \eg
  \cite[Theorem 9.1]{Bauschke}), Theorem \ref{approx-thm} implies that
  $\vphi$ is the lower-semicontinuous hull of $\vphi|_{L^2(\cO)}$ in $\Hd$,
  which means that
  \begin{equation}\label{eq:93}
    \vphi = \sup\left\{ \confn: \Hd\to[0,\infty]\ \big|\ \confn \text{ convex
        and lower-semicontinuous, } \confn|_{L^2(\cO)} \leq \vphi|_{L^2(\cO)}\right\},
  \end{equation}
  where $\sup$ denotes the pointwise supremum.
\end{Cor}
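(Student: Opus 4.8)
The plan is to verify directly the two inequalities that together give the claimed identity \eqref{eq:93}. Write $\beta_0$ for the pointwise supremum on the right-hand side of \eqref{eq:93}, i.e. the lower-semicontinuous hull of $\vphi|_{L^2(\cO)}$ in $\Hd$, and show $\vphi = \beta_0$.

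For the inequality $\vphi \le \beta_0$, I would simply observe that $\vphi$ itself is an admissible competitor in the supremum defining $\beta_0$: by Proposition \ref{conv+lsc} it is convex and lower-semicontinuous on $\Hd$, and trivially $\vphi|_{L^2(\cO)} \le \vphi|_{L^2(\cO)}$. Being one of the functions over which the supremum is taken, $\vphi \le \beta_0$ holds pointwise.

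The substantive direction is $\beta_0 \le \vphi$, for which Theorem \ref{approx-thm} is the key input. I would fix $u \in \Hd$ together with an arbitrary competitor $\beta$ (convex, lower-semicontinuous, with $\beta|_{L^2(\cO)} \le \vphi|_{L^2(\cO)}$) and aim to show $\beta(u) \le \vphi(u)$; taking the supremum over all such $\beta$ then yields $\beta_0(u) \le \vphi(u)$. If $\vphi(u) = +\infty$ there is nothing to prove, so assume $\vphi(u) < \infty$. By Definition \ref{def-energy-fnal} \ref{item:2} this forces $u \in \cM(\cO) \cap \Hd$, so Theorem \ref{approx-thm} supplies a sequence $u_n \in L^2(\cO)$ with $u_n \tow u$ in $\Hd$ and $\vphi(u_n) \to \vphi(u)$. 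Invoking the equivalence already cited in the statement \cite[Theorem 9.1]{Bauschke}, the convex lower-semicontinuous function $\beta$ is weakly sequentially lower-semicontinuous, whence $\beta(u) \le \liminf_{n} \beta(u_n)$. Since each $u_n \in L^2(\cO)$, the constraint on $\beta$ gives $\beta(u_n) \le \vphi(u_n)$, and therefore $\beta(u) \le \liminf_{n} \vphi(u_n) = \vphi(u)$.

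Combining the two inequalities gives $\vphi = \beta_0$, which is precisely \eqref{eq:93}. At the level of the Corollary there is no genuine obstacle: all the analytic difficulty has been front-loaded into the recovery-sequence construction of Theorem \ref{approx-thm} and into the convexity and lower-semicontinuity furnished by Proposition \ref{conv+lsc}. The only points demanding a little care are bookkeeping ones, namely recognising that $\vphi(u) < \infty$ already places $u$ in $\cM(\cO) \cap \Hd$ so that Theorem \ref{approx-thm} is applicable, and applying weak sequential lower-semicontinuity to the competitor $\beta$ rather than to $\vphi$ itself (applied to $\vphi$, it would only reproduce the easy inequality $\vphi \le \beta_0$).
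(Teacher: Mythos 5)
Your argument is correct and is exactly the reasoning the paper intends: the easy inequality follows because $\vphi$ is itself an admissible competitor (Proposition \ref{conv+lsc}), and the substantive inequality follows by applying weak sequential lower-semicontinuity of an arbitrary competitor $\beta$ along the recovery sequence from Theorem \ref{approx-thm}. The paper leaves these details implicit in the statement of the Corollary, so your write-up simply spells out the same proof.
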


We will approach Theorem \ref{approx-thm} by giving an explicit
construction for the sequence $(\mu_n)_{n\in\N}$, inspired by the
construction in \cite[Lemma A6.7]{Alt}. It will rely on
applying the original functional to modified functions, which is why we
first introduce several modifications to functions on $\cO$.

We next introduce further notation and recall some concepts relying on
the regularity of the boundary.
\begin{Not}\label{notation-app}
  Since the domain $\cO$ is bounded and smooth, its boundary is locally the
  graph of a smooth function. More precisely, we recall from \cite[Section
  A6.2]{Alt} that for each $y\in\partial\cO$
  there is a neighbourhood $\tilde{U}\subset\R^d$, an orthonormal system
  $e_1,\dots, e_d$ of $\R^d$, $r,h\in\R$ with $r>h>0$, and a smooth bounded
  function $g: \R^{d-1}\to\R$, such that with the notation
  \begin{displaymath}
    x_{,d} := (x_1,\dots, x_{d-1}), \quad \text{for
    }x=\sum_{i=1}^d x_ie_i,
  \end{displaymath}
  we have
  \begin{displaymath}
    \tilde{U} = \left\{ x\in\R^d: \abs{x_{,d} - y} < r \text{ and }\abs{x_d -
        g(x_{,d})} < h\right\},
  \end{displaymath}
  and for $x\in \tilde{U}$
  \begin{align*}
   x_d = g(x_{,d}) \quad&\text{ if and only if } x\in\partial\cO,\\
   x_d \in (g(x_{,d}), g(x_{,d}) + h)\quad &\text{ if and only if }
                                       x\in\cO, \text{ and}\\
   x_d \in (g(x_{,d}) - h, g(x_{,d}))\quad &\text{ if and only if } x\notin\cO.
  \end{align*}
  For technical reasons we set
  \begin{equation}\label{eq:38}
    U = \left\{x\in\tilde{U}: \abs{x_{,d}- y}<\frac{r}{2} \text{ and }\abs{x_d -
        g(x_{,d})} < \frac{h}{2} \right\}.
  \end{equation}
  The boundary $\partial\cO$ is covered by those open sets $U$ belonging to
  all possible reference points $y$. As $\partial\cO$ is compact, we can
  choose a finite subcovering $(U^j)_{j=1}^l$, and for each $U^j$, we
  denote the elements belonging to it by a superindex $j$, e.\ g.\
  $y^j, e_d^j, g^j, h^j, \tilde{U}^j$. At last, we fix an open set $U^0$ with
  $\overline{U^0}\subset \cO$, such that
  $\overline{\cO}\subset \cup_{j=0}^l U^j$ and we set $e_d^0 := 0$.

  Subordinate to the covering $\cup_{j=0}^l U^j$, let now
  $\zeta^0, \dots, \zeta^l$ be a partition of unity on $\overline{\cO}$,
  i.\ e.\
  $0\leq\zeta^j\leq 1, \zeta^j\in \cC^\infty_c(\R^d),
  \supp(\zeta^j)\subseteq U^j$ for all $j=0,\dots,l$, and
  \begin{displaymath}
    \sum_{j=0}^l \zeta^j = 1\quad \text{on }\overline\cO.
  \end{displaymath}
  For $\eta: \cO\to\R$ and $\mu\in \cM(\cO)$, we define $\ext{\eta}:
  \R^d\to\R$ and $\ext{\mu}\in \cM(\R^d)$ as the extended function (resp. measure)
  by zero. Finally, we define for $\rho\in C_c^\infty(\R^d)$ with
  \begin{equation}\label{eq:86}
    \supp(\rho)\subseteq B_1(0),\, \int_\cO \rho\, \d x=1,\ \, \rho(x) =
\rho(-x)
\end{equation}
a Dirac sequence $(\rho_\delta)_{\delta>0}\subset C_c^\infty(\R^d)$ of mollifiers by
\begin{equation}\label{eq:34}
  \rho_\delta(x)=\frac{1}{\delta^d}\,\rho\left(\frac{x}{\delta}\right).
\end{equation}
For $\eta\in L^2(\R^d), \mu\in\cM(\R^d)$, we then define functions
$\rho_\delta\ast\eta$, $\rho_\delta\ast\mu \in C^\infty(\R^d)$
by
\begin{align*}
  \rho_\delta\ast \eta(x)
  = \int_{\R^d}\rho_\delta(x-y) \eta(y)\, \d y \quad\text{and}\quad \rho_\delta\ast\mu(x) = \int_{\R^d}\rho_\delta(x-y)\d\mu(y).
\end{align*}
For brevity, we write $\rho_\delta\ast\eta := \rho_\delta\ast\ext{\eta}$
for $\eta\in L^2(\cO)$.
\end{Not}

The following construction allows to shift a function ``away from the boundary''.
\begin{Def}
  Let $\eps>0$ and $\eta: \cO\to\R$. Then we define $\eta_\eps: \cO\to\R$ by
  \begin{equation}
    \eta_\eps(x) = \sum_{j=0}^l \zeta^j(x)\ext{\eta}(x - \eps e_d^j),
  \end{equation}
  where we recall that $e_d^0$ is set to $0$.
\end{Def}

\begin{Rem} \label{stripe-rem}
  By this construction, we achieve that $\eta_\eps = 0$ on a
  $w(\eps)$-neighbourhood of $\partial\cO$ with
  \begin{equation}\label{eq:84}
    w(\eps) := \min\left\{\dist(U^0,\cO^c),
      \min_{j=1,\dots,l} \left(\min\left\{ \frac{\eps}{2}, \frac{\eps}{2L^j},
          \frac{h^j}{4}, \frac{h^j}{4L^j}\right\}\right)\right\} > 0,
  \end{equation}
  where $L^j$ denotes the Lipschitz constant of $g^j$ defined in Notations
  \ref{notation-app}.
\end{Rem}

\begin{proof}
  The number $w(\eps)$ is obviously strictly positive by the construction
  of the covering $(U^j)_{j=0}^l$. To show the support property, let $j\in\{0,1,\dots,l\}$ and
  $U^j_\eps := U^j\cap ((U^j\cap\cO)+\eps e_d^j)$. By definition,
  $\ext{\eta}(x - \eps e_d^j)=0$ if $x\in U^j\setminus U^j_\eps$. By the
  definition of $\zeta^j$, we furthermore conclude that
  $\zeta^j(x)\ext{\eta}(x - \eps e_d^j) = 0$ for $x \notin
  U^j_\eps$. Consequently,
  \begin{displaymath}
    \eta_\eps: x\mapsto \sum_{j=0}^l \zeta^j(x)\ext{\eta}(x-\eps e_d^j)
  \end{displaymath}
  is supported on
  \begin{displaymath}
    U_\eps := \bigcup_{j=0}^l U^j_\eps,
  \end{displaymath}
  such that it remains to show that $\dist(U_\eps, \cO^c)\geq w(\eps)$, or
  equivalently, that $\dist(U_\eps^j, \cO^c)\geq w(\eps)$ for all
  $j\in\{0,\dots,l\}$.

  For $j=0$, this is trivial by construction of $U_\eps^0 = U^0$ and
  $w(\eps)$. For $j=1,\dots, l$, using the
  coordinate system $(x_{,d}^j, x_d^j)$ we can rewrite
  \begin{displaymath}
    U^j_\eps = \{x\in U^j: x_d^j > g^j(x_{,d}^j) + \eps\}.
  \end{displaymath}
  Hence, we can compute for any $x\in U^j_\eps$, \ie
  $x= \left(x^j_{,d}\, ,\, g^j(x^j_{,d}) + \eps'\right)$ for some $\eps'
  \in (\eps, \frac{h^j}{2})$,
  and $y\in \partial\cO \cap {\tilde{U}}^j$
  \begin{align*}
    \norm{x-y}^2
    &= \norm{x_{,d}- y_{,d}}^2 + \abs{g(x_{,d}) + \eps' - g(y_{,d})}^2\\
    &\geq \norm{x_{,d}- y_{,d}}^2 + (\eps' - \abs{g(x_{,d}) - g(y_{,d})})^2,
  \end{align*}
  where $\norm{\cdot}$ denotes the Euclidean norm both in $\R^d$ and in
  $\R^{d-1}$.
  Letting $L^j$ be the Lipschitz constant of $g^j$, we can then argue that
  either $\norm{x_{,d}- y_{,d}} > \frac{\eps}{2L^j}$ or
  \begin{displaymath}
    \abs{g(x_{,d}) - g(y_{,d})} \leq L^j \frac{\eps}{2L^j} = \frac{\eps}{2},
  \end{displaymath}
  such that $\dist(U^j_\eps, \partial\cO\cap \tilde{U}^j)$ is at least
  $\min\left\{\frac{\eps}{2}, \frac{\eps}{2L^j}\right\}$. By similar
  arguments, we can obtain from the construction of $U^j$ in \eqref{eq:38}
  (note that $r^j>h^j$ by construction) that
  \begin{displaymath}
    \dist(U^j_\eps, (\tilde{U}^j)^c) \geq \min\left\{\frac{h^j}{4}, \frac{h^j}{4L^j}\right\},
  \end{displaymath}
  such that we conclude
  \begin{align*}
    \dist(U^j_\eps, \partial\cO)
    &= \min\{\dist(U^j_\eps, \partial\cO\cap\tilde{U}^j),
      \dist(U^j_\eps, \partial\cO\cap(\tilde{U}^j)^c)\}\\
    &\geq \min\{\dist(U^j_\eps, \partial\cO\cap\tilde{U}^j),
      \dist(U^j_\eps, (\tilde{U}^j)^c)\}\\
    &\geq \min\left\{\frac{\eps}{2}, \frac{\eps}{2L^j}, \frac{h^j}{4},
      \frac{h^j}{4L^j}\right\} \geq w(\eps).
  \end{align*}
\end{proof}

This allows to define the following approximating objects for
$u\in\cM(\cO)\cap\Hd$.
\begin{Def}\label{Def-approx-fnals}
  Let $\eps>0$, $0<\delta\leq \frac{w(\eps)}{2}$ and
  $u\in \cM(\cO)\cap\Hd$. We then define for $\eta\in{\Hsob(\cO)}$
  \begin{align}\label{eq:70}
    \begin{split}
      \tilde{u}_\eps(\eta)
      &= \dup{\Hd}{u}{\eta_\eps}{{\Hsob(\cO)}}\\
      \text{and}\quad\tilde{u}_{\eps,\delta}(\eta)
      &=\dup{\Hd}{u}{\rho_\delta\ast\eta_\eps}{{\Hsob(\cO)}}.
    \end{split}
  \end{align}
  These functionals are in $\Hd$ by Lemma \ref{boundedness-eps} and Lemma
  \ref{boundedness-eps-delta} below. For $\eta\in \cC_0^0(\cO)$, we define
  \begin{equation}\label{eq:26}
    \begin{split}
      \mu_\eps(\eta)
      &= \dup{\cM(\cO)}{\mu}{\eta_\eps}{\cC_0^0(\cO)}\\
      \text{and}\quad\mu_{\eps,\delta}(\eta)
      &=\dup{\cM(\cO)}{\mu}{\rho_\delta\ast\eta_\eps}{\cC_0^0(\cO)}.
    \end{split}
  \end{equation}
  These functionals are in $\cM(\cO)$ by Lemma \ref{approx-meas-bdd}
  below. By uniqueness of the linear
  continuation, this allows to conclude that
  \begin{displaymath}
    u_\eps, u_{\eps,\delta}\in\cM\cap\Hd, \quad\text{as well as}\quad u_\eps=
    \tilde{u}_\eps\text{ and } u_{\eps,\delta} = \tilde{u}_{\eps,\delta}.
\end{displaymath}
\end{Def}

\begin{lem} \label{boundedness-eps}
  Let $\eps>0$ and $\eta\in{\Hsob(\cO)}$. Then the map
  $\Hsob(\cO)\ni\eta\mapsto\eta_\eps\in\Hsob(\cO)$ is linear, and
  \begin{displaymath}
    \norm{\eta_\eps}_{\Hsob(\cO)} \leq C \norm{\eta}_{\Hsob(\cO)},
  \end{displaymath}
  where $C$ only depends on the localising functions $(\zeta^j)_{j=0}^l$,
  the number of covering sets $l$, the Poincaré constant of the domain
  $\cO$ and the spatial dimension $d$.
\end{lem}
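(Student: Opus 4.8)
The plan is to treat the three operations that make up $\eta\mapsto\eta_\eps$ — extension by zero, translation by $\eps e_d^j$, and multiplication by the fixed function $\zeta^j$, followed by summation over $j$ — separately and then recombine them. Linearity is immediate, since each of these operations is linear in $\eta$. For the norm estimate I would first record the one structural fact that makes everything clean: for $\eta\in\Hsob(\cO)=H^1_0(\cO)$ the extension by zero satisfies $\ext{\eta}\in H^1(\R^d)$ with $\nabla\ext{\eta}=\ext{(\nabla\eta)}$, which relies crucially on the zero-trace property (this is exactly where the $H^1_0$ rather than $H^1$ hypothesis enters). Membership $\eta_\eps\in\Hsob(\cO)$ then follows because $\eta_\eps$ is a finite sum of $H^1$ functions which, by Remark \ref{stripe-rem}, vanishes on a neighbourhood of $\partial\cO$, hence has zero trace.

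For the quantitative bound I would differentiate the defining formula by the product rule, giving on $\cO$
\[
  \nabla\eta_\eps(x) = \sum_{j=0}^l \Big( \nabla\zeta^j(x)\,\ext{\eta}(x-\eps e_d^j) + \zeta^j(x)\,(\ext{\nabla\eta})(x-\eps e_d^j) \Big),
\]
and then split the $L^2(\cO)$ norm via the triangle inequality into two families of terms. For the terms carrying $\zeta^j$ on the derivative, I would use $0\leq\zeta^j\leq 1$ together with translation invariance of the $L^2(\R^d)$ norm to obtain $\norm{\zeta^j(\ext{\nabla\eta})(\cdot-\eps e_d^j)}_{L^2(\cO)}\leq\norm{\nabla\eta}_{L^2(\cO)}=\norm{\eta}_\Hsob$. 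For the terms carrying $\nabla\zeta^j$, I would bound $\norm{\nabla\zeta^j\,\ext{\eta}(\cdot-\eps e_d^j)}_{L^2(\cO)}\leq\norm{\nabla\zeta^j}_{L^\infty}\,\norm{\eta}_{L^2(\cO)}$, again by translation invariance.

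The only genuine inequality beyond boundedness of the $\zeta^j$ is then invoked to absorb the stray factor $\norm{\eta}_{L^2(\cO)}$ into the $\Hsob$-norm: since $\eta\in H^1_0(\cO)$ and $\cO$ is bounded, the Poincaré inequality gives $\norm{\eta}_{L^2(\cO)}\leq C_P\,\norm{\nabla\eta}_{L^2(\cO)}=C_P\,\norm{\eta}_\Hsob$. Summing the $l+1$ contributions of each type yields
\[
  \norm{\eta_\eps}_\Hsob \leq (l+1)\Big(1 + C_P\,\max_{j}\norm{\nabla\zeta^j}_{L^\infty}\Big)\,\norm{\eta}_\Hsob,
\]
so the constant depends only on the quantities listed in the statement.

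There is no deep obstacle here; the estimate is essentially bookkeeping. The two points demanding care are the identity $\nabla\ext{\eta}=\ext{(\nabla\eta)}$, which is precisely where the zero-trace hypothesis is used and which guarantees that translation acts isometrically on the gradient, and the Poincaré step, which is needed because the $\Hsob$-norm controls only the gradient while the $\nabla\zeta^j$ term naturally produces the full $L^2$-norm of $\eta$.
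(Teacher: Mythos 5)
Your proposal is correct and follows essentially the same route as the paper: decompose via the partition of unity, apply the product rule, bound the $\nabla\zeta^j$ terms by $\norm{\nabla\zeta^j}_{L^\infty}\norm{\eta}_{L^2}$ and the $\zeta^j$-weighted gradient terms by translation invariance of the $L^2(\R^d)$ norm of the zero extension, and absorb the stray $L^2$ norm with Poincar\'e. Your explicit remarks on $\nabla\ext{\eta}=\ext{(\nabla\eta)}$ and on why $\eta_\eps$ has zero trace make precise two points the paper leaves implicit, but the argument is the same.
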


\begin{proof}
  The proof of the linearity claim is straightforward and therefore
  skipped. In order to prove boundedness, let
  $V^j$ = $U^j\cap \cO$ and $U^j_\eps := U^j\cap ((U^j\cap\cO)+\eps e_d^j)$
  as before. We first note
  \begin{equation}\label{eq:39}
    \norm{\eta_\eps}_{\Hsob(\cO)} = \norm{\sum_{j=0}^l \zeta^j
      \eta_\eps^j}_{\Hsob(\cO)} \leq \sum_{j=0}^l \norm{\zeta^j\eta_\eps^j}_{\Hsob(\cO)},
  \end{equation}
  where we have written
  \begin{displaymath}
    \eta_\eps^j\in H^1(\R^d), \quad \eta_\eps^j(x) = \ext{\eta}(x - \eps e^j_d).
  \end{displaymath} 
  We now analyse the summands separately, where we make use of the fact
  that for all $j\in \{1,\dots,l\}$, $\zeta^j\in\cC^\infty_c(U^j)$ and
  $\zeta^j\eta^j_\eps$ is supported on $V^j$. In the following,
  $(\partial_i)_{i=1}^d$ represent the weak partial derivatives of first
  order. We then compute for $i\in\{1,\dots,d\}$
  \begin{align*}
    \norm{\partial_i(\zeta^j \eta_\eps^j)}_{L^2(\cO)}
    =& \norm{\partial_i(\zeta^j \eta_\eps^j)}_{L^2(V^j)} \leq \norm{(\partial_i \zeta^j) \eta_\eps^j}_{L^2(V^j)} + \norm{\zeta^j
      \partial_i\eta_\eps^j}_{L^2(V^j)}\\
    &\leq C\norm{\eta_\eps^j}_{L^2(V^j)} + \left(\int_{V^j} \abs{\partial_i(\ext{\eta}(x-\eps
      e_d^j))}^2\d x\right)^{\frac1{2}}\\
    &\leq C\norm{\eta}_{L^2(\cO)} + \left(\int_{U^j_\eps} \abs{(\partial_i\eta)(x - \eps
      e_d^j)}^2\d x\right)^{\frac1{2}}\\
    &\leq C\norm{\eta}_{L^2(\cO)} + \norm{\partial_i\eta}_{L^2(\cO)}.
  \end{align*}
  This yields
  \begin{align*}
    \norm{\zeta^j\eta_\eps^j}_{\Hsob(\cO)}^2
    =& \sum_{i=1}^d \norm{\partial_i(\zeta^j \eta_\eps^j)}_{L^2(\cO)}^2
    \leq \sum_{i=1}^d \left(C\norm{\eta}_{L^2(\cO)} +
      \norm{\partial_i\eta}_{L^2(\cO)}\right)^2\\
    &\leq C\norm{\eta}_{\Hsob(\cO)}^2 + 2\sum_{i=1}^d
      \norm{\partial_i\eta}_{L^2(\cO)}^2 \leq C\norm{\eta}_{\Hsob(\cO)}^2,
  \end{align*}
  where $C$ may depend on $d, \cO$ (through the Poincare constant) and
  $\zeta^j$. Thus, we can continue \eqref{eq:39} by
  \begin{displaymath}
    \norm{\eta_\eps}_{\Hsob(\cO)} \leq \sum_{j=0}^l
    \norm{\zeta^j\eta_\eps^j}_{\Hsob(\cO)} \leq (l+1)C\norm{\eta}_{\Hsob(\cO)},
  \end{displaymath}
  as required.
\end{proof}

Concerning the mollification step, we note that by Remark
\ref{stripe-rem}, $\rho_\delta\ast\eta_\eps(x) = 0$ if
$\dist(x,\partial\cO)\leq \frac{w(\eps)}{2}$ and $0<\delta\leq \frac{w(\eps)}{2}$, so that in this case
we can restrict $\rho_\delta\ast\eta_\eps$ to $\cO$ to get a $\cC_c^1(\cO)$
function. By a slight abuse of notation, we then write
\begin{equation}\label{eq:40}
  (\rho_\delta\ast\eta_\eps)|_\cO = \rho_\delta\ast\eta_\eps \in \cC_c^1(\cO)
  \subseteq {\Hsob(\cO)}\cap\cC_0^0(\cO).
\end{equation}
Also for this step, we have to ensure linearity, which is clear,
and an estimate on the ${\Hsob(\cO)}$ norm, which is done in the following lemma.
\begin{lem} \label{boundedness-eps-delta}
  Let $\eps>0$ and $0<\delta\leq \frac{w(\eps)}{2}$. Then the map
  $\Hsob(\cO)\ni\eta\mapsto\eta_{\eps,\delta}\in\Hsob(\cO)$ is linear, and
  \begin{displaymath}
    \norm{\rho_\delta\ast\eta_\eps}_{\Hsob(\cO)} \leq C \norm{\eta}_{\Hsob(\cO)}\quad \text{for all
    }\eta\in{\Hsob(\cO)},
  \end{displaymath}
  where $C$ is the constant from Lemma \ref{boundedness-eps}.
\end{lem}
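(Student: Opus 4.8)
The plan is to reduce the estimate for $\rho_\delta\ast\eta_\eps$ to the bound for $\eta_\eps$ already established in Lemma \ref{boundedness-eps}, exploiting that mollification is nonexpansive on $L^2$ and commutes with weak differentiation. Linearity of $\eta\mapsto\rho_\delta\ast\eta_\eps$ is immediate, since $\eta\mapsto\eta_\eps$ is linear by Lemma \ref{boundedness-eps} and convolution with $\rho_\delta$ is linear. For the norm bound, the idea is to pass from $\cO$ to $\R^d$: since $\eta_\eps\in\Hsob(\cO)=H^1_0(\cO)$, its extension $\ext{(\eta_\eps)}$ by zero lies in $H^1(\R^d)$ with $\partial_i\ext{(\eta_\eps)}=\ext{(\partial_i\eta_\eps)}$ and hence $\norm{\partial_i\ext{(\eta_\eps)}}_{L^2(\R^d)}=\norm{\partial_i\eta_\eps}_{L^2(\cO)}$ for each $i$. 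Because $0<\delta\leq\frac{w(\eps)}{2}$ and $\eta_\eps$ vanishes on a $w(\eps)$-neighbourhood of $\partial\cO$ by Remark \ref{stripe-rem}, the support of $\rho_\delta\ast\ext{(\eta_\eps)}$ stays at distance at least $\frac{w(\eps)}{2}$ from $\cO^c$, so that $\rho_\delta\ast\eta_\eps$ is a genuine $\cC_c^1(\cO)$ function as in \eqref{eq:40}, and its gradient $L^2$-norm over $\cO$ coincides with the one over $\R^d$.

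Carrying this out, I would write $\norm{\rho_\delta\ast\eta_\eps}_{\Hsob(\cO)}^2=\sum_{i=1}^d\norm{\partial_i(\rho_\delta\ast\eta_\eps)}_{L^2(\cO)}^2$, use that convolution commutes with differentiation on $\R^d$, $\partial_i(\rho_\delta\ast\ext{(\eta_\eps)})=\rho_\delta\ast\partial_i\ext{(\eta_\eps)}$, and invoke Young's convolution inequality together with $\norm{\rho_\delta}_{L^1(\R^d)}=1$ to get, for each $i$,
\begin{displaymath}
  \norm{\partial_i(\rho_\delta\ast\eta_\eps)}_{L^2(\cO)} \leq \norm{\rho_\delta\ast\partial_i\ext{(\eta_\eps)}}_{L^2(\R^d)} \leq \norm{\partial_i\ext{(\eta_\eps)}}_{L^2(\R^d)} = \norm{\partial_i\eta_\eps}_{L^2(\cO)}.
\end{displaymath}
Summing over $i$ yields $\norm{\rho_\delta\ast\eta_\eps}_{\Hsob(\cO)}\leq\norm{\eta_\eps}_{\Hsob(\cO)}$, and Lemma \ref{boundedness-eps} bounds the right-hand side by $C\norm{\eta}_{\Hsob(\cO)}$ with exactly the constant $C$ of that lemma, as claimed.

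The computation itself is standard, so the only point demanding genuine care is the bookkeeping between $\cO$ and $\R^d$: one must extend by zero in order to use that the mollifier is $L^2$-nonexpansive and commutes with $\partial_i$ on the whole space, and then use the support property of Remark \ref{stripe-rem} to guarantee both that the restriction $(\rho_\delta\ast\eta_\eps)|_\cO$ is again an $\Hsob(\cO)$ function (with zero trace) and that the full $\R^d$ gradient norm equals the one over $\cO$. Since $\Hsob=H^1_0$, the zero extension automatically belongs to $H^1(\R^d)$ with no spurious distributional jump across $\partial\cO$; thus I expect the argument to be very short once the extension and support steps are recorded, and the retention of the same constant $C$ hinges solely on $\norm{\rho_\delta}_{L^1(\R^d)}=1$.
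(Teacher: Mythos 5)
Your argument is correct and follows essentially the same route as the paper: both reduce the claim to the $L^2$-nonexpansiveness of convolution with $\rho_\delta$ applied to each partial derivative $\partial_i\eta_\eps$ (the paper proves this contraction via Jensen's inequality where you cite Young's inequality with $\norm{\rho_\delta}_{L^1}=1$), use the support property of Remark \ref{stripe-rem} to handle the passage between $\cO$ and $\R^d$, and then invoke Lemma \ref{boundedness-eps} to conclude with the same constant $C$.
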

\begin{proof}
  The proof of linearity is straightforward. In order to show boundedness, for any $g\in L^2(\cO)$ such that $\rho_\delta\ast g=0$ on $\cO^c$ we can compute
  \begin{align}\label{eq:85}
    \begin{split}
      \norm{\rho_\delta\ast g}_{L^2(\cO)}^2
      =& \int_{\R^d} \left( \int_{\R^d} \rho_\delta(x-y) \ext{g}(y)\,\d
        y\right)^2 \d
      x
      \leq \int_{\R^d} \int_{\R^d} \rho_\delta(x-y) \,(\ext{g}(y))^2\,\d y \,\d x\\
      &= \int_{\R^d} \int_{\R^d} \rho_\delta(x-y) \d x\, (\ext{g}(y))^2\,\d y
      = \norm{\ext{g}}_{L^2(\R^d)}^2 = \norm{g}_{L^2(\cO)}^2,
    \end{split}
  \end{align}
  where in the second step we could apply Jensen's inequality since
  $\rho_\delta(x-y)\,\d y$ is a probability measure for each $x\in\R^d$. By
  Remark \ref{stripe-rem} for all $i\in\{1,\dots,d\}$,
  $\rho_\delta \ast (\partial_i \eta_\eps)$ vanishes outside of $\cO$ if
  $0<\delta\leq \frac{w(\eps)}{2}$. Hence $g$ in \eqref{eq:85} can be
  replaced by each partial derivative $\partial_i\eta_\eps$ which yields
  \begin{align*}
    \norm{\rho_\delta\ast\eta_\eps}_{\Hsob(\cO)}^2
    =& \sum_{i=1}^d\norm{\partial_i(\rho_\delta\ast\eta_\eps)}_{L^2(\cO)}^2
    = \sum_{i=1}^d\norm{\rho_\delta\ast
      \partial_i(\eta_\eps)}_{L^2(\cO)}^2\\
    &\leq \sum_{i=1}^d\norm{\partial_i\eta_\eps}_{L^2(\cO)}^2=
      \norm{\eta_\eps}_{\Hsob(\cO)}^2 \leq C\norm{\eta}_{\Hsob(\cO)}^2,
  \end{align*}
  where the second equality can be found \eg in \cite[Section
  2.23]{Alt} and the last inequality is the statement of Lemma
  \ref{boundedness-eps}.
\end{proof}

\begin{lem}\label{approx-meas-bdd}
  Let $\eps>0$, $0<\delta\leq \frac{w(\eps)}{2}$ and
  $\eta\in\cC_c^0(\cO)$. Then, the map
  \begin{displaymath}
    \cC_c^0(\cO)\ni\eta\mapsto(\eta_\eps, \rho_\delta\ast
  \eta_\eps)\in(\cC_c^0(\cO))^2
  \end{displaymath}
  is linear. Furthermore, we
  have
  \begin{equation}\label{eq:69}
    \norm{\rho_\delta\ast\eta_\eps}_\infty \leq \norm{\eta_\eps}_\infty\leq\norm{\eta}_\infty,
  \end{equation}
  where $\norm{\cdot}_\infty$ denotes the supremum norm.
\end{lem}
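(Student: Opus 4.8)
The plan is to read off both assertions from the elementary building blocks of the two maps. Linearity is immediate: $\eta\mapsto\eta_\eps$ is a finite sum of the zero-extension $\eta\mapsto\ext{\eta}$, the translation $\ext{\eta}\mapsto\ext{\eta}(\cdot-\eps e_d^j)$, and multiplication by the fixed cut-off $\zeta^j$, each of which is linear; post-composing with the linear convolution $g\mapsto\rho_\delta\ast g$ yields linearity of $\eta\mapsto\rho_\delta\ast\eta_\eps$ as well. The substantive points are therefore (i) that the two outputs actually lie in $\cC_c^0(\cO)$, so that the map is well-defined into $(\cC_c^0(\cO))^2$, and (ii) the two supremum-norm estimates in \eqref{eq:69}.

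For (i) I would argue as follows. Since $\eta\in\cC_c^0(\cO)$ vanishes on a neighbourhood of $\partial\cO$, its zero extension $\ext{\eta}$ is continuous on $\R^d$; hence each translate $\ext{\eta}(\cdot-\eps e_d^j)$ is continuous, and after multiplying by $\zeta^j\in\cC_c^\infty$ and summing, $\eta_\eps$ is continuous on $\cO$. By Remark \ref{stripe-rem} it is supported in $U_\eps$ with $\dist(U_\eps,\cO^c)\geq w(\eps)>0$, so $\eta_\eps\in\cC_c^0(\cO)$. The mollification $\rho_\delta\ast\eta_\eps$ is smooth and supported in the $\delta$-neighbourhood of $U_\eps$; since $\delta\leq w(\eps)/2$ this support still has distance at least $w(\eps)/2$ from $\cO^c$, so its restriction to $\cO$ lies in $\cC_c^\infty(\cO)\subseteq\cC_c^0(\cO)$, exactly as recorded in \eqref{eq:40}.

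For (ii), both inequalities are pointwise estimates exploiting the normalisations built into the construction. Using $\zeta^j\geq 0$ and $\norm{\ext{\eta}}_\infty=\norm{\eta}_\infty$, I would bound
\[
  \abs{\eta_\eps(x)}\leq\sum_{j=0}^l\zeta^j(x)\,\abs{\ext{\eta}(x-\eps e_d^j)}\leq\norm{\eta}_\infty\sum_{j=0}^l\zeta^j(x)=\norm{\eta}_\infty
\]
for $x\in\overline{\cO}$, since $\sum_{j=0}^l\zeta^j=1$ there; this is the second inequality in \eqref{eq:69}. For the first I use that $\rho_\delta\geq 0$ with $\int_{\R^d}\rho_\delta=1$ (so that $\rho_\delta(x-\cdot)\,\d y$ is a probability measure, as already exploited in Lemma \ref{boundedness-eps-delta}), whence
\[
  \abs{\rho_\delta\ast\eta_\eps(x)}\leq\int_{\R^d}\rho_\delta(x-y)\,\abs{\ext{(\eta_\eps)}(y)}\,\d y\leq\norm{\eta_\eps}_\infty.
\]
No step presents a genuine difficulty; the only point demanding a little care is the well-definedness in (i), where one must keep track of the support bookkeeping provided by Remark \ref{stripe-rem} together with the hypothesis $\delta\leq w(\eps)/2$, and observe that continuity of the zero extension hinges precisely on $\eta$ having compact support inside $\cO$.
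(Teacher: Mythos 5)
Your proof is correct and follows essentially the same route as the paper: linearity from the linearity of extension, translation, multiplication by $\zeta^j$ and convolution, the second inequality in \eqref{eq:69} from $\sum_j\zeta^j=1$ with $\zeta^j\geq 0$, and the first from $\rho_\delta\geq 0$ integrating to $1$. Your additional care about well-definedness (the support bookkeeping via Remark \ref{stripe-rem} and $\delta\leq w(\eps)/2$) is exactly what the paper invokes, just spelled out in more detail.
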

  
\begin{proof}
  The proof of the linearity claim is straightforward. In order to show
  boundedness, we first note that for $0<\delta\leq \frac{w(\eps)}{2}$,
  $\rho_\delta\ast\eta_\eps\in \cC_c^0(\cO)$ by construction and Remark
  \ref{stripe-rem}. To obtain \eqref{eq:69}, we estimate for arbitrary
  $x\in\cO$
  \begin{displaymath}
    \abs{\eta_\eps(x)} \leq \sum_{j=0}^l \zeta^j(x) \abs{\ext{\eta}(x-\eps
      e^j_d)} \leq \sum_{j=0}^l \zeta^j(x) \norm{\eta}_\infty = \norm{\eta}_\infty,
  \end{displaymath}
  which yields the second relation. The first one can be seen by
  \begin{displaymath}
    \abs{\rho_\delta\ast\eta_{\eps}(x)}\leq
    \int_{\R^d}\rho_\delta(x-y)\norm{\eta_\eps}_\infty\,\d x = \norm{\eta_\eps}_\infty,
  \end{displaymath}
  which concludes the proof.
\end{proof}

We next analyse how $\vphi$ as given in Definition \ref{def-energy-fnal}
\ref{item:2} acts on the approximating measures from Definition
\ref{Def-approx-fnals}. First, we state that if
$\mu$ is absolutely continuous with respect to the Lebesgue measure, so is
$\mu_\eps$, which we show by giving its density.
\begin{lem}\label{density-mueps}
  Let $\eps>0$, $h\in L^1(\cO)$ and $\mu := h\, \d x \in \cM(\cO)$. Then $\mu_\eps$ has the density
  \begin{displaymath}
    \cO\ni x \mapsto \sum_{j=0}^l \zeta^j(x+\eps e^j_d) \ext{h}(x+\eps e^j_d)
  \end{displaymath}
  with respect to the Lebesgue measure.
\end{lem}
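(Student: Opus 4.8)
The plan is to unwind the definition of $\mu_\eps$ from Definition \ref{Def-approx-fnals} and reduce the statement to a single change of variables in each summand defining $\eta_\eps$. For $\eta\in\cC_0^0(\cO)$ we have $\mu_\eps(\eta) = \dup{\cM(\cO)}{\mu}{\eta_\eps}{\cC_0^0(\cO)}$, and since $\mu = h\,\d x$ this pairing is nothing but $\int_\cO \eta_\eps(x)\,h(x)\,\d x$. Inserting the definition $\eta_\eps(x) = \sum_{j=0}^l \zeta^j(x)\,\ext{\eta}(x-\eps e_d^j)$ and using linearity of the integral I obtain
\begin{displaymath}
  \mu_\eps(\eta) = \sum_{j=0}^l \int_\cO \zeta^j(x)\,\ext{\eta}(x-\eps e_d^j)\,h(x)\,\d x.
\end{displaymath}

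Next I would pass, for each fixed $j$, from the integral over $\cO$ to an integral over $\R^d$ by replacing $h$ with its zero-extension $\ext{h}$ (which leaves the integral unchanged), and then perform the translation $y = x - \eps e_d^j$. Since Lebesgue measure is translation invariant, the $j$-th term turns into
\begin{displaymath}
  \int_{\R^d} \zeta^j(y+\eps e_d^j)\,\ext{\eta}(y)\,\ext{h}(y+\eps e_d^j)\,\d y.
\end{displaymath}
As $\ext{\eta}$ vanishes off $\cO$ and agrees with $\eta$ on $\cO$, this is an integral over $\cO$ with $\ext{\eta}$ replaced by $\eta$; summing over $j$ and factoring out $\eta(y)$ then yields
\begin{displaymath}
  \mu_\eps(\eta) = \int_\cO \left(\sum_{j=0}^l \zeta^j(y+\eps e_d^j)\,\ext{h}(y+\eps e_d^j)\right)\eta(y)\,\d y,
\end{displaymath}
which is exactly the asserted density tested against $\eta$. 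Since this holds for all $\eta\in\cC_0^0(\cO)$, and $\cM(\cO)$ is the dual of $\cC_0^0(\cO)$, the measure $\mu_\eps$ is uniquely determined and has the claimed Radon--Nikodym derivative.

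To finish I would check that this density indeed lies in $L^1(\cO)$, so that $\mu_\eps$ is a genuine finite absolutely continuous measure: using $0\leq\zeta^j\leq 1$ together with translation invariance, each summand has $L^1$ norm at most $\norm{h}_{L^1(\cO)}$, so the finite sum is integrable. The computation itself is entirely elementary, and the only delicate point is the careful bookkeeping of the zero-extensions through the translation. In particular, for a boundary patch the support of $\zeta^j$ may reach outside $\cO$, so one must rely on $\ext{h}$ vanishing there to ensure that the substitution does not introduce spurious contributions; this is the step where I would be most careful.
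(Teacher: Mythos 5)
Your computation is correct and is precisely the straightforward unwinding (test against $\eta$, insert the definition of $\eta_\eps$, translate each summand, use the zero-extensions to control boundary effects) that the paper has in mind when it declares this proof ``straightforward and therefore suppressed.'' The one delicate point you flag --- that $\zeta^j$ may be supported partly outside $\cO$ so that one must use $\ext{h}$ vanishing there --- is exactly the right thing to be careful about, and your handling of it is correct.
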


A more direct construction of $\mu_{\eps,\delta}$ is
given by the following lemma.
\begin{lem}\label{sec:alternative-constr}
  Let $\eps,\delta \leq\frac{w(\eps)}{2}$. Then the measure
  $\tmu_{\eps,\delta}\in\cM(\cO)$ defined by
  \begin{equation}\label{eq:72}
    \tmu_{\eps,\delta} := \left( (\rho_\delta \ast \ext{\mu})|_\cO\,\d
      x\right)_\eps,
  \end{equation}
  coincides with $\mu_{\eps,\delta}$.
\end{lem}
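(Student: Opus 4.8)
The plan is to show that $\mu_{\eps,\delta}$ and $\tmu_{\eps,\delta}$ define the same element of $\cM(\cO)=(\cC_0^0(\cO))'$. Since a finite Radon measure is determined by its action on $\cC_c^0(\cO)$ (Riesz--Markov), it suffices to check that the two functionals agree when tested against an arbitrary $\eta\in\cC_c^0(\cO)$. For such $\eta$ I would simply unwind both definitions. On the one hand, by Definition \ref{Def-approx-fnals}, and using that $\rho_\delta\ast\eta_\eps\in\cC_c^1(\cO)$ whenever $0<\delta\leq\frac{w(\eps)}{2}$ (see \eqref{eq:40}),
\begin{displaymath}
  \mu_{\eps,\delta}(\eta) = \dup{\cM(\cO)}{\mu}{\rho_\delta\ast\eta_\eps}{\cC_0^0(\cO)} = \int_\cO (\rho_\delta\ast\eta_\eps)(x)\,\d\mu(x) = \int_{\R^d}(\rho_\delta\ast\eta_\eps)(x)\,\d\ext{\mu}(x),
\end{displaymath}
the last equality holding because $\rho_\delta\ast\eta_\eps$ is supported in $\cO$ and $\ext\mu$ is the extension of $\mu$ by zero. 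On the other hand, since $\tmu_{\eps,\delta}$ is by \eqref{eq:72} the measure $\nu_\eps$ with $\nu:=(\rho_\delta\ast\ext\mu)|_\cO\,\d x$, the definition of the operation $\nu\mapsto\nu_\eps$ gives
\begin{displaymath}
  \tmu_{\eps,\delta}(\eta) = \int_\cO \eta_\eps(x)\,(\rho_\delta\ast\ext\mu)(x)\,\d x.
\end{displaymath}

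The core of the argument is to identify these two expressions, which is precisely a self-adjointness property of mollification with a symmetric kernel. I would expand the convolution in the first expression as $(\rho_\delta\ast\eta_\eps)(x)=\int_{\R^d}\rho_\delta(x-y)\eta_\eps(y)\,\d y$ and interchange the order of integration by Fubini's theorem:
\begin{displaymath}
  \int_{\R^d}\!\int_{\R^d}\rho_\delta(x-y)\eta_\eps(y)\,\d y\,\d\ext\mu(x) = \int_{\R^d}\eta_\eps(y)\left(\int_{\R^d}\rho_\delta(x-y)\,\d\ext\mu(x)\right)\d y.
\end{displaymath}
The interchange is legitimate because the integrand $(x,y)\mapsto\rho_\delta(x-y)\eta_\eps(y)$ is bounded (by Lemma \ref{approx-meas-bdd} one has $\norm{\eta_\eps}_\infty\leq\norm{\eta}_\infty$) and compactly supported, while $\ext\mu$ is a finite measure, so the product measure $\d\ext\mu\otimes\d y$ integrates it absolutely. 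Using the symmetry $\rho_\delta(x-y)=\rho_\delta(y-x)$ of the mollifier coming from \eqref{eq:86}, the inner integral equals $(\rho_\delta\ast\ext\mu)(y)$, and since $\eta_\eps$ is supported in $\cO$ this is exactly $\int_\cO\eta_\eps(y)(\rho_\delta\ast\ext\mu)(y)\,\d y=\tmu_{\eps,\delta}(\eta)$.

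As $\eta\in\cC_c^0(\cO)$ was arbitrary, the two measures coincide. I do not expect any substantial obstacle: the single point requiring care is the bookkeeping of the extensions by zero together with the support condition $\delta\leq\frac{w(\eps)}{2}$, which is what guarantees that $\rho_\delta\ast\eta_\eps$ lies in $\cC_c^1(\cO)$, so that the pairing $\dup{\cM(\cO)}{\mu}{\rho_\delta\ast\eta_\eps}{\cC_0^0(\cO)}$ is well defined and equals integration of $(\rho_\delta\ast\eta_\eps)$ against $\ext\mu$ over $\R^d$; the Fubini step itself is elementary given the finiteness of $\mu$ and the compact supports involved.
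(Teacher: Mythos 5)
Your proof is correct: the paper itself suppresses the proof of this lemma as ``straightforward'', and your argument is a complete and accurate execution of the evidently intended one, namely unwinding the two dual definitions from Definition \ref{Def-approx-fnals} and \eqref{eq:26}, applying Fubini (justified by the finiteness of $\mu$ and the compact supports), and using the symmetry $\rho(x)=\rho(-x)$ from \eqref{eq:86}, with the condition $0<\delta\leq\frac{w(\eps)}{2}$ ensuring via Remark \ref{stripe-rem} that all test functions remain supported in $\cO$. No gaps.
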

The proofs of the preceding lemmas are straightforward and therefore suppressed.

In the rest of this section, we will argue that the sequence
\begin{displaymath}
  \left(\mu_{\frac1{n},\frac1{2}w(\frac1{n})}\right)_{n\in\N}
\end{displaymath}
is an approximation of $\mu\in\cM(\cO)\cap\Hd$ in the sense of Theorem
\ref{approx-thm}. First we address the regularity of $\mu_{\eps,\delta}$,
where $\eps>0$ and
$0<\delta\leq \frac{w(\eps)}{2}$.

\begin{lem}\label{approx-in-l2}
  For all $\eps>0$, $0<\delta\leq \frac{w(\eps)}{2}$, the approximating
  measures $\mu_{\eps,\delta}$ have a bounded density with respect to
  Lebesgue measure.
\end{lem}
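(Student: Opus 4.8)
The plan is to combine the alternative representation of $\mu_{\eps,\delta}$ from Lemma~\ref{sec:alternative-constr} with the explicit density formula of Lemma~\ref{density-mueps}. By Lemma~\ref{sec:alternative-constr} we may write $\mu_{\eps,\delta} = \left((\rho_\delta \ast \ext{\mu})|_\cO\,\d x\right)_\eps$, so the task splits into showing, first, that the mollified measure $\rho_\delta \ast \ext{\mu}$ is represented on $\cO$ by a bounded function, and second, that the shift operation $(\cdot)_\eps$ preserves boundedness of a density.

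For the first step I would use that $\ext{\mu}$ is a finite Radon measure on $\R^d$ while $\rho_\delta \in \cC_c^\infty(\R^d)$ satisfies $\norm{\rho_\delta}_\infty = \delta^{-d}\norm{\rho}_\infty$. Estimating the convolution pointwise then gives, for every $x\in\R^d$,
\[
  \abs{(\rho_\delta \ast \ext{\mu})(x)}
  = \abs{\int_{\R^d} \rho_\delta(x-y)\,\d\ext{\mu}(y)}
  \leq \norm{\rho_\delta}_\infty\,\tvnorm{\mu}
  = \frac{\norm{\rho}_\infty}{\delta^d}\,\tvnorm{\mu},
\]
so that $h := (\rho_\delta \ast \ext{\mu})|_\cO$ is bounded. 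Since $\cO$ is bounded, this yields $h\in L^\infty(\cO)\subseteq L^1(\cO)$, and $h\,\d x$ is a well-defined, absolutely continuous element of $\cM(\cO)$.

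For the second step I would apply Lemma~\ref{density-mueps} to the absolutely continuous measure $h\,\d x$, obtaining that $(h\,\d x)_\eps$ has density $x\mapsto \sum_{j=0}^l \zeta^j(x+\eps e^j_d)\,\ext{h}(x+\eps e^j_d)$ with respect to Lebesgue measure. Because $0\leq\zeta^j\leq 1$ and $h$ is bounded, each of the $l+1$ summands is controlled by $\norm{h}_\infty$, so the density is bounded by $(l+1)\norm{h}_\infty \leq (l+1)\delta^{-d}\norm{\rho}_\infty\tvnorm{\mu}$. As $\mu_{\eps,\delta}=(h\,\d x)_\eps$ by Lemma~\ref{sec:alternative-constr}, the claim follows.

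I do not expect a genuine obstacle here, since the heavy lifting has been front-loaded into the two cited lemmas; the essential content is the elementary observation that mollifying a finite measure at a \emph{fixed} scale $\delta$ produces an $L^\infty$ function, after which the boundary shift is harmless. The only point requiring a little care is verifying that $h\in L^1(\cO)$ so that Lemma~\ref{density-mueps} is applicable, which is immediate from the boundedness of $\cO$; one should also note that only the finiteness of $\tvnorm{\mu}$ is used, not the membership $\mu\in\cM(\cO)\cap\Hd$.
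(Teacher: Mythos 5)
Your proposal is correct and follows essentially the same route as the paper: it combines Lemma~\ref{sec:alternative-constr} with Lemma~\ref{density-mueps} and bounds the resulting density by $\sup_{\R^d}\abs{\rho_\delta}\cdot\tvnorm{\mu}$ times the number of summands. Your write-up is merely more explicit about the intermediate step $h=(\rho_\delta\ast\ext{\mu})|_\cO\in L^\infty(\cO)\subseteq L^1(\cO)$, which the paper leaves implicit.
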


\begin{proof}
  The fact that $\mu_{\eps,\delta}$ has a density with respect to Lebesgue
  measure follows from its characterisation in Lemma
  \ref{sec:alternative-constr} and Lemma \ref{density-mueps}. This density
  is bounded in space since
  \begin{displaymath}
    \abs{\sum_{j=0}^l \zeta^j(x+\eps e^j_d) (\rho_{\frac1{2}w(\eps)} \ast
      \ext{\mu})|_\cO(x)} \leq l \sup_{x\in\cO}\abs{\rho_{\frac1{2}w(\eps)}\ast \mu(x)} \leq l
    \sup_{x\in\R^d}\abs{\rho_{\frac1{2}w(\eps)}(x)} \norm{\mu}_{TV}.
  \end{displaymath}
\end{proof}
The first part of the following proposition allows to deduce property
\eqref{eq:32}, while the second part is needed for the further proof of
\eqref{eq:37}.
\begin{prop}\label{conv-prop}
  Let $\rho$ be as in \eqref{eq:34} and $0<\delta_\eps\leq
  \frac{w(\eps)}{2}$.
  \begin{enumerate}
  \item \label{item:14} For $\eta\in{\Hsob(\cO)}$, we have
    \begin{equation}\label{eq:42}
      \rho_{\delta_\eps}\ast\eta_\eps \to \eta\quad \text{for }\eps\to 0 \text{ in }{\Hsob(\cO)}.
    \end{equation}
  \item \label{item:15} For $\eta\in\cC_c^0(\cO)$, we have
    \begin{equation}\label{eq:3}
      \rho_{\delta_\eps}\ast\eta_\eps \to \eta \quad \text{for } \eps\to 0 \text{ in }\cC_c^0(\cO).
    \end{equation}
  \end{enumerate}
\end{prop}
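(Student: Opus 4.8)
The plan is to factor the approximation operator $\eta\mapsto\rho_{\delta_\eps}\ast\eta_\eps$ into the boundary shift $\eta\mapsto\eta_\eps$ followed by the mollification $g\mapsto\rho_{\delta_\eps}\ast g$, and to show that each of these converges to the identity as $\eps\to 0$. I first record that, since $0<\delta_\eps\le\tfrac{w(\eps)}{2}$ and $w(\eps)\to 0$ as $\eps\to 0$ (the inner minimum in \eqref{eq:84} is dominated by the terms $\tfrac\eps2,\tfrac{\eps}{2L^j}$, while $\dist(U^0,\cO^c)$ is a fixed constant), we also have $\delta_\eps\to 0$, so the mollification parameter does tend to zero along the sequence. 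The uniform operator bounds already established in Lemma \ref{boundedness-eps}, Lemma \ref{boundedness-eps-delta} and Lemma \ref{approx-meas-bdd} are what allow me to decouple the two limits.

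For part \ref{item:14} I would first reduce to a dense class. Given $\eta\in\Hsob(\cO)$ and $\kappa>0$, pick $\psi\in\cC_c^\infty(\cO)$ with $\norm{\eta-\psi}_{\Hsob(\cO)}<\kappa$; using linearity and the uniform bound $\norm{\rho_{\delta_\eps}\ast(\cdot)_\eps}_{\Hsob(\cO)}\le C\norm{\cdot}_{\Hsob(\cO)}$ from Lemma \ref{boundedness-eps-delta}, the three-term split $\rho_{\delta_\eps}\ast\eta_\eps-\eta = \rho_{\delta_\eps}\ast(\eta-\psi)_\eps + (\rho_{\delta_\eps}\ast\psi_\eps-\psi) + (\psi-\eta)$ reduces \eqref{eq:42} to the case $\eta=\psi\in\cC_c^\infty(\cO)$. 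For such smooth $\psi$ the shift converges in $C^1$: since $\ext\psi(\cdot-\eps e_d^j)\to\ext\psi$ uniformly together with its derivatives, and since $\sum_{j=0}^l\zeta^j\equiv 1$ on $\cO$ forces $\sum_{j=0}^l\partial_i\zeta^j\equiv 0$ there, differentiating $\psi_\eps=\sum_j\zeta^j\,\ext\psi(\cdot-\eps e_d^j)$ and passing to the limit gives $\psi_\eps\to\psi$ and $\partial_i\psi_\eps\to\partial_i\psi$ uniformly on $\cO$, hence $\psi_\eps\to\psi$ in $\Hsob(\cO)$. For the mollification step I would split $\rho_{\delta_\eps}\ast\partial_i\psi_\eps-\partial_i\psi = \rho_{\delta_\eps}\ast(\partial_i\psi_\eps-\partial_i\psi)+(\rho_{\delta_\eps}\ast\partial_i\psi-\partial_i\psi)$, bound the first summand in $L^2(\cO)$ by $\norm{\partial_i\psi_\eps-\partial_i\psi}_{L^2(\cO)}\to 0$ using the $L^2$-contraction established in \eqref{eq:85}, and let the second summand vanish by the standard convergence of mollifications of the fixed $L^2$-function $\partial_i\psi$ as $\delta_\eps\to 0$; the zeroth-order term is handled identically.

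Part \ref{item:15} can be treated directly, without a density argument. Since $\eta\in\cC_c^0(\cO)$ is uniformly continuous, $\ext\eta(\cdot-\eps e_d^j)\to\ext\eta$ uniformly, and $\sum_j\zeta^j\equiv1$ on $\cO$ yields $\eta_\eps\to\eta$ in the supremum norm. Writing $\rho_{\delta_\eps}\ast\eta_\eps-\eta = \rho_{\delta_\eps}\ast(\eta_\eps-\eta)+(\rho_{\delta_\eps}\ast\eta-\eta)$, the first term is bounded in sup-norm by $\norm{\eta_\eps-\eta}_\infty\to 0$ via the sup-norm contraction of mollification from Lemma \ref{approx-meas-bdd}, while the second tends to zero uniformly on $\cO$ by the classical uniform convergence of the mollification of a uniformly continuous compactly supported function as $\delta_\eps\to 0$. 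As each $\rho_{\delta_\eps}\ast\eta_\eps$ lies in $\cC_c^0(\cO)$ by \eqref{eq:40}, this is convergence in $\cC_c^0(\cO)$, proving \eqref{eq:3}.

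The only genuine obstacle is that the shift and mollification parameters are coupled through $\delta_\eps\le\tfrac12 w(\eps)$, so that neither the standard mollification convergence nor the standard continuity of translations applies off the shelf. The device in both parts is to insert the intermediate object and absorb the $\eps$-dependence of the integrand using the uniform contraction and boundedness estimates, leaving a fixed function to which the classical $\delta_\eps\to 0$ limit applies. Verifying $w(\eps)\to 0$ is exactly what guarantees $\delta_\eps\to 0$ and thereby makes that last step legitimate.
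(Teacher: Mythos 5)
Your proposal is correct and follows essentially the same route as the paper: both reduce part (1) to smooth compactly supported functions via density together with the uniform operator bounds of Lemmas \ref{boundedness-eps} and \ref{boundedness-eps-delta}, and both parts ultimately rest on the uniform continuity of (the derivatives of) the approximand, the identity $\sum_j\zeta^j\equiv 1$ on $\overline{\cO}$, and the normalization of the mollifier. The only difference is organizational: you decouple the shift from the mollification by inserting the intermediate term $\rho_{\delta_\eps}\ast\partial_i\psi$ and invoking the contraction \eqref{eq:85} before applying the classical mollifier limit to a fixed function, whereas the paper estimates the combined operation in a single Jensen-type bound using the joint moduli of continuity \eqref{eq:44}--\eqref{eq:45}; the two devices are equivalent here.
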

\begin{proof}
  Throughout this proof, we will write $\delta$ instead of $\delta_\eps$, always
  assuming that $0<\delta\leq\frac{w(\eps)}{2}$.
  
  \textit{Proof of part \ref{item:14}:} It is enough to show that for all
  $i\in\{1,\dots,d\}$
  \begin{equation}\label{eq:95}
    \norm{\partial_i(\rho_\delta\ast\eta_\eps) - \partial_i\eta}_{L^2(\cO)}
    \to 0\quad \text{for }\eps\to 0.
  \end{equation}
  By the density of
  $\cC_0^\infty(\cO)$ in ${\Hsob(\cO)}$, for any $\beta>0$ we can choose $\vphi\in \cC_0^\infty(\cO)$ such that
  \begin{equation}\label{eq:43}
    \max\left\{\norm{\vphi-\eta}_{L^2(\cO)}, \norm{\partial_i\vphi - \partial_i\eta}_{L^2(\cO)}\right\} \leq
    \frac{\beta}{6(l+1)\tilde{C}},
  \end{equation}
  where
  \begin{displaymath}
    \tilde{C} := \max\left\{\max_{j=1,\dots,l}(\sup_{\R^d}
    \abs{\partial_i\zeta^j}), 1\right\}
  \end{displaymath}
  As $\ext{\vphi}, \zeta^j\in \cC^1_b(\cO)$ for each $j\in\{1,\dots,l\}$,
  we can choose $\eps_0>0$
  small enough, such that for all $x\in\R^d$ and $y,z\in B_{\eps_0}(x)$
  \begin{equation}\label{eq:44}
    \abs{\partial_i\zeta^j(y)\ext{\vphi}(z) - \partial_i\zeta^j(x)\ext{\vphi}(x)}\leq \frac{\beta}{6(l+1)\abs{\cO}^{\frac1{2}}}
  \end{equation}
  and
  \begin{equation}\label{eq:45}
    \abs{\zeta^j(y)\partial_i\ext{\vphi}(z) - \zeta^j(x)\partial_i\ext{\vphi}(x)}\leq \frac{\beta}{6(l+1)\abs{\cO}^{\frac1{2}}}.
  \end{equation}
  We approach \eqref{eq:95} by splitting the term under consideration into
  the more convenient pieces
  \begin{align*}
    &\norm{\partial_i(\rho_\delta\ast\eta_\eps) - \partial_i\eta}_{L^2(\cO)} =
      \norm{\rho_\delta\ast \partial_i\eta_\eps-\partial_i\ext{\eta}}_{L^2(\R^d)}\\
    &= \norm{\rho_\delta\ast \partial_i(\eta_\eps - \vphi_\eps) + \rho_\delta\ast \partial_i\vphi_\eps - \partial_i\ext{\vphi} + \partial_i\ext{\vphi} - \partial_i\ext{\eta}}_{L^2(\R^d)}\\
    &\leq \norm{\rho_\delta\ast \partial_i(\eta_\eps -
      \vphi_\eps)}_{L^2(\R^d)}
      + \norm{\rho_\delta\ast \partial_i\vphi_\eps -
      \partial_i\ext{\vphi}}_{L^2(\R^d)}
      + \norm{\partial_i\ext{\vphi} - \partial_i\ext{\eta}}_{L^2(\R^d)}\\
    &= \mathrm{(I)} + \mathrm{(II)} + \mathrm{(III)}.
  \end{align*}
  We estimate the summands separately. For the first one we get with the
  convolution estimate (\eg \cite[Section 2.13]{Alt})
  \begin{align}\label{eq:48}
    \begin{split}
      \mathrm{(I)}
      &\leq \norm{\partial_i(\eta_\eps - \vphi_\eps)}_{L^2(\R^d)}\\
      &= \norm{\sum_{j=0}^l \partial_i[\zeta^j (\ext{\eta}(\cdot - \eps e^j_d) -
        \ext{\vphi}(\cdot - \eps e^j_d))]}_{L^2(\R^d)}\\
      &\leq \sum_{j=0}^l \norm{\partial_i\zeta^j\left(\ext{\eta}(\cdot - \eps e^j_d) -
          \ext{\vphi}(\cdot - \eps e^j_d)\right)}_{L^2(\R^d)}\\
      &\qquad + \sum_{j=0}^l \norm{\zeta^j\left(\partial_i\ext{\eta}(\cdot - \eps e^j_d) -
          \partial_i\ext{\vphi}(\cdot - \eps e^j_d)\right)}_{L^2(\R^d)}\\
      &\leq \sum_{j=0}^l \left(\sup_{\R^d}\abs{\partial_i\zeta^j} \norm{\ext{\eta}-
          \ext{\vphi}}_{L^2(\R^d)} +  \norm{\partial_i\ext{\eta} -
          \partial_i\ext{\vphi}}_{L^2(\R^d)}\right) \leq \frac{\beta}{3},
    \end{split}
  \end{align}
  where we used \eqref{eq:43} in the last step. For the second term, we
  recall that $(\zeta^j)_{j=0}^l$ is a partition of unity on the support of
  $\vphi$. Thus, we can compute
  \begin{align*}
    \mathrm{(II)}
    &\leq \sum_{j=0}^l \norm{\rho_\delta \ast \partial_i\left(\zeta^j \ext{\vphi}(\cdot - \eps
      e^j_d)\right) - \partial_i\left(\zeta^j\ext{\vphi}\right)}_{L^2(\R^d)}\\
    &= \sum_{j=0}^l \norm{\rho_\delta\ast \left( \partial_i\zeta^j \ext{\vphi}(\cdot - \eps
      e^j_d) + \zeta^j \partial_i\ext{\vphi}(\cdot - \eps e^j_d)\right) -\partial_i\zeta^j\ext{\vphi} -
      \zeta^j \partial_i\ext{\vphi}}_{L^2(\R^d)}\\
    &\leq \sum_{j=0}^l \norm{\rho_\delta\ast (\partial_i\zeta^j
      \ext{\vphi}(\cdot - \eps e^j_d))
      - \partial_i\zeta^j\ext{\vphi}}_{L^2(\R^d)} + \sum_{j=0}^l\norm{\rho_\delta\ast (\zeta^j
      \partial_i\ext{\vphi}(\cdot-\eps e^j_d)) -
      \zeta^j \partial_i\ext{\vphi}}_{L^2(\R^d)}\\
     &=: \sum_{j=0}^l \mathrm{(IV)}_j + \sum_{j=0}^l\mathrm{(V)}_j.
  \end{align*}
  $\mathrm{(IV)}_j$ and $\mathrm{(V)}_j$ are treated analogously, so we only show
  the estimate for $\mathrm{(V)}_j$, where we choose $\eps<\frac{\eps_0}{2}$
  with $\eps_0$ as for \eqref{eq:44}. Noting that $\rho_\delta$
  integrates to $1$ for any $\delta>0$ and using Jensen's inequality in the
  second step, we obtain
  \begin{align}
    \mathrm{(V)}_j^2
    &= \int_{\R^d} \abs{\int_{\R^d}\rho_\delta(x-y)\left(\zeta^j(y) \partial_i\ext{\vphi}(y - \eps
      e^j_d) - \zeta^j(x) \partial_i\ext{\vphi}(x)\right)\d y}^2\d x\nonumber\\
    &\leq \int_{\R^d}\int_{B_\delta(x)} \rho_\delta(x-y) \abs{\zeta^j(y) \partial_i\ext{\vphi}(y - \eps
      e^j_d) - \zeta^j(x) \partial_i\ext{\vphi}(x)}^2\d y\, \d x.\label{eq:49}
  \end{align}
  As $\partial_i\ext{\vphi}$ is supported on $\cO$ and, for the analogous step for
  $\mathrm{(IV)}$, so is $\ext{\vphi}$, we can argue as in the proof of
  Remark \ref{stripe-rem} to see that the integrand of the outer integral
  is supported on $\cO$. Thus, we can restrict the integration domain to
  obtain
  \begin{align*}
    \eqref{eq:49}
    &= \int_{\cO}\int_{B_\delta(x)} \rho_\delta(x-y) \abs{\zeta^j(y) \partial_i\ext{\vphi}(y - \eps
      e^j_d) - \zeta^j(x) \partial_i\ext{\vphi}(x)}^2\d y\, \d x\\
    &\leq \int_{\cO} \frac{\beta^2}{36\,(l+1)^2\abs{\cO}}\int_{\R^d}
      \rho_\delta(x-y)\,\d y\,\d x = \left(\frac{\beta}{6(l+1)}\right)^2.
  \end{align*}
  While we have used \eqref{eq:45} in the second step, the estimate for
  $\mathrm{(IV)}_j$ uses \eqref{eq:44} instead and gets the same result. We
  conclude
  \begin{equation}\label{eq:46}
    \mathrm{(II)} = \sum_{j=0}^l \left(\mathrm{(IV)}_j +
      \mathrm{(V)}_j\right) \leq \frac{\beta}{3}.
  \end{equation}
  Finally the estimate
  \begin{equation}\label{eq:47}
    \mathrm{(III)} \leq \frac{\beta}{3}
  \end{equation}
  is obvious by property \eqref{eq:43}. Collecting \eqref{eq:48},
  \eqref{eq:46}, and \eqref{eq:47}, we obtain
  \begin{displaymath}
    \norm{\partial_i(\rho_\delta\ast\eta_\eps - \eta)}_{L^2(\cO)} \leq \beta
  \end{displaymath}
  only by choosing $\eps$ small enough and adapting
  $0<\delta\leq \frac{w(\eps)}{2}$, which proves \eqref{eq:42}.

  \textit{Proof of part \ref{item:15}:} Since $\eta$ is now assumed to be
  continuous and to have compact support, it is uniformly continuous. For
  arbitrary $\beta>0$, we can thus fix $\eps_0>0$ such that for all $x,y\in\R^d$
  \begin{displaymath}
    \abs{x-y}\leq\eps_0 \quad\text{implies}\quad \abs{\ext{\eta}(x) - \ext{\eta}(y)} \leq
    \frac{\beta}{l+1}.
  \end{displaymath}
  For $\eps\leq\frac1{2}\eps_0$, we use
  $\delta\leq\frac{w(\eps)}{2}\leq\eps$ by \eqref{eq:84} to calculate for $x\in\cO$ 
  \begin{align*}
    \abs{\rho_\delta\ast\eta_\eps(x) - \eta(x)}
    &= \abs{\int_{B_\delta(x)}
      \rho_\delta(x-y)\left(\sum_{j=0}^l\zeta^j(y)(\ext{\eta}(y-\eps e^j_d) - \eta(x))\right) \d
      y}\\
    &\leq \int_{B_\delta (x)} \rho_\delta(x-y)\sum_{j=0}^l \abs{\ext{\eta} (y - \eps e_d^j) - \ext{\eta}(x)} \d y\\
    &\leq \int_{B_\delta (x)} \rho_\delta(x-y)\sum_{j=0}^l \frac{\beta}{l+1}
      \d y = \beta,
  \end{align*}
  where for the second step we observe that for $y\in B_\delta(x)$,
  we have
  \begin{displaymath}
    \abs{(y -\eps e_d^j) -x} \leq \delta+\eps \leq 2\eps \leq \eps_0. 
  \end{displaymath}
  This proves \eqref{eq:3}.
\end{proof}

We now turn to prove Property \eqref{eq:37}. Recall the definition of a convex
function of a measure from Definition \ref{fofmu}. We need some more lemmas
on measures obtained by this technique, the first of which can be found in
\cite[Equation (2.11)]{Temam-Demengel}.

  \begin{lem}\label{conv-est-meas}
    Let $\psirep$ satisfy \eqref{eq:20} as well as conditions Assumptions
    \ref{assumptions} \ref{item:11},\ref{item:9}. Let $\mu\in\cM(\R^d)$ and
    let $(\rho_\delta)_{\delta>0}$ be a family of mollifying kernels as specified
    in \eqref{eq:86} and \eqref{eq:34}. Then
  \begin{equation}\label{eq:35}
    \int_{\R^d} \psirep(\rho_\delta\ast\mu)\,\d x \leq \int_{\R^d}\psirep(\mu)\quad
    \text{for all }\delta>0.
  \end{equation}
\end{lem}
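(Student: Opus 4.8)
The plan is to exploit the dual (convex-conjugate) representation of the convex function of a measure from Remark~\ref{Representation-remark} and to transfer the mollification from the measure $\mu$ onto the test functions. Since $\rho_\delta\ast\mu$ is a genuine $L^1(\R^d)$-function (indeed $\norm{\rho_\delta\ast\mu}_{L^1(\R^d)}\leq\tvnorm{\mu}$), the left-hand side is the convex functional applied to the absolutely continuous measure $(\rho_\delta\ast\mu)\,\d x$, so by Remark~\ref{decomp-rem} together with the $\R^d$-analogue of the representation in Remark~\ref{Representation-remark} it may be written as
\begin{displaymath}
  \int_{\R^d}\psirep(\rho_\delta\ast\mu)\,\d x=\sup_{v\in\cD_\psirep}\left\{\int_{\R^d}v\,(\rho_\delta\ast\mu)\,\d x-\int_{\R^d}\psirep^\ast(v)\,\d x\right\},
\end{displaymath}
while the right-hand side equals $\sup_{w\in\cD_\psirep}\{\int_{\R^d}w\,\d\mu-\int_{\R^d}\psirep^\ast(w)\,\d x\}$. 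If the right-hand side is $+\infty$ there is nothing to prove, so assume it is finite. It then suffices to bound the bracket on the left by $\int_{\R^d}\psirep(\mu)$ for each fixed $v\in\cD_\psirep$ and to pass to the supremum.

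Fix $v\in\cD_\psirep$. By Fubini's theorem (justified since $v$ is bounded with compact support and $\mu$ is finite) together with the symmetry $\rho_\delta(x-y)=\rho_\delta(y-x)$ from \eqref{eq:86}, the mollification moves onto $v$:
\begin{displaymath}
  \int_{\R^d}v\,(\rho_\delta\ast\mu)\,\d x=\int_{\R^d}\left(\int_{\R^d}\rho_\delta(x-y)v(x)\,\d x\right)\d\mu(y)=\int_{\R^d}(\rho_\delta\ast v)\,\d\mu.
\end{displaymath}
The function $\rho_\delta\ast v$ is smooth with compact support, hence lies in $\cC_c^0(\R^d)$. Moreover, since $\rho_\delta(y-x)\,\d x$ is a probability measure in $x$, Jensen's inequality applied to the convex function $\psirep^\ast$ gives the pointwise bound $\psirep^\ast(\rho_\delta\ast v)\leq\rho_\delta\ast(\psirep^\ast(v))$; integrating and using $\int_{\R^d}\rho_\delta=1$ yields
\begin{displaymath}
  \int_{\R^d}\psirep^\ast(\rho_\delta\ast v)\,\d x\leq\int_{\R^d}\rho_\delta\ast(\psirep^\ast(v))\,\d x=\int_{\R^d}\psirep^\ast(v)\,\d x<\infty.
\end{displaymath}
In particular $\psirep^\ast(\rho_\delta\ast v)\in L^1(\R^d)$, so $\rho_\delta\ast v\in\cD_\psirep$ and is an admissible competitor in the representation of the right-hand side.

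Combining the last three displays, for every $v\in\cD_\psirep$,
\begin{displaymath}
  \int_{\R^d}v\,(\rho_\delta\ast\mu)\,\d x-\int_{\R^d}\psirep^\ast(v)\,\d x\leq\int_{\R^d}(\rho_\delta\ast v)\,\d\mu-\int_{\R^d}\psirep^\ast(\rho_\delta\ast v)\,\d x\leq\int_{\R^d}\psirep(\mu),
\end{displaymath}
where the final inequality is the definition of $\int_{\R^d}\psirep(\mu)$ evaluated at the admissible test function $\rho_\delta\ast v$. Taking the supremum over $v\in\cD_\psirep$ and invoking the representation of the left-hand side then proves \eqref{eq:35}. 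The \emph{main obstacle} here is conceptual rather than computational: the signed nature of $\mu$ forbids a direct application of Jensen's inequality to $\rho_\delta\ast\mu$, and the essential idea is to dualise and transpose the convolution onto the test function, where Jensen may legitimately be used on the conjugate $\psirep^\ast$. The only points demanding care are the verification that $\rho_\delta\ast v$ remains admissible (which follows from the very same Jensen estimate) and the applicability of the convex-function-of-a-measure representation on $\R^d$ rather than on the bounded domain $\cO$, which is licit within the framework of \cite{Temam-Demengel}.
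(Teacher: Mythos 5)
Your argument is correct. Note, however, that the paper does not prove this lemma at all: it simply cites \cite[Equation (2.11)]{Temam-Demengel}, so there is no internal proof to compare against. What you have written is essentially the standard duality proof of that cited fact, and it holds up: transposing the convolution onto the test function via Fubini and the symmetry $\rho(x)=\rho(-x)$, applying Jensen to the convex conjugate $\psirep^\ast$ against the probability measure $\rho_\delta(y-x)\,\d x$, and checking that $\rho_\delta\ast v$ remains an admissible competitor is exactly the right mechanism, and your observation that the signed nature of $\mu$ is what forces the dualisation is the correct conceptual point. Two small remarks. First, the one nontrivial input you import is the identity $\int_{\R^d}\psirep(h)\,\d x=\sup_{v\in\cD_\psirep}\{\int vh\,\d x-\int\psirep^\ast(v)\,\d x\}$ for $h=\rho_\delta\ast\mu\in L^1(\R^d)$: the ``$\geq$'' direction is just the pointwise Fenchel inequality, but the ``$\leq$'' direction (the one you actually need) is the content of \cite[Theorem 1.1]{Temam-Demengel} in the absolutely continuous case, extended to $\R^d$ as the paper's remark following the lemma permits; it is worth flagging that this is where the external reference is genuinely used. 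Second, the finiteness of the left-hand side is automatic in the sublinear setting, since Lemma \ref{diffquot} gives $\psirep(x)\leq\psirep_\infty(1)\abs{x}$ and hence $\int_{\R^d}\psirep(\rho_\delta\ast\mu)\,\d x\leq\psirep_\infty(1)\tvnorm{\mu}$, which also confirms your claim that $\psirep(\rho_\delta\ast\mu)$ is integrable on all of $\R^d$ despite the unbounded domain.
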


\begin{Rem}
  Given the assumptions on $\psirep$,
  the theory of Definition \ref{fofmu} indeed also applies to finite
  measures on $\R^d$ (\cf \cite[p. 202]{Temam-book}).
\end{Rem}

\begin{lem} \label{extension-meas}
  Let $\psirep$ satisfy \eqref{eq:20} as well as conditions Assumptions
    \ref{assumptions} \ref{item:11},\ref{item:9}. For $\mu\in\cM(\cO)$ we have
  \begin{equation}\label{eq:36}
    \int_{\R^d}\psirep(\ext{\mu}) = \int_\cO \psirep(\mu).
  \end{equation}
\end{lem}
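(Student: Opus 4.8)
The plan is to compare the two functionals by means of the Lebesgue decomposition of $\mu$ together with the representation \eqref{eq:82}. First note that, by the remark following Lemma \ref{conv-est-meas}, the convex function of a measure is well-defined for the finite measure $\ext{\mu}$ on $\R^d$, so both sides of \eqref{eq:36} make sense. Writing $\mu = \mu^a + \mu^s$ for the Lebesgue decomposition on $\cO$ with $\mu^a = h\,\d x$, $h\in L^1(\cO)$, I would observe that extension by zero commutes with this splitting: $\ext{\mu} = \ext{h}\,\d x + \ext{\mu^s}$ is the Lebesgue decomposition of $\ext{\mu}$ on $\R^d$, since $\ext{h}\in L^1(\R^d)$ remains absolutely continuous while $\ext{\mu^s}$ stays singular (it is carried by the same Lebesgue-null set and, as $\mu\in\cM(\cO)$ lives on the open set $\cO$, puts no mass on $\partial\cO$). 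Applying \eqref{eq:82}, equivalently \eqref{eq:29}, on both $\cO$ and $\R^d$, it then suffices to show separately that $\int_{\R^d}\psirep(\ext{h})\,\d x = \int_\cO \psirep(h)\,\d x$ and $\int_{\R^d}\psirep_\infty(\ext{\mu^s}) = \int_\cO \psirep_\infty(\mu^s)$.

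For the absolutely continuous part this is immediate: since $\ext{h}$ vanishes on $\cO^c$ and $\psirep(0)=0$ by Assumption \ref{assumptions} \ref{item:11}, the integrand $\psirep(\ext{h})$ vanishes on $\cO^c$, and the integral over $\R^d$ collapses to the integral over $\cO$.

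For the singular part I would argue exactly as in the proof of Lemma \ref{varphi-TV}. By Lemma \ref{char-rec-fn}, a function $v\in\cC_c^0$ lies in $\cD_{\psirep_\infty}$ precisely when $\abs{v}\le\psirep_\infty(1)$, and then $\psirep_\infty^*(v)\equiv 0$; hence the representation of Remark \ref{Representation-remark} for the positively homogeneous, symmetric function $\psirep_\infty$ collapses to
\[
  \int_\cO \psirep_\infty(\mu^s) = \sup_{\substack{v\in\cC_c^0(\cO)\\ \abs{v}\le\psirep_\infty(1)}}\int_\cO v\,\d\mu^s = \psirep_\infty(1)\,\tvnorm{\mu^s},
\]
where the last equality uses inner regularity of $\abs{\mu^s}$ to approximate its sign by functions in $\cC_c^0(\cO)$. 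The same computation on $\R^d$ gives $\int_{\R^d}\psirep_\infty(\ext{\mu^s}) = \psirep_\infty(1)\,\tvnorm{\ext{\mu^s}}$, and since extension by zero preserves the total variation, $\tvnorm{\ext{\mu^s}} = \tvnorm{\mu^s}$. Thus both sides agree, and combining the two parts yields \eqref{eq:36}.

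The only point that requires genuine care is the singular part: one must ensure that extension by zero neither alters the Lebesgue decomposition nor the total variation, i.e.\ that no spurious singular mass is created on $\partial\cO$. This is guaranteed because $\mu$ is a measure on the open set $\cO$, so that everything else reduces to the positive homogeneity and symmetry of $\psirep_\infty$ and the normalisation $\psirep(0)=0$.
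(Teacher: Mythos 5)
Your proof is correct, but it takes a genuinely different route from the paper. The paper compares the two quantities directly at the level of their dual (supremum) representations: it writes $\int_\cO\psirep(\mu)=\sup\cD_1$ and $\int_{\R^d}\psirep(\ext{\mu})=\sup\cD_2$ with
\begin{displaymath}
  \cD_1=\left\{\textstyle\int_\cO v\,\d\mu-\int_\cO\psirep^*(v)\,\d x\right\},\qquad
  \cD_2=\left\{\textstyle\int_{\R^d}v\,\d\ext{\mu}-\int_{\R^d}\psirep^*(v)\,\d x\right\},
\end{displaymath}
and observes that extending a competitor $v$ by zero leaves both integrals unchanged (since $\psirep^*(0)=0$), while restricting a competitor on $\R^d$ to $\cO$ preserves the measure integral and can only decrease the penalty term because $\psirep^*\geq 0$. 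This gives both inequalities in a few lines, uses nothing about the growth of $\psirep$, and avoids the Lebesgue decomposition entirely. You instead split $\mu=\mu^a+\mu^s$, invoke \eqref{eq:82}/\eqref{eq:29} on both $\cO$ and $\R^d$, handle the absolutely continuous part via $\psirep(0)=0$, and reduce the singular part to the identity $\int\psirep_\infty(\mu^s)=\psirep_\infty(1)\tvnorm{\mu^s}$ via Lemma \ref{char-rec-fn}, together with the (correct) observations that extension by zero respects the Lebesgue decomposition and preserves total variation. All of these steps are sound under the stated hypotheses, and your route has the merit of making explicit where the mass of each part of $\mu$ ends up; the price is that it leans on the sublinearity (so that $\psirep_\infty(1)<\infty$ and the recession functional is a multiple of the TV norm) and on more of the Temam--Demengel machinery than is actually needed, whereas the paper's supremum-swapping argument is shorter and structurally more robust.
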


\begin{proof}
  We define
  \begin{displaymath}
    \cD_1 := \left\{\int_\cO v\,\d\mu -
      \int_\cO \psirep^*(v)\,\d x: v\in L^1(\mu), \psirep^*(v) \in L^1(\cO) \right\}
  \end{displaymath}
  and
  \begin{displaymath}
    \cD_2 := \left\{\int_{\R^d}v\,\d\ext{\mu} -
        \int_{\R^d}\psirep^*(v)\,\d x: v\in L^1(\ext{\mu}), \psirep^*(v) \in L^1(\R^d) \right\},
    \end{displaymath}
    which allows us to write
    \begin{displaymath}
      \int_\cO \psirep(\mu) = \sup \cD_1\quad \text{and}\quad \int_{\R^d}\psirep(\ext{\mu}) = \sup\cD_2.
    \end{displaymath}
  We note that for $v$ satisfying the conditions of $\cD_1$, $\ext{v}$
  satisfies the conditions of $\cD_2$, while the involved integrals
  agree due to the definition of $\ext{\mu}$ and $\psirep^*(0) = 0$. This
  yields ``$\geq$''.

  Conversely, for $v$ satisfying the conditions of $\cD_2$ we can define
  $\tilde{v} = v|_\cO$. $\tilde{v}$ satisfies the conditions of
  $\cD_1$. Furthermore, we have
  \begin{align*}
    \int_\cO \tilde{v}\,\d\mu &= \int_{\R^d}v\,\d\ext{\mu}\quad\text{and}\\
    \int_\cO \psirep^*(\tilde{v})\,\d x &\leq \int_{\R^d} \psirep^*(v)\,\d x\quad\text{due
                                to } \psirep^*\geq 0.
  \end{align*}
  Thus, we have found an element in $\cD_1$ being larger than or equal to
  \begin{displaymath}
    \int_{\R^d}v\,\d\ext{\mu} - \int_{\R^d}\psirep^*(v)\,\d x,
  \end{displaymath}
  which yields ``$\leq$'', completing the proof.
\end{proof}

The key tool to prove the approximation property \eqref{eq:37} is the
following proposition.
\begin{prop}\label{TV-bound-prop}
  Let $\eps>0$ and $0<\delta\leq \frac{w(\eps)}{2}$.
  Then,
  \begin{equation}\label{eq:33}
    \norm{\psirep(\mu_{\eps,\delta})}_{TV} \leq \norm{\psirep(\mu)}_{TV}.
  \end{equation}
\end{prop}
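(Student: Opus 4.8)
The plan is to factor the construction of $\mu_{\eps,\delta}$ into its two elementary operations---mollification-after-extension and the boundary shift---and to show that neither of them increases the quantity $\int_\cO\psirep(\cdot)$, which by Remark \ref{Representation-remark} equals $\norm{\psirep(\cdot)}_{TV}$. By Lemma \ref{approx-in-l2} the measure $\mu_{\eps,\delta}$ is absolutely continuous, so by Remark \ref{decomp-rem} it carries no singular part and $\norm{\psirep(\mu_{\eps,\delta})}_{TV}=\int_\cO\psirep(k)\,\d x$, where $k$ denotes its density. Using the alternative description $\mu_{\eps,\delta}=(g\,\d x)_\eps$ with $g:=(\rho_\delta\ast\ext{\mu})|_\cO$ from Lemma \ref{sec:alternative-constr} together with the density formula of Lemma \ref{density-mueps}, I would write
\begin{equation*}
  k(x)=\sum_{j=0}^l \zeta^j(x+\eps e^j_d)\,\ext{g}(x+\eps e^j_d).
\end{equation*}
The goal then splits into the two inequalities $\int_\cO\psirep(k)\,\d x\le\int_\cO\psirep(g)\,\d x$ (the shift step) and $\int_\cO\psirep(g)\,\d x\le\norm{\psirep(\mu)}_{TV}$ (the mollification step).

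The mollification step is the routine one and is exactly what Lemmas \ref{conv-est-meas} and \ref{extension-meas} are made for. Since $\psirep\ge 0$, restricting the integral of $\psirep(\rho_\delta\ast\ext{\mu})$ from $\R^d$ to $\cO$ can only decrease it, so
\begin{equation*}
  \int_\cO\psirep(g)\,\d x\le\int_{\R^d}\psirep(\rho_\delta\ast\ext{\mu})\,\d x
  \le\int_{\R^d}\psirep(\ext{\mu})=\int_\cO\psirep(\mu)=\norm{\psirep(\mu)}_{TV},
\end{equation*}
where the second inequality is Lemma \ref{conv-est-meas} applied to $\ext{\mu}\in\cM(\R^d)$, the following equality is Lemma \ref{extension-meas}, and the last is Remark \ref{Representation-remark}.

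The shift step is where I expect the real work to lie. Fixing $x\in\cO$ and abbreviating $a_j:=\zeta^j(x+\eps e^j_d)\in[0,1]$, I want to treat $k(x)=\sum_j a_j\,\ext{g}(x+\eps e^j_d)$ by Jensen's inequality. Because $\psirep$ is convex with $\psirep(0)=0$, the bound $\psirep(k(x))\le\sum_j a_j\,\psirep(\ext{g}(x+\eps e^j_d))$ holds provided $\sum_j a_j\le 1$, the defect $1-\sum_j a_j$ being absorbed into the value $\psirep(0)=0$. Granting this, a change of variables $z=x+\eps e^j_d$ in the $j$-th summand, together with $\psirep(\ext{g})=\psirep(g)\Ind{\cO}$ (again using $\psirep(0)=0$), yields
\begin{equation*}
  \int_\cO\psirep(k)\,\d x\le\sum_{j=0}^l\int_{(\cO+\eps e^j_d)\cap\cO}\zeta^j(z)\,\psirep(g(z))\,\d z
  =\int_\cO\psirep(g(z))\sum_{j=0}^l\zeta^j(z)\,\Ind{\cO+\eps e^j_d}(z)\,\d z\le\int_\cO\psirep(g)\,\d z,
\end{equation*}
the final inequality being immediate from $\sum_{j=0}^l\zeta^j=1$ on $\overline{\cO}$ and the indicators being bounded by $1$. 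Combining the two steps proves the claim.

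The crux, and the only place using more than soft convexity, is therefore the pointwise estimate $\sum_{j=0}^l\zeta^j(x+\eps e^j_d)\le 1$ needed for Jensen. This is a genuinely geometric statement: the analogous inequality for \emph{arbitrary} translations of the pieces $\zeta^j\ext{g}$ is false for convex $\psirep$ (overlapping translated bumps strictly increase $\int\psirep$), so it must exploit that the shifts run along the inward normals $e^j_d$ (cf.\ Remark \ref{stripe-rem}). I would extract it from the construction of the partition of unity in Notations \ref{notation-app}, arranging the $\zeta^j$ so that $\sum_j\zeta^j\le 1$ holds on all of $\R^d$ and each boundary function $\zeta^j$ ($j\ge 1$) is non-increasing along its inward normal direction, i.e.\ $\zeta^j(x+\eps e^j_d)\le\zeta^j(x)$; since $e^0_d=0$, summing then gives $\sum_j\zeta^j(x+\eps e^j_d)\le\sum_j\zeta^j(x)\le 1$. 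Checking that Alt's graph-based construction can be normalised to enjoy these two properties is the main technical point I would need to pin down; alternatively, a dual argument via the representation of $\int_\cO\psirep(\cdot)$ using $\psirep^*$ and $v\mapsto v_\eps$ proceeds symmetrically but runs into the very same inequality.
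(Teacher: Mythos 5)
Your argument is correct and structurally it is the paper's proof: the paper also reduces \eqref{eq:33} to exactly the two inequalities you isolate, treats the mollification step verbatim via Lemmas \ref{conv-est-meas} and \ref{extension-meas} together with $\psirep\geq 0$, and opens the shift step with the same convexity estimate $\psirep\bigl(\sum_j\zeta^j(x+\eps e^j_d)\ext{h}(x+\eps e^j_d)\bigr)\leq\sum_j\zeta^j(x+\eps e^j_d)\,\psirep(\ext{h}(x+\eps e^j_d))$ applied to the density from Lemma \ref{density-mueps} and the representation of Lemma \ref{sec:alternative-constr}. The only divergence is in how the shift step is then closed: you change variables and use $\sum_j\zeta^j(z)\Ind{\cO+\eps e^j_d}(z)\leq 1$ on $\cO$, whereas the paper first multiplies by cut-offs $\psiexh_\alpha\in\cC^0_c(\cO)$ chosen monotonically increasing along each $e^j_d$, exploits $\psiexh_\alpha(x-\eps e^j_d)\leq\psiexh_\alpha(x)$ after the change of variables, and passes to the limit $\alpha\to 0$; your version is shorter and yields the same conclusion. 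As for the point you single out as the crux: you are right that the Jensen step needs $\sum_{j=0}^l\zeta^j(x+\eps e^j_d)\leq 1$ (for convex $\psirep$ with $\psirep(0)=0$ the inequality can genuinely fail when the weights sum to more than $1$, by Lemma \ref{diffquot}), but you should be aware that the paper's proof uses the identical step and justifies it only by appeal to ``the construction of $(\zeta^j)_{j=0}^l$'', while Notations \ref{notation-app} only guarantee $\sum_j\zeta^j=1$ on $\overline{\cO}$ and say nothing about the differently shifted arguments. So this is not a gap of your proof relative to the paper's, but a normalisation that must be built into the partition of unity in either version; your proposal ($\sum_j\zeta^j\leq 1$ on all of $\R^d$, which the standard quotient construction provides, plus monotonicity of each $\zeta^j$ along its own $e^j_d$, which is compatible with the column structure of the charts $U^j$ since translation by $e^j_d$ fixes the tangential coordinate $x^j_{,d}$) is a natural way to secure it.
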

\begin{proof}
  Recall Notations \ref{notation-app} and let $V^j$ =
  $U^j\cap \cO$. Let $(\psiexh_\alpha)_{\alpha>0}\subset C^0_c(\R^d)$ be a sequence of
non-negative cut-off functions compactly supported in $\cO$, which converge
to $1$ pointwise in $\cO$ for $\alpha\to 0$, and each of which is monotonically increasing
on each $V^j$ in $e^j_d$ direction.

Let $h\in L^1(\cO)$ and $\mu = h\,\d x$. In the following argument, we
will need $\psiexh_\alpha(x)\geq \psiexh_\alpha(x-\eps e_d^j)$ for
$x\in V^j$, where $x-\eps e_d^j$ is not a priori in $\cO$. However, since
$\psiexh_\alpha=0$ outside of $\cO$, it is clear that the statement is
valid even if $x-\eps e_d^j\notin\cO$. By the convexity of $\psirep$, the
construction of $(\zeta^j)_{j=0}^l$ and Lemma \ref{density-mueps}, we then
estimate
\begin{align}
  \nonumber\int_\cO \psiexh_\alpha \psirep(\mu_\eps)
  &= \int_\cO \psiexh_\alpha(x)\, \psirep\left(\sum_{j=0}^l \zeta^j(x+\eps e^j_d)
    \ext{h}(x+\eps e^j_d)\right) \d x\\
  \nonumber&\leq \int_\cO \psiexh_\alpha(x) \sum_{j=0}^l \zeta^j(x+\eps e^j_d)
    \psirep(\ext{h}(x+\eps e^j_d))\ \d x\\
  \nonumber&= \int_{\R^d} \psiexh_\alpha(x) \sum_{j=0}^l \zeta^j(x+\eps e^j_d)
    \psirep(\ext{h}(x+\eps e^j_d))\ \d x\\
  \label{eq:96}&= \int_{\R^d}  \psirep(\ext{h}(x))\sum_{j=0}^l \psiexh_\alpha(x-\eps e^j_d) \zeta^j(x) \d x.
\end{align}
We note that $\sum_{j=0}^l \psiexh_\alpha(x-\eps e^j_d) \zeta^j(x)$ is
supported on $\cO$ by Remark \ref{stripe-rem}. Furthermore, by the
construction of $\xi_\alpha$, we have
\begin{displaymath}
  \xi_\alpha(x-\eps e^j_d) \leq \xi_\alpha(x)
\end{displaymath}
for all $x\in V^j$, so this holds especially for $x\in\cO$ for which
$\zeta^j(x) > 0$. Thus, we can continue
\begin{align}\label{eq:100}
  \begin{split}
    \eqref{eq:96} =& \sum_{j=0}^l \int_\cO \psiexh_\alpha(x-\eps e^j_d)
    \zeta^j(x) \psirep(h(x))\,\d x \leq \int_\cO \sum_{j=0}^l\zeta^j(x)\psiexh_\alpha(x) \psirep(h(x))\ \d x\\
    &= \int_\cO \psiexh_\alpha(x) \psirep(h(x))\ \d x = \int_\cO
    \psiexh_\alpha\,\psirep(\mu).
  \end{split}
\end{align}

For a positive Radon measure $\mu$, we have
$\mu(\cO) = \sup\{\mu(K): K\subseteq\cO \text{ compact}\}$. Since any such
$K$ is included in
\begin{displaymath}
  K_\alpha := \{x\in\cO: \mathrm{dist}(x, \cO^c) \geq \alpha\}
\end{displaymath}
for $\alpha$ small enough, we can as well write
$\mu(\cO) = \lim_{\alpha\to 0} \mu(K_\alpha)$. Then, noting that
$\psiexh_\alpha \geq \Ind{K_\alpha}$, we can argue
by definition of the Radon measure of compact sets that
\begin{displaymath}
  \mu(\cO) \geq \int_\cO\psiexh_\alpha\d\mu \geq \mu(K_\alpha) \overset{\alpha\to
    0}{\longrightarrow} \mu(\cO),
\end{displaymath}
thus $\mu(\cO) = \lim_{\alpha\to 0} \int_\cO \psiexh_\alpha \d\mu$.

Hence, we conclude by \eqref{eq:100} for $\mu= h\,\d x$, $h\in L^1(\cO)$, that
\begin{equation}\label{eq:31}
    \int_\cO \psirep(\mu_\eps)= \lim_{\alpha\to 0} \int_\cO \psiexh_\alpha \psirep(\mu_\eps)
    \leq \lim_{\alpha\to 0} \int_\cO \psiexh_\alpha \psirep(\mu) = \int_\cO
    \psirep(\mu).
\end{equation}
Using \eqref{eq:31}, Lemma \ref{conv-est-meas} and Lemma
\ref{extension-meas}, we then obtain for $0<\delta\leq\frac{w(\eps)}{2}$
\begin{align*}
  \int_\cO \psirep(\mu_{\eps,\delta})
  =& \int_\cO \psirep(((\rho_\delta\ast\ext{\mu})|_\cO\,\d x)_\eps) 
  \leq \int_\cO \psirep((\rho_\delta\ast\ext{\mu})|_\cO)\,\d x\\
  &= \int_{\R^d} \psirep(\rho_\delta\ast\ext{\mu})\Ind{\cO}\,\d x
  \leq  \int_{\R^d} \psirep(\rho_\delta\ast\ext{\mu})\,\d
                            x \leq \int_{\R^d} \psirep(\ext{\mu}) = \int_\cO \psirep(\mu),
\end{align*}
which finishes the proof.
\end{proof}

\begin{Cor}\label{bddness-cor}
  Together with Remark \ref{Representation-remark}, Proposition
  \ref{TV-bound-prop} immediately implies
  \begin{displaymath}
    \limsup_{\eps\to 0} \int_\cO \psirep(\mu_{\eps,\delta_\eps}) \leq \int_\cO \psirep(\mu)
  \end{displaymath}
  as long as $0<\delta_\eps \leq \frac{w(\eps)}{2}$.
\end{Cor}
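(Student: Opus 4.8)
The plan is to observe that this corollary is essentially a bookkeeping exercise: it follows by chaining Remark \ref{Representation-remark} with Proposition \ref{TV-bound-prop}, since both the approximating object and its limit are governed by one and the same total variation norm. So I would not attempt any new estimate, but merely assemble the two facts already established.

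First I would fix an arbitrary $\eps>0$ together with any admissible $\delta_\eps$ satisfying $0<\delta_\eps\leq\frac{w(\eps)}{2}$. Since $\mu_{\eps,\delta_\eps}\in\cM(\cO)$ by Definition \ref{Def-approx-fnals}, Remark \ref{Representation-remark} allows me to rewrite the convex functional of the measure as a total variation norm, namely $\int_\cO \psirep(\mu_{\eps,\delta_\eps}) = \norm{\psirep(\mu_{\eps,\delta_\eps})}_{TV}$, and likewise $\int_\cO \psirep(\mu) = \norm{\psirep(\mu)}_{TV}$. The constraint $0<\delta_\eps\leq\frac{w(\eps)}{2}$ is precisely the hypothesis under which Proposition \ref{TV-bound-prop} is applicable, so I then obtain $\norm{\psirep(\mu_{\eps,\delta_\eps})}_{TV}\leq\norm{\psirep(\mu)}_{TV}$, and hence $\int_\cO \psirep(\mu_{\eps,\delta_\eps})\leq\int_\cO \psirep(\mu)$ for every such $\eps$.

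Finally, I would note that the right-hand side $\int_\cO \psirep(\mu)$ is a fixed quantity independent of $\eps$. Since the inequality holds for every admissible $\eps>0$, passing to the $\limsup$ as $\eps\to 0$ preserves it and yields the assertion. I do not expect any genuine obstacle here; the only point deserving a moment's attention is that the chosen $\delta_\eps$ must respect the bound $\tfrac{w(\eps)}{2}$ so that Proposition \ref{TV-bound-prop} indeed applies, which is guaranteed by the standing hypothesis $0<\delta_\eps\leq\frac{w(\eps)}{2}$.
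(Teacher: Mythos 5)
Your argument is exactly the paper's: the corollary carries no separate proof there, being stated as an immediate consequence of Proposition \ref{TV-bound-prop} (which gives the $\eps$-uniform bound $\norm{\psirep(\mu_{\eps,\delta_\eps})}_{TV}\leq\norm{\psirep(\mu)}_{TV}$) combined with the identification $\int_\cO\psirep(\cdot)=\norm{\psirep(\cdot)}_{TV}$ from Remark \ref{Representation-remark}, after which the $\limsup$ of a family bounded by an $\eps$-independent constant is trivially bounded by that constant. Your proposal is correct and complete.
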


\begin{proof}[Proof of Theorem \ref{approx-thm}.]
  For $\mu$ as in Theorem \ref{approx-thm}, we show that the sequence
  \begin{displaymath}
    (\mu_n)_{n\in\N} := \left(\mu_{\frac1{n},\frac1{2}w(\frac1{n})}\right)_{n\in\N},
  \end{displaymath}
  where $w$ was defined in Remark \ref{stripe-rem}, meets all
  requirements.

  By construction, $\mu_n\in\cM(\cO)\cap\Hd$ for all $n\in\N$, and by Lemma
  \ref{approx-in-l2}, the density of $\mu_n$ is bounded and thus in
  $L^2(\cO)$. Property \eqref{eq:32} is proved in the first part of Proposition
  \ref{conv-prop}. For Property \eqref{eq:37}, note that Corollary
  \ref{bddness-cor} especially shows that $(\mu_n)_{n\in\N}$ is uniformly
  bounded in the TV norm, which means that it contains a subsequence that
  converges weakly{*} to $\psirep(\mu)$ by Proposition \ref{conv-prop} and
  \cite[Lemma 2.1]{Temam-Demengel}. Since this argument can be carried out
  for any subsequence, we get weak{*} convergence for the whole sequence
  and, also by \cite[Lemma 2.1]{Temam-Demengel},
  \begin{displaymath}
    \norm{\psirep\left(\mu_{\frac1{n},\frac1{2}w(\frac1{n})}\right)}_{TV} := \int_\cO
    \psirep\left(\mu_{\frac1{n},\frac1{2}w(\frac1{n})}\right) \to \int_\cO \psirep(\mu) :=
    \norm{\psirep(\mu)}_{TV} \quad \text{as } n\to\infty.
  \end{displaymath}
  This yields \eqref{eq:37} and thereby concludes the proof.
\end{proof}

\section{Proof of the main result} \label{sec:proof-existence}

Throughout this section, we work under Assumptions \ref{assumptions}. We
mostly sketch the argument, which is closely along the lines of \cite[Proof
of Theorem 2.3]{G-R}, and only give more details for the parts where
additional results are needed due to the different
nonlinearity.

We consider the SPDE
\begin{align}\label{eq:8}
  \begin{split}
    \d X^\eps_t &= \eps\Delta X^\eps_t \d t + \Delta\phi^\eps(X^\eps_t)\d
    t + B(t,X^\eps_t)\d W_t,\\
    X_0^\eps &= x_0,
  \end{split}
\end{align}
where we use the notation for the Yosida approximation of \cite[Apppendix C]{G-R} and assume $x_0\in L^2(\Om,\cF_0; L^2)$. Now and in the
following we omit the domain $\cO$ when using Lebesgue and Sobolev spaces.
\begin{lem}
  For all $T>0$, Problem \eqref{eq:8} gives rise to a solution in sense of
  \cite[Definition 4.2.1]{Roeckner} with respect to the Gelfand triple
  $V :=L^2\hookrightarrow \Hd \hookrightarrow (L^2)' = V'$.
\end{lem}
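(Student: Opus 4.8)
The plan is to cast \eqref{eq:8} into the variational framework of \cite{Roeckner} and apply the general existence and uniqueness theorem for variational SPDEs \cite[Theorem 4.2.4]{Roeckner} with the Gelfand triple $L^2\hookrightarrow\Hd\hookrightarrow(L^2)'$, the drift operator $A^\eps(v)=\eps\Delta v+\Delta\phi^\eps(v)$ and the diffusion $B$. The one genuinely non-standard point is the meaning of $\Delta$ in a triple whose pivot space is $\Hd$ rather than $L^2$. I claim that for $w,z\in L^2$ the natural realisation of $\Delta$ as a map $L^2\to(L^2)'$ satisfies
\[
  \dup{(L^2)'}{\Delta w}{z}{L^2} = -\sp{w}{z}_{L^2}.
\]
Indeed, since the $(L^2)'$--$L^2$ duality extends $\sp{\cdot}{\cdot}_\Hd$ and $\sp{u}{v}_\Hd=\dup{\Hd}{u}{(-\Delta)^{-1}v}{\Hsob}$, one computes $\dup{(L^2)'}{\Delta w}{z}{L^2}=\sp{\Delta w}{z}_\Hd=-\dup{\Hd}{(-\Delta)w}{(-\Delta)^{-1}z}{\Hsob}$, and since $(-\Delta)$ is the Riesz isomorphism $\Hsob\to\Hd$ this equals $-\sp{\nabla w}{\nabla(-\Delta)^{-1}z}_{L^2}$; testing the defining relation $-\Delta(-\Delta)^{-1}z=z$ against $w$ then gives $-\sp{w}{z}_{L^2}$. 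In particular $\Delta$ is dissipative on this triple and $\norm{\Delta w}_{(L^2)'}=\norm{w}_{L^2}$, which renders the four structural hypotheses almost immediate.

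With this identity in hand I would verify the hypotheses (H1)--(H4) of \cite[Theorem 4.2.4]{Roeckner}. For hemicontinuity (H1), the map $\lambda\mapsto\dup{(L^2)'}{A^\eps(u+\lambda v)}{w}{L^2}=-\eps\sp{u+\lambda v}{w}_{L^2}-\sp{\phi^\eps(u+\lambda v)}{w}_{L^2}$ is continuous on $\R$ because the Yosida approximation $\phi^\eps$ is Lipschitz on $L^2$. For weak monotonicity (H2), the identity together with the monotonicity of $\phi^\eps$ (with $0\in\phi^\eps(0)$) gives $\dup{(L^2)'}{A^\eps(u)-A^\eps(v)}{u-v}{L^2}=-\eps\norm{u-v}_{L^2}^2-\sp{\phi^\eps(u)-\phi^\eps(v)}{u-v}_{L^2}\le 0$, which combined with the Lipschitz bound \eqref{eq:22} on $B$ in $\Hd$ yields the required estimate. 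For coercivity (H3), the same computation shows $\dup{(L^2)'}{A^\eps(v)}{v}{L^2}\le-\eps\norm{v}_{L^2}^2$, and combined with $\norm{B(t,v)}_{L_2(U,\Hd)}^2\le C(1+\norm{v}_\Hd^2)$ (obtained from \eqref{eq:22} and \eqref{eq:73}) this is coercivity with exponent $\alpha=2$. Finally, boundedness (H4) follows from $\norm{A^\eps(v)}_{(L^2)'}\le\eps\norm{v}_{L^2}+\norm{\phi^\eps(v)}_{L^2}\le C(1+\norm{v}_{L^2})$, again by the linear growth of the Lipschitz map $\phi^\eps$.

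The theorem then produces a unique solution in the sense of \cite[Definition 4.2.1]{Roeckner}, with progressive measurability of $A^\eps(\cdot)$ and $B$ inherited from their continuity and the assumptions on $B$. The whole argument runs closely parallel to \cite[Proof of Theorem 2.3]{G-R}, the only point requiring care being the dissipativity identity above, which takes over the role of the coercivity estimate used there for the original nonlinearity. I expect this interpretation of $\Delta$ in the $\Hd$-pivoted triple to be the main conceptual, rather than computational, obstacle; once it is established, every remaining estimate is routine.
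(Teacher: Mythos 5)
Your proposal is correct and follows essentially the same route as the paper, which simply observes that the operator $A(u)=\Delta(\eps u+\phi^\eps(u))$ fits the framework of \cite[Example 4.1.11]{Roeckner} and invokes \cite[Theorem 4.2.4]{Roeckner}; your duality identity $\dup{(L^2)'}{\Delta w}{z}{L^2}=-\sp{w}{z}_{L^2}$ and the verification of (H1)--(H4) are exactly the content of that example, spelled out. No discrepancy to report.
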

\begin{proof}
  One shows that \eqref{eq:8} fits into the framework of \cite[Example 4.1.11]{Roeckner} with the operator
  \begin{displaymath}
    A(u) = \Delta(\eps u + \phi^\eps(u))\quad \text{for }u\in L^2.
  \end{displaymath}
  The statement then follows by \cite[Theorem 4.2.4]{Roeckner}.
\end{proof}

The following lemma provides an important estimate on the regularity of
these approximate solutions and corresponds to \cite[Lemma B.1]{G-R}:
\begin{lem}\label{Regularity-Lemma}
  Let $\eps>0$, $x_0 \in L^2(\Om, \cF_0; L^2)$ and $T>0$. Then for the
  solution $(X_t^\eps)_{t\in[0,T]}$ to \eqref{eq:8} we have
  \begin{displaymath}
    \E \sup_{t\in[0,T]} \norm{X^\eps_t}_2^2 + \eps \E  \int_0^T
    \norm{X^\eps_r}^2_{H^1_0}\d r \leq C(\E\norm{x_0}_2^2 + 1)
  \end{displaymath}
  with a constant $C>0$ independent of $\eps$.
\end{lem}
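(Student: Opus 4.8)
The plan is to derive the estimate from an energy balance obtained by applying Itô's formula to $\norm{X^\eps_t}_{L^2}^2$, exploiting the dissipativity furnished by the viscous term $\eps\Delta X^\eps$. Although the solution of the preceding lemma is constructed in the triple $L^2\hookrightarrow\Hd\hookrightarrow(L^2)'$, the regularising operator $\eps\Delta$ renders $A(u)=\Delta(\eps u+\phi^\eps(u))$ coercive as a map $\Hsob\to\Hd$: since $\phi^\eps$ is monotone and Lipschitz with $\phi^\eps(0)=0$, one has $\phi^\eps(u)\in\Hsob$ with $\nabla\phi^\eps(u)=(\phi^\eps)'(u)\nabla u$, and using the distributional identity $\dup{\Hd}{\Delta w}{u}{\Hsob}=-\sp{\nabla w}{\nabla u}_{L^2}$ for $w,u\in\Hsob$ one finds
\begin{displaymath}
  \dup{\Hd}{A(u)}{u}{\Hsob} = -\eps\norm{u}_{\Hsob}^2 - \int_\cO (\phi^\eps)'(u)\,\abs{\nabla u}^2\,\d x \leq -\eps\norm{u}_{\Hsob}^2 .
\end{displaymath}
Thus \eqref{eq:8} also falls within the variational framework with respect to the finer triple $\Hsob\hookrightarrow L^2\hookrightarrow\Hd$, so that $X^\eps\in L^2([0,T]\times\Om;\Hsob)$ and the variational Itô formula (\cf \cite[Theorem 4.2.5]{Roeckner}) may be applied to $\norm{X^\eps_t}_{L^2}^2$.

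Discarding the nonpositive term $-\int_\cO(\phi^\eps)'(X^\eps_r)\abs{\nabla X^\eps_r}^2\,\d x$, the energy balance becomes
\begin{align*}
  \norm{X^\eps_t}_{L^2}^2 + 2\eps\int_0^t\norm{X^\eps_r}_{\Hsob}^2\,\d r
  \leq{}& \norm{x_0}_{L^2}^2 + \int_0^t \norm{B(r,X^\eps_r)}_{L_2(U,L^2)}^2\,\d r\\
  &{}+ 2\int_0^t \sp{X^\eps_r}{B(r,X^\eps_r)\,\d W_r}_{L^2}.
\end{align*}
The sublinear growth bound \eqref{eq:25} controls the second term by $C\int_0^t(1+\norm{X^\eps_r}_{L^2}^2)\,\d r$, with $C$ independent of $\eps$.

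I would then take the supremum over $t\in[0,T]$ and the expectation. The martingale term is handled by the Burkholder--Davis--Gundy inequality, which bounds it by $C\,\E\big(\int_0^T \norm{X^\eps_r}_{L^2}^2\,\norm{B(r,X^\eps_r)}_{L_2(U,L^2)}^2\,\d r\big)^{1/2}$; inserting \eqref{eq:25} and applying Young's inequality lets me absorb a term $\tfrac12\E\sup_{t\le T}\norm{X^\eps_t}_{L^2}^2$ into the left-hand side, the remainder being dominated by $C\,\E\int_0^T(1+\norm{X^\eps_r}_{L^2}^2)\,\d r$. A Gronwall argument yields $\E\sup_{t\le T}\norm{X^\eps_t}_{L^2}^2\leq C(\E\norm{x_0}_{L^2}^2+1)$, and feeding this back into the energy balance bounds $\eps\,\E\int_0^T\norm{X^\eps_r}_{\Hsob}^2\,\d r$ as well. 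Since the constants from \eqref{eq:25}, BDG, Young and Gronwall carry no $\eps$-dependence, and the viscous contribution appears only with a favourable sign, the resulting $C$ is independent of $\eps$, as required.

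The main obstacle I anticipate is the rigorous justification of Itô's formula for the $L^2$-norm: a priori the solution is only known to live in the coarser triple $L^2\hookrightarrow\Hd\hookrightarrow(L^2)'$, and one must argue that the viscosity genuinely promotes it to $L^2([0,T]\times\Om;\Hsob)$ so that the $L^2$-valued Itô formula is licit. Everything downstream --- the dissipativity computation, the growth estimate, and the BDG/Gronwall machinery --- is routine by comparison.
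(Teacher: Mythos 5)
Your energy computation (drop the monotone term $-\int(\phi^\eps)'(X^\eps)\abs{\nabla X^\eps}^2$, use \eqref{eq:25}, BDG, Young, Gronwall) is exactly the computation the paper performs, but the step you yourself flag as the ``main obstacle'' is a genuine gap, and your proposed fix does not close it. You justify the It\^o formula for $\norm{X^\eps_t}_{L^2}^2$ by asserting that \eqref{eq:8} ``falls within the variational framework'' for the finer triple $\Hsob\hookrightarrow L^2\hookrightarrow\Hd$, but you only verify coercivity of $A$. The weak-monotonicity hypothesis of that framework with $H=L^2$ requires a bound of the form $\norm{B(t,u)-B(t,v)}^2_{L_2(U,L^2)}\leq C\norm{u-v}^2_{L^2}+2\eps\norm{u-v}^2_{\Hsob}$, and nothing of this kind is assumed: $B$ is Lipschitz only in $\Hd$ (condition \eqref{eq:22}), while \eqref{eq:25} is merely a growth bound in $L^2$. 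So existence of a variational solution in the finer triple cannot be invoked, and even if it could, you would still have to identify that solution with the $X^\eps$ produced in the coarser triple. Applying the It\^o formula of \cite[Theorem 4.2.5]{Roeckner} directly to $X^\eps$ instead requires knowing a priori that $X^\eps\in L^2([0,T]\times\Om;\Hsob)$ --- which is (up to the factor $\eps$) part of the conclusion of the lemma, so the argument is circular as it stands.

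The paper avoids this by running your computation one level down: it takes the Galerkin approximations $X^n$ in the span of the first $n$ eigenvectors of $-\Delta$ (through which $X^\eps$ is constructed in \cite{Roeckner} as a weak limit), applies the \emph{finite-dimensional} It\^o formula and BDG there to get the uniform bound
$\E\sup_{r\leq T}\norm{X^n_r}_{L^2}^2+\eps\,\E\int_0^T\norm{X^n_r}_{\Hsob}^2\,\d r\leq C(\E\norm{x_0}_{L^2}^2+1)$,
and then transfers the estimate to $X^\eps$ by extracting weakly (resp.\ weak{*}) convergent subsequences in $L^2(\Om\times[0,T];\Hsob)$ and $L^2(\Om;L^\infty([0,T];L^2))$, identifying the limits with $X^\eps$, and using weak lower-semicontinuity of the norms. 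If you replace your direct appeal to the infinite-dimensional It\^o formula by this approximation-and-lower-semicontinuity step, the rest of your argument goes through unchanged and the constants remain $\eps$-independent for the reasons you give.
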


\begin{proof}[Proof of Lemma \ref{Regularity-Lemma}]
  Let $(e_i)_{i\in\N}\subset \cC^2_0$ be an orthonormal basis in $\Hd$ of
  smooth eigenvectors to $-\Delta$, and let
  $P^n: \Hd\to H_n := \mathrm{span}\{e_1,\dots,e_n\}$ be the orthogonal
  projection onto the span of the first $n$ eigenvectors. Recall that the
  unique variational solution $X^\eps$ to (\ref{eq:8}) is constructed in
  \cite{Roeckner} as a (weak) limit in $L^2([0,T]\times\Om; L^2)$ of the
  solutions to the Galerkin approximation
  \begin{eqnarray*}
    \d X^n_t &=& \eps P^n \Delta X^n_t \d t + P^n\Delta\phi^\eps(X^n_t)\d
                  t + P^n B(t,X^n_t)\d W^n_t\\
  \nonumber X^n_0 &=& P^nx_0,
  \end{eqnarray*}
  in $H_n$, where for simplicity we omit the $\eps$-dependence of $X^n$,
  and for an orthonormal basis $(g_i)_{i\in\N}$ of $U$ (as defined in
  Assumption \ref{assumptions} \ref{item:16}) we let
  \begin{displaymath}
    W^n_t = \sum_{i=1}^n \sp{J^{-1}(W_t)}{g_i}_U g_i.
  \end{displaymath}
  Using the finite-dimensional Ito formula and the Burkholder-Davis-Gundy
  inequality, one shows the energy estimate
  \begin{displaymath}
    \E\sup_{r\in[0,T]}\norm{X_r^n}_{L^2}^2 +
    \eps\,\E\int_0^T \norm{X_r^n}_\Hsob^2\d r \leq C(\E\norm{x_0}_{L^2}^2
    + 1).
  \end{displaymath}
  Thus, $(X^n)_{n\in\N}$ is bounded in $L^2(\Om; L^\infty([0,T];L^2))$ and in
  $L^2(\Om\times[0,T];\Hsob)$. The latter is a Hilbert space, thus we
  can extract a weakly converging subsequence whose limit can be
  identified with the unique weak $L^2(\Om\times[0,T];L^2)$ limit
  $X^\eps$. The former is the dual space of
  $L^2(\Om; L^1([0,T]; L^2))$ which is separable. Thus, we can extract
  a weak{*} converging subsequence whose limit can again be identified
  with $X^\eps$. By weak (respectively weak{*}) lower-semicontinuity of the norms,
  we can thus pass to the limit $n\to\infty$ to obtain the required
  inequality.
\end{proof}

\begin{proof}[Proof of Theorem \ref{main-thm}.]
  \textit{Existence:} Let $(x_0^n)_{n\in\N}\subset L^2(\Om,\cF_0;L^2)$ such
  that $x_0^n \to x_0$ in $L^2(\Om; \Hd)$, and let
  $X^{\eps_1,n}, X^{\eps_2,n}$ be the solutions to \eqref{eq:8} with
  initial state $x_0^n$ for $\eps_1, \eps_2 > 0$. By the Ito formula on
  $e^{-Kt}\norm{X_t^{\epsi,n} - X_t^{\epsii,m}}_\Hd^2$, the
  Burkholder-Davis-Gundy inequality and \cite[Equation (C.5)]{G-R}, we
  obtain
\begin{align}\label{eq:11}
  \begin{split}
    \E \sup_{t\in[0,T]} \left(e^{-Kt} \norm{X_t^{\epsi,n} -
        X_t^{\epsii,m}}_\Hd^2\right) \leq& \ 2\,\E\norm{x_0^n - x_0^m}_\Hd^2 \\
    &+
    C(\epsi + \epsii)\left(\E\norm{x_0^n}_{L^2}^2 + \E\norm{x_0^m}_{L^2}^2
      + 1\right)
  \end{split}
\end{align}
for $K>0$ large enough. Letting first $\eps\to 0$ and then $n\to\infty$
yields a limit $X \in L^2(\Om; \cC([0,T]; \Hd)$ by completeness, which will
be shown to be an SVI solution. To this end, define with the notation of
\cite[Appendix C]{G-R}
\begin{equation}\label{eq:101}
  \vphi^\eps(v) =
  \begin{cases}
    \int_\cO \psirep^\eps(v) \d x,\quad &v\in L^2,\\
    + \infty, &\text{otherwise,}
  \end{cases}
\end{equation}
for $v\in\Hd$. Using the Ito formula on $e^{-Kt}\norm{X_t^{\eps,n}}_\Hd^2$ and the fact
that $-\Delta \phi^\eps(x) \in \partial\vphi^\eps(x)$ for $x\in\Hsob$, one obtains
\begin{equation}\label{eq:65}
  \E\int_0^t\vphi^\eps(X_r^{\eps,n})\,\d r \leq C + \E\norm{x_0^n}_\Hd^2 \leq
  \tilde{C} 
\end{equation}
for some $C, \tilde{C}>0$ independent of $\eps$ and $n$. Together with
Assumption \ref{assumptions} \ref{item:12}, which allows to use
\cite[Equation (C.4)]{G-R}, and the lower-semicontinuity of $\vphi$ from
Proposition \ref{conv+lsc}, one obtains
part \ref{item:6} of Definition \ref{Def-SVI}.

For the variational inequality part, let $G, Z, t$ be as in Definition
\ref{Def-SVI} \ref{item:7}. Using Ito's formula on $\norm{X^{\eps, n}_t -
  Z_t}_\Hd^2$, \eqref{eq:22} and the weighted Young inequality we obtain
\begin{align}\label{existence-est}
  &\E\norm{X^{\eps, n}_t - Z_t}_\Hd + 2 \E\int_0^t\vphi^\eps(X^{\eps,
  n}_r)\,\d r \nonumber\\
  \leq& \ \E\norm{x_0^n - Z_0}_\Hd^2 + 2\E\int_0^t\vphi^\eps(Z_r)\,\d r\\
  &- 2\E\int_0^t\sp{G_r}{X^{\eps, n}_r -Z_r}_\Hd \d r +
    C\E\int_0^t\norm{X^{\eps, n}_r - Z_r}_\Hd^2\d r\nonumber\\
  &+ 2\E\int_0^t \frac1{2}\eps^{\frac{4}{3}}\norm{\Delta X^{\eps, n}_r}_\Hd^2 +
    \frac1{2}\eps^{\frac{2}{3}}\norm{X^{\eps, n}_r - Z_r}_\Hd^2\d r.\nonumber
\end{align}
Then using \cite[Equations (C.3) and (C.4)]{G-R}, the
lower-semicontinuity of $\vphi$ from Proposition \ref{conv+lsc} and Lemma \ref{Regularity-Lemma}, we can
take first $\liminf_{\eps\to 0}$ and then $\liminf_{n\to\infty}$ to obtain
\eqref{eq:4} and therefore the remaining part \ref{item:7} of Definition
\ref{Def-SVI}.

\textit{Uniqueness:} It remains to show that the solution constructed in the previous
step is unique. To this end, let $x_0,y_0 \in L^2(\Om,\cF_0;\Hd)$,
$(y_0^n)_{n\in\N}\subset L^2(\Om,\cF_0;L^2)$ satisfying $y_0^n \to y_0$ in
$L^2(\Om;\Hd)$ for $n\to\infty$. Let $X$ be an arbitrary SVI solution to
\eqref{eq:1} with initial condition $x_0$ and let
$(Y^{\eps, n})_{\eps>0,n\in\N}$ be the solutions to \eqref{eq:8} with
respective initial conditions $(y_0^n)_{n\in\N}$. One can check that
\begin{equation}\label{eq:75}
Z = Y^{\eps, n}\quad \text{and}\quad G = \eps \Delta
Y^{\eps, n} + \Delta \phi^\eps (Y^{\eps, n})
\end{equation}
are admissible choices
for \eqref{eq:4}. Then, \eqref{eq:4} yields for $t\in[0,T]$
\begin{align}\label{eq:18}
  \begin{split}
    \E\norm{X_t - Y^{\eps,n}_t}_\Hd^2 + 2\E\int_0^t\vphi(X_r)\,\d r
    &\leq \E\norm{x_0 - y_0^n}_\Hd^2 +
    2\E\int_0^t\vphi(Y^{\eps,n}_r)\,\d r\\
    &- 2\E\int_0^t\sp{\eps\Delta Y^{\eps,n}_r + \Delta
      \phi^\eps(Y^{\eps,n}_r)}{X_r - Y^{\eps,n}_r}_\Hd\d r\\
    &+ C\E\int_0^t\norm{X_r - Y^{\eps,n}_r}_\Hd^2\d r.
  \end{split}
\end{align}
For $u\in L^2$ and $\vphi^\eps$ as in \eqref{eq:101}, we have
\begin{equation}\label{eq:15}
  \sp{-\Delta\phi^\eps(Y^{\eps,n})}{u-Y^{\eps,n}}_\Hd +
  \vphi^\eps(Y^{\eps,n}) \leq \vphi^\eps(u)\quad \d
  t\otimes\P\text{-a.\ e.}
\end{equation}
Since $Y^{\eps,n}\in \Hsob\subset L^2$ $\d
t\otimes\P$-a.\ e.\ we can use \cite[Equation (C.4)]{G-R}
to obtain $\d t\otimes\P$-almost everywhere
\begin{equation}\label{eq:110}
  \abs{\vphi^\eps(Y^{\eps,n}) - \vphi(Y^{\eps,n})} \leq
  C\eps\left(1+ \norm{Y^{\eps,n}}_{L^2}^2\right).
\end{equation}
Using \ref{eq:110} and \cite[Equation (C.3)]{G-R}, we can modify
\eqref{eq:15} to obtain
\begin{equation}\label{eq:16}
  \sp{-\Delta\phi^\eps(Y^{\eps,n})}{u-Y^{\eps,n}}_\Hd +
  \vphi(Y^{\eps,n}) \leq \vphi(u) + C\eps\left(1+ \norm{Y^{\eps,n}}_{L^2}^2\right)\quad \d
  t\otimes\P\text{-a.\,e.}.
\end{equation}
Note that \eqref{eq:16} is trivial if $\vphi(u) = \infty$. Furthermore,
\eqref{eq:16} can be deduced analogously for $u\in L^{m+1}\cap\Hd$ in the
superlinear setting, \ie when $\vphi$ is given by \eqref{eq:76}, with $m$
as in Assumption \ref{assumptions} \ref{item:13}. In the sublinear setting,
\ie $\vphi$ is given by \eqref{eq:77}, and $u\in\cM(\cO)\cap\Hd$, we
consider the approximating sequence $(u_j)_{j\in\N}\subset L^2$ given by
Theorem \ref{approx-thm}, such that \eqref{eq:16} is satisfied for all
$u_j, j\in\N$. We then pass to the limit $j\to\infty$ and notice that by
the construction of
$(u_j)_{j\in\N}$ we have both
$\vphi(u_j) \to \vphi(u)$ and
\begin{eqnarray*}
  \sp{-\Delta\phi^\eps(Y^{\eps,n})}{u_j-Y^{\eps,n}}_\Hd&&\\
  = \dup{\Hsob}{\phi^\eps(Y^{\eps,n})}{u_j-Y^{\eps,n}}{\Hd}
  &\longrightarrow& \dup{\Hsob}{\phi^\eps(Y^{\eps,n})}{u-Y^{\eps,n}}{\Hd}\\
  &&= \sp{-\Delta\phi^\eps(Y^{\eps,n})}{u-Y^{\eps,n}}_\Hd.
\end{eqnarray*}
Consequently, replacing $u$ by $X$ in \eqref{eq:16}, we have in any case
\begin{equation}\label{eq:88}
  \sp{-\Delta\phi^\eps(Y^{\eps,n})}{X-Y^{\eps,n}}_\Hd +
  \vphi(Y^{\eps,n}) \leq \vphi(X) + C\eps\left(1+ \norm{Y^{\eps,n}}_{L^2}^2\right)\quad \d
  t\otimes\P\text{-a.\,e.}.
\end{equation}
Using \eqref{eq:88} and the weighted Young inequality, we can modify
\eqref{eq:18} to obtain for $t\in[0,T]$
\begin{align*}
  \E\norm{X_t - Y^{\eps,n}_t}_\Hd^2
  \leq&\ \E\norm{x_0-y_0^n}_\Hd^2\\
  &+ 2\E\int_0^t\eps^{\frac{4}{3}}\norm{\Delta Y^{\eps,n}_r}_\Hd^2\d r
    + \eps^{\frac{2}{3}}\norm{X_r - Y^{\eps,n}_r}_\Hd^2\d r\\
  &+ C\E\int_0^t\norm{X_r - Y^{\eps,n}_r}_\Hd^2\d r +
    C\eps\E\int_0^t\left(1+ \norm{Y^{\eps,n}_t}_{L^2}^2\right)\d r.
\end{align*}
Taking $\eps\to 0$ and then $n\to\infty$ yields
\begin{equation}\label{eq:89}
  \E\norm{X_t -Y_t}_\Hd^2 \leq \E\norm{x_0-y_0}_\Hd^2 +
  C\E\int_0^t\norm{X_r - Y_r}_\Hd^2\d r\quad\text{for }t\in[0,T].
\end{equation}
where $Y$ is the SVI solution constructed from
$(Y^{\eps,n})$ by the limiting procedure at the beginning of this
proof. Gronwall's inequality then yields $X=Y$ if $x_0=y_0$, and thus
uniqueness of SVI solutions. Then, estimate \eqref{eq:19} follows by
applying Gronwall's inequality to \eqref{eq:89} with different initial
values, which concludes the proof.
\end{proof}

\appendix

\section{Generalities on convex functions}
We collect and prove some statements on convex functions
defined on $\R$.

\begin{lem}\label{diffquot}
  Let $f: \R\to[0,\infty)$ be convex with $f(0)= 0$ and
  $x,y\in\R\setminus\{0\}$ with $x<y$. Then
  \begin{equation}\label{eq:79}
    \frac{f(x)}{x} \leq \frac{f(y)}{y}.
  \end{equation}
  In particular, for $x> 0$ this implies $f(x)\leq f(y)$.
\end{lem}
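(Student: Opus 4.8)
The plan is to reduce the claim to the monotonicity of the secant slope $t\mapsto f(t)/t$ emanating from the origin, which is where the hypothesis $f(0)=0$ enters. I would organise the argument as a case distinction according to the signs of $x$ and $y$: the direction of the inequality changes under division by $x$ or $y$, and the cross-over at $0$ behaves differently from the two same-sign regimes, so treating them separately is cleanest.

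First, in the regime $0<x<y$ I would write the intermediate point as the convex combination $x = \tfrac{y-x}{y}\cdot 0 + \tfrac{x}{y}\cdot y$, whose weights lie in $(0,1)$ precisely because $0<x<y$. Convexity together with $f(0)=0$ then gives $f(x)\le \tfrac{x}{y}f(y)$, and dividing by $x>0$ yields $\tfrac{f(x)}{x}\le\tfrac{f(y)}{y}$. Symmetrically, in the regime $x<y<0$ I would write $y=\tfrac{y}{x}\cdot x+(1-\tfrac{y}{x})\cdot 0$, where the weights again lie in $(0,1)$ since $0<|y|<|x|$; convexity and $f(0)=0$ give $f(y)\le\tfrac{y}{x}f(x)$, and dividing now by $y<0$ reverses the inequality and again produces $\tfrac{f(x)}{x}\le\tfrac{f(y)}{y}$.

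The remaining case $x<0<y$ needs no convexity at all: since $f\ge 0$, we have $\tfrac{f(x)}{x}\le 0\le\tfrac{f(y)}{y}$, so \eqref{eq:79} is immediate. For the ``in particular'' clause, assuming $x>0$ forces $0<x<y$, so the first regime applies; I would then combine the slope inequality with $x<y$ and $\tfrac{f(y)}{y}\ge 0$ to estimate $f(x)=x\cdot\tfrac{f(x)}{x}\le x\cdot\tfrac{f(y)}{y}\le y\cdot\tfrac{f(y)}{y}=f(y)$.

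I do not expect a genuine obstacle here; the only points demanding care are the bookkeeping of the inequality direction when dividing by a negative number in the case $x<y<0$, and verifying that the convex-combination weights indeed lie in $(0,1)$ in each same-sign regime. It is worth noting that the nonnegativity hypothesis $f\ge 0$ is used only to dispose of the cross-over case $x<0<y$ and the final ``in particular'' statement, whereas the slope monotonicity within each same-sign regime rests on convexity and $f(0)=0$ alone.
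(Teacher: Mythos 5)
Your proof is correct and follows essentially the same route as the paper: the same three-case split according to the signs of $x$ and $y$, with the two same-sign cases handled by the scaling inequality $f(\lambda t)\le\lambda f(t)$ (which you phrase as an explicit convex combination with $0$) and the cross-over case dispatched by nonnegativity. Your explicit verification of the ``in particular'' clause is a small addition the paper leaves implicit, but there is no substantive difference.
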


\begin{proof}
  Note that by convexity, we have for $\lambda\in (0,1)$, $x\in\R$
  \begin{equation}\label{eq:97}
    f(\lambda x) = f(\lambda x + (1-\lambda)0) \leq \lambda f(x) +
    (1-\lambda)f(0) = \lambda f(x).
  \end{equation}
  If $x<0<y$, the statement is obvious by the nonnegativity of
  $f$. If $0<x<y$, we use \eqref{eq:97} with $\lambda = \frac{x}{y}$ to get
  \begin{displaymath}
    \frac{f(x)}{x} = \frac{f(\lambda y)}{\lambda y} \leq \frac{\lambda
      f(y)}{\lambda y} = \frac{f(y)}{y},
  \end{displaymath}
  while for $x<y<0$ we use \eqref{eq:97} with $\lambda := \frac{y}{x}$ to get
  \begin{displaymath}
    \frac{f(y)}{y} = \frac{f(\lambda x)}{\lambda x} \geq \frac{\lambda
      f(x)}{\lambda x} = \frac{f(x)}{x},
  \end{displaymath}
  as required.
\end{proof}

\begin{lem}\label{boundconj}
  Let $\psirep$ satisfy Assumptions \ref{assumptions} and $y>0$. Then, if $\psirep(y) > 0$, we have
  \begin{displaymath}
    \psirep^*(-x) = \psirep^*(x) \leq \psirep(y)\quad \text{for }x\in\left[0,\frac{\psirep(y)}{y}\right],
  \end{displaymath}
  where $\psirep^*$ is defined as in Definition \ref{Def-conj-rec}.
\end{lem}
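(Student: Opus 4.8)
The plan is to reduce to the case $x\ge 0$ by symmetry and then bound the supremum defining $\psirep^*(x)$ by means of the affine lower bound on $\psirep$ recorded just after Assumption \ref{item:9}.

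First, the identity $\psirep^*(-x)=\psirep^*(x)$ is immediate: since $\psirep$ is symmetric by Assumption \ref{assumptions} \ref{item:11}, its convex conjugate is symmetric as well, as already noted in Remark \ref{facts-conj-rec}. Hence it suffices to prove $\psirep^*(x)\le\psirep(y)$ for $x\in[0,k]$, where I abbreviate $k:=\frac{\psirep(y)}{y}>0$, positive because $\psirep(y)>0$ and $y>0$.

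Next I would invoke the consequence of convexity stated immediately after Assumption \ref{item:9}, namely $\psirep(z)\ge k\abs{z}-\psirep(y)$ for all $z\in\R$. Inserting this into the definition $\psirep^*(x)=\sup_{z\in\R}\left(xz-\psirep(z)\right)$ from Definition \ref{Def-conj-rec} gives, for every $z\in\R$, the estimate $xz-\psirep(z)\le xz-k\abs{z}+\psirep(y)$. For $x\in[0,k]$ one has $xz\le x\abs{z}\le k\abs{z}$ (using $x\ge 0$ for the first inequality and $x\le k$ for the second), so that $xz-k\abs{z}\le 0$ and therefore $xz-\psirep(z)\le\psirep(y)$. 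Taking the supremum over $z$ yields $\psirep^*(x)\le\psirep(y)$, as claimed.

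There is no genuine obstacle here; the only point requiring care is the \emph{uniform} (in $z$) bound $xz-k\abs{z}\le 0$, which hinges on splitting the factor $x\le k$ from the sign constraint $x\ge 0$. Should one prefer not to cite the affine lower bound as a black box, it can be re-derived directly from Lemma \ref{diffquot}: for $\abs{z}\ge y$ that lemma gives $\psirep(z)=\psirep(\abs{z})\ge k\abs{z}\ge k\abs{z}-\psirep(y)$, while for $\abs{z}\le y$ the right-hand side satisfies $k\abs{z}-\psirep(y)\le ky-\psirep(y)=0\le\psirep(z)$, so both regimes are covered and the estimate holds for all $z$.
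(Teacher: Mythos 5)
Your proof is correct. It takes a genuinely different route from the paper's, though both are elementary. The paper first reduces the claim to the single endpoint $x=\psirep(y)/y$: since $\psirep^*$ is symmetric, nonnegative, convex and vanishes at $0$, the last part of Lemma \ref{diffquot} applied to $\psirep^*$ itself shows it is nondecreasing on $[0,\infty)$, so only $\psirep^*\bigl(\psirep(y)/y\bigr)\le\psirep(y)$ needs to be checked; this is then done by splitting the test point $y'$ in the supremum into the three regimes $y'\ge y$, $y'\le 0$ and $y'\in(0,y)$. You instead bound $\psirep^*(x)$ uniformly for all $x\in[0,\psirep(y)/y]$ at once, by inserting the affine minorant $\psirep(z)\ge k\abs{z}-\psirep(y)$ (the consequence of convexity recorded after Assumption \ref{item:9}, which you also correctly rederive from Lemma \ref{diffquot}) and observing $xz\le x\abs{z}\le k\abs{z}$. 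The two case analyses are close cousins --- your two-regime derivation of the affine bound mirrors the paper's three cases --- but your argument applies Lemma \ref{diffquot} only to $\psirep$ and never to the conjugate (thereby sidestepping the small issue that $\psirep^*$ may take the value $+\infty$, whereas Lemma \ref{diffquot} is stated for finite-valued functions), at the price of carrying the parameter $x$ through the estimate. Your version is complete as written.
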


\begin{proof}
  By Remark \ref{facts-conj-rec}, the last part of Lemma \ref{diffquot} and
  the nonnegativity of $\psirep^*$, it is enough to show
  \begin{equation}\label{eq:52}
    \psirep^*\left(\frac{\psirep(y)}{y}\right) \leq \psirep(y).
  \end{equation}
  To verify \eqref{eq:52}, we distinguish three cases for $y'\in\R$. For
  $y'\geq y$ we have by Lemma \ref{diffquot}
  \begin{displaymath}
  \frac{\psirep(y)}{y} y' - \psirep(y') =
    y'\left(\frac{\psirep(y)}{y} - \frac{\psirep(y')}{y'}\right) \leq 0,
  \end{displaymath}
  for $y'\leq0$ we have by the nonnegativity of $\psi$
  \begin{displaymath}
\frac{\psirep(y)}{y} y' - \psirep(y')
    \leq 0,
  \end{displaymath}
  and for $y'\in (0,y)$ we have
  \begin{displaymath}
\frac{\psirep(y)}{y} y' - \psirep(y') \leq
    \frac{\psirep(y)}{y} y = \psirep(y),
  \end{displaymath}
  which yields the claim.
\end{proof}

\begin{lem}\label{Lem:defs_of_recession}
  Let $\psirep$ satisfy Assumptions \ref{assumptions}. For
  $K = dom(\psirep^*) := \{x\in\R: \psirep^*(x)<\infty\}$ we have
\begin{displaymath}
\sup K = \lim_{t\to\infty}\frac{\psirep(t)}{t}\quad \text{and} \quad
\sup (-K) = \lim_{t\to\infty}\frac{\psirep(-t)}{t}.
\end{displaymath}
\end{lem}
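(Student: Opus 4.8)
The plan is to prove the first identity $\sup K = \lim_{t\to\infty}\psirep(t)/t$ and then deduce the second from it. Indeed, since $\psirep$ is symmetric, Remark \ref{facts-conj-rec} gives that $\psirep^*$ is symmetric as well, so $K$ is symmetric and $\sup(-K) = \sup K$; moreover $\psirep(-t) = \psirep(t)$ shows the two limits coincide, so the second identity follows at once from the first. For the first identity, I would first record that by Lemma \ref{diffquot} the difference quotient $t \mapsto \psirep(t)/t$ is nondecreasing on $(0,\infty)$, so that the limit exists in $[0,\infty]$ and in fact
\[
  L := \lim_{t\to\infty}\frac{\psirep(t)}{t} = \sup_{t>0}\frac{\psirep(t)}{t}.
\]
The claim is then $\sup K = L$, which I would establish by two inequalities.

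For $\sup K \le L$, I would fix $x > L$ (there being nothing to prove when $L=\infty$) and show $\psirep^*(x) = +\infty$, whence $x\notin K$. Using only the competitors $y = t > 0$ in the supremum defining $\psirep^*$, one has
\[
  \psirep^*(x) \ge \sup_{t>0}\bigl(xt - \psirep(t)\bigr) = \sup_{t>0} t\Bigl(x - \tfrac{\psirep(t)}{t}\Bigr) \ge \sup_{t>0} t\,(x-L),
\]
and since $x - L > 0$ this last supremum is $+\infty$. Hence every $x\in K$ satisfies $x\le L$, giving $\sup K \le L$.

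For $\sup K \ge L$, I would show $[0,L)\subseteq K$, which suffices because $\sup[0,L) = L$ (the degenerate case $L=0$ being covered by $0\in K$, as $\psirep^*(0)=\sup_y(-\psirep(y))=0$). Fix $0\le x < L$. Since $\psirep(y)/y \to L > x$ and is nondecreasing, there is $y_0>0$ with $\psirep(y_0)/y_0 > x$, hence $\psirep(y)/y > x$ for all $y\ge y_0$. I would then bound $xy-\psirep(y)$ over three regions: for $y\le 0$ one has $xy-\psirep(y)\le 0$ (as $x\ge 0$ and $\psirep\ge 0$); for $0<y<y_0$ one has $xy-\psirep(y)\le xy_0$; and for $y\ge y_0$ one has $xy-\psirep(y) = y\bigl(x-\psirep(y)/y\bigr) < 0$. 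Taking the supremum over $y\in\R$ yields $\psirep^*(x)\le xy_0 < \infty$, so $x\in K$; in the superlinear case $L=\infty$ the same argument gives $[0,\infty)\subseteq K$ and hence $\sup K=\infty=L$.

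The region-wise estimates are routine; the only point requiring care is the large-$y$ regime, where the monotonicity of the difference quotient from Lemma \ref{diffquot} is exactly what turns the inequality ``$x$ lies below the asymptotic slope $L$'' into a sign condition $x - \psirep(y)/y < 0$ valid for \emph{all} large $y$. This monotonicity is the crux linking $\mathrm{dom}(\psirep^*)$ to the asymptotic slope of $\psirep$, and it simultaneously guarantees that the limit defining $L$ is an honest supremum, which is precisely what makes both inequalities go through.
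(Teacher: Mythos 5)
Your proof is correct, and its overall architecture matches the paper's: both reduce the second identity to the first by symmetry, both use the monotonicity of $t\mapsto\psirep(t)/t$ from Lemma \ref{diffquot} to identify the limit with $\sup_{t>0}\psirep(t)/t$, and your ``$\sup K\le L$'' step is the contrapositive of the paper's (the paper takes $x\in K$ and reads off $\psirep(t)/t\ge x-c_x/t$). The one place you genuinely diverge is the ``$\ge$'' direction: the paper exhibits the specific points $\psirep(t)/t$ as elements of $K$ by invoking Lemma \ref{boundconj} (whose proof is the same three-region split $y'\le 0$, $y'\in(0,y)$, $y'\ge y$ that you perform), and then takes the supremum over $t$; you instead show directly that the whole interval $[0,L)$ lies in $K$ by bounding $\psirep^*(x)\le xy_0$ for an arbitrary $x<L$. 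Your version is self-contained and slightly more general (it describes $\mathrm{dom}(\psirep^*)$ up to its endpoint, not just a cofinal family of points in it), at the cost of redoing an estimate the paper already has on hand; the paper's version is shorter given that Lemma \ref{boundconj} is needed elsewhere anyway. Your handling of the degenerate cases ($L=0$ via $0\in K$, and $L=\infty$ in the superlinear case) is careful and closes the small gaps that a naive ``$\sup[0,L)=L$'' claim would leave.
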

\begin{proof}
  We only prove the first statement, the second then becomes clear by
  symmetry. To this end, note first that the limit is actually a supremum, as $\frac{\psirep(t)}{t}$ is
  increasing (by \eqref{eq:79}). Let now $x\in K$, which means that
  $xt - \psirep(t) \leq c_x < \infty$ and thus
  $\frac{\psirep(t)}{t} \geq x - \frac{c_x}{t}$ for all $t\in[0,\infty)$, which
  yields ``$\leq$'' by letting $t\to\infty$.

  Conversely, we have $\frac{\psirep(t)}{t}\in K$ for $t>0, \psirep(t)>0$ by
  by Lemma \ref{boundconj}. As $\psirep^*(0) = 0$, this is true also if
  $\psirep(t)=0$, thereby proving ``$\geq$''. 
\end{proof}

\begin{Cor}\label{finf1}
  Let $\psirep$ satisfy Assumptions \ref{assumptions}. By Lemmas \ref{boundconj} and \ref{Lem:defs_of_recession}, we have that
  \begin{displaymath}
    \psirep_\infty(1) = \psirep_\infty(-1) \geq \frac{\psirep(y)}{y}
  \end{displaymath}
  for $y>0$ with $\psirep(y) > 0$.
\end{Cor}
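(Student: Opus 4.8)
The plan is to recognise this corollary as a direct splicing of the two immediately preceding lemmas, so the proof should be short. First I would dispense with the equality $\psirep_\infty(1) = \psirep_\infty(-1)$, which is immediate from the symmetry assumption \ref{item:11}: since $\psirep(t) = \psirep(-t)$, the defining limits $\lim_{t\to\infty}\psirep(t)/t$ and $\lim_{t\to\infty}\psirep(-t)/t$ coincide (this is also recorded in Remark \ref{facts-conj-rec}, where it is noted that $\psirep_\infty$ inherits the symmetry of $\psirep$). Hence it suffices to establish the inequality $\psirep_\infty(1)\geq \psirep(y)/y$.

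Next I would rewrite $\psirep_\infty(1)$ in terms of the effective domain of the conjugate. By the definition of the recession function, $\psirep_\infty(1) = \lim_{t\to\infty}\psirep(t)/t$, and Lemma \ref{Lem:defs_of_recession} identifies this limit with $\sup K$, where $K = \mathrm{dom}(\psirep^*) = \{x\in\R : \psirep^*(x)<\infty\}$. Thus the target inequality becomes the purely one-dimensional statement $\sup K \geq \psirep(y)/y$.

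The final and only substantive step is to exhibit the point $\psirep(y)/y$ as a member of $K$. This is precisely what Lemma \ref{boundconj} delivers: under the hypothesis $\psirep(y)>0$ it gives $\psirep^*(x)\leq \psirep(y)$ for every $x\in[0,\psirep(y)/y]$, and in particular $\psirep^*(\psirep(y)/y)\leq \psirep(y)<\infty$. Therefore $\psirep(y)/y\in K$, whence $\sup K \geq \psirep(y)/y$, and combining with the previous paragraph yields $\psirep_\infty(1)=\sup K\geq \psirep(y)/y$, as claimed.

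I do not anticipate any genuine obstacle here, since all the analytic work has already been carried out in Lemmas \ref{boundconj} and \ref{Lem:defs_of_recession}; the corollary is essentially bookkeeping. The one point meriting care is the chain of identifications $\psirep_\infty(1)=\lim_{t\to\infty}\psirep(t)/t=\sup K$, ensuring that the quantity appearing in the recession function is exactly the supremum over the conjugate's domain, and then observing that finiteness of $\psirep^*$ at a point is exactly membership in $K$.
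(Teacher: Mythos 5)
Your proposal is correct and is exactly the argument the paper intends: the corollary is stated as a direct consequence of Lemmas \ref{boundconj} and \ref{Lem:defs_of_recession}, namely that Lemma \ref{boundconj} places $\psirep(y)/y$ in $\mathrm{dom}(\psirep^*)=K$ while Lemma \ref{Lem:defs_of_recession} identifies $\sup K$ with $\psirep_\infty(1)$, with the equality $\psirep_\infty(1)=\psirep_\infty(-1)$ coming from symmetry. No gaps; your bookkeeping matches the paper's.
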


\begin{lem} \label{char-rec-fn} Let $\psirep$ satisfy Assumptions
  \ref{assumptions}. For the convex conjugate of the recession function, we
  have
  \begin{displaymath}
    \psirep_\infty^*(x):= (\psirep_\infty)^*(x) = \chi_{[-\psirep_\infty(1),\psirep_\infty(1)]}(x)
  \end{displaymath}
  for $x\in\R$, where for an Interval $I$ we have written
  \begin{displaymath}
    \chi_I(x) =
    \begin{cases}
      0, &\text{ if }x\in I\\
      +\infty, &\text{ else}.
    \end{cases}
  \end{displaymath}
\end{lem}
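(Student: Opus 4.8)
The plan is to exploit the special structure of the recession function. By Remark \ref{facts-conj-rec}, $\psirep_\infty$ is convex, positively homogeneous and (since $\psirep$ is symmetric) symmetric. First I would combine these two properties into the explicit closed form $\psirep_\infty(y) = \psirep_\infty(1)\,\abs{y}$ for all $y\in\R$: positive homogeneity gives $\psirep_\infty(y) = y\,\psirep_\infty(1)$ for $y>0$, symmetry extends this to $y<0$, and $\psirep_\infty(0)=0$ covers the remaining point. Note that $\psirep_\infty(1) = \lim_{t\to\infty}\psirep(t)/t$ is a well-defined element of $[0,\infty]$, being an increasing supremum by Lemma \ref{diffquot}.

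Writing $a := \psirep_\infty(1)$, the convex conjugate becomes $\psirep_\infty^*(x) = \sup_{y\in\R}\bigl(xy - a\abs{y}\bigr)$, which I would evaluate by a direct case distinction on the sign of $y$. For $y>0$ the bracket equals $(x-a)y$, for $y<0$ it equals $(x+a)y$, and at $y=0$ it is $0$. Hence, if $\abs{x}\leq a$, both slopes $x-a$ and $x+a$ have the ``right'' sign to make every term nonpositive, so the supremum is attained at $y=0$ and equals $0$. If instead $x>a$ (respectively $x<-a$), then letting $y\to+\infty$ (respectively $y\to-\infty$) drives the bracket to $+\infty$. This yields $\psirep_\infty^*(x)=0$ for $\abs{x}\leq a$ and $\psirep_\infty^*(x)=+\infty$ otherwise, which is exactly $\chi_{[-\psirep_\infty(1),\psirep_\infty(1)]}(x)$.

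There is essentially no obstacle here; the only point deserving a word of care is the degenerate case $\psirep_\infty(1)=+\infty$ (the superlinear regime), where the stated interval is understood to be all of $\R$. In that case $\psirep_\infty(y)=+\infty$ for every $y\neq 0$, so the bracket equals $-\infty$ for all $y\neq 0$ and the supremum is $0$ for every $x$, consistent with $\chi_\R\equiv 0$. In the sublinear situation relevant to this lemma, the growth bound \eqref{eq:20} ensures $\psirep_\infty(1)<\infty$, so this subtlety does not even arise and the interval is genuinely bounded.
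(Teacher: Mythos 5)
Your proposal is correct and follows essentially the same route as the paper: reduce $\psirep_\infty$ to the absolutely homogeneous form $\psirep_\infty(1)\abs{\cdot}$ via positive homogeneity plus symmetry, then read off the conjugate, treating the superlinear case ($\psirep_\infty=\chi_{\{0\}}$, hence $\psirep_\infty^*\equiv 0$) separately. The only difference is that you carry out the elementary supremum computation for $\sup_y(xy-a\abs{y})$ explicitly where the paper simply invokes the definition of the convex conjugate.
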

\begin{proof}
  In the superlinear case, \ie \eqref{eq:67} is satisfied, we have
  $\psirep_\infty = \chi_{\{0\}}$ and thus $\psirep^*_\infty \equiv 0$, as required. In
  the sublinear case, we first note that $\psirep_\infty$ is, by definition,
  positively homogeneous, which by symmetry amounts to absolute
  homogeneity. Thus
\begin{displaymath}
\psirep_\infty(x) =
\psirep_\infty(1)\abs{x},
\end{displaymath}
where $\psirep_\infty(1) > 0$ by Corollary \ref{finf1}, which allows to conclude by 
the definition of the convex conjugate.
\end{proof}

\bibliography{mybooks.bib}
\bibliographystyle{abbrv}

\begin{flushleft}
\small \normalfont
\textsc{Marius Neuß\\
Max--Planck--Institut f\"ur Mathematik in den Naturwissenschaften\\
04103 Leipzig, Germany}\\
\texttt{\textbf{marius.neuss@mis.mpg.de}}
\end{flushleft}

\end{document}